\newtheorem{Thm}{Theorem}
\newtheorem{Con}{Conjecture}
\newtheorem{Lem}{Lemma}
\newtheorem{Def}{Definition}
\newtheorem{cor}{Corollary}
\newtheorem{Prop}{Proposition}
\theoremstyle{remark}
\renewcommand\d{{\mathrm d}}
\DeclareMathOperator{\ord}{ord}
\begin{document}

\title
{Forbidden integer ratios of consecutive power sums}
\author{Ioulia N. Baoulina and Pieter Moree}
\address{Department of Mathematics, Moscow State Pedagogical University, Krasnoprudnaya str. 14, Moscow 107140, Russia}
\email{jbaulina@mail.ru}
\address{Max-Planck-Institut f\"ur Mathematik, Vivatsgasse 7, D-53111 Bonn, Germany}
\email{moree@mpim-bonn.mpg.de}
\date{\today}
\dedicatory{To the memory of Prof. Wolfgang Schwarz }
\begin{abstract}
Let $S_k(m):=1^k+2^k+\cdots+(m-1)^k$ denote a power sum. In 2011 Bernd 
Kellner formulated
the conjecture that for $m\ge 4$ the ratio
$S_k(m+1)/S_k(m)$ of two consecutive power sums is never an
integer. We will develop some techniques that allow one to exclude many
integers $\rho$ as a ratio and combine them to exclude the integers
$3\le \rho\le 1501$ and, assuming a 
conjecture on irregular primes to be true, a set of density $1$ of ratios $\rho$. To exclude
a ratio $\rho$ one has to show that
the Erd\H{o}s-Moser type equation $(\rho-1)S_k(m)=m^k$ has no non-trivial solutions.
\end{abstract}
\subjclass[2000]{11D61, 11A07}

\maketitle

\section{Introduction}
\noindent Power sums have fascinated mathematicians for centuries. In this paper we consider
some Diophantine equations involving power sums, of which the Erd\H{o}s-Moser equation
\begin{equation}
\label{EME}
1^k+2^k+\cdots+(m-2)^k+(m-1)^k=m^k
\end{equation}
is typical and the most famous one. This equation has the obvious solution $(m,k)=(3,1)$ and
conjecturally no other solutions exist (this conjecture was formulated
around 1950 by Paul Erd\H{o}s in a letter to Leo Moser). 
Leo Moser \cite{Moser}, using only elementary number theory, established the 
following result. 
\begin{Thm}[{Leo Moser, 1953}]
\label{Leo}
If $(m,k)$ is a solution of \eqref{EME} with $k\ge 2$, then
$m>10^{10^6}$.
\end{Thm}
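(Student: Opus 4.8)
The plan is to turn the single equation \eqref{EME}, written as $S_k(m)=m^k$, into a large family of congruence conditions and then show that together they force $m$ to be astronomically large. I would begin with elementary structure. If $k$ is odd, Faulhaber's theorem expresses $S_k(m)$ as a polynomial with zero constant term in the triangular number $S_1(m)=\tfrac{m(m-1)}{2}$, so $\tfrac{m(m-1)}{2}\mid S_k(m)=m^k$; since $\gcd(m-1,m)=1$ this gives $m-1\mid 2$, contradicting $m\ge 4$. Hence $k$ is even. Next I would exploit the reflection $j\mapsto m-j$ together with the binomial expansion of $(m-j)^k$. Modulo $m$ this only recovers $2S_k(m)\equiv\bigl(1+(-1)^k\bigr)S_k(m)$, but pushing the expansion to second order and using $2S_k(m)=2m^k\equiv 0\pmod{m^2}$ yields the sharper relation
\begin{equation*}
k\,m\,S_{k-1}(m)\equiv 0 \pmod{m^2},\qquad\text{i.e.}\qquad k\,S_{k-1}(m)\equiv 0\pmod{m},
\end{equation*}
and analogous higher relations that constrain $m$ and $k$ jointly.

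The engine of the argument is the classical evaluation of a power sum modulo a prime. Writing $N=\lfloor N/p\rfloor\,p+r$ with $0\le r<p$, and using that a complete residue block contributes $\sum_{j=1}^{p-1}j^k\equiv -1\pmod p$ when $(p-1)\mid k$ and $\equiv 0$ otherwise, one gets
\begin{equation*}
\sum_{j=1}^{N} j^k\equiv -\Bigl\lfloor \tfrac{N}{p}\Bigr\rfloor\,[\,(p-1)\mid k\,]+\sum_{j=1}^{r} j^k \pmod{p},
\end{equation*}
where $[\,(p-1)\mid k\,]$ is $1$ if $(p-1)\mid k$ and $0$ otherwise. I would feed $S_k(m)=m^k$ into this formula modulo the primes dividing each of $m$, $m-1$, $m+1$, and—iterating the idea after suitable shifts of the summation range—modulo the prime divisors of further auxiliary quantities such as $2m\pm 1$. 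For a prime $p\mid m$, for example, the formula collapses to $-(m/p)\,[\,(p-1)\mid k\,]\equiv 0\pmod p$, so either $(p-1)\nmid k$ or $p^2\mid m$; similar dichotomies hold for the other moduli. Running through all of them decides, prime by prime, whether $(p-1)\mid k$, and forces $m-1$, $m+1$ and the auxiliary quantities to be built almost entirely out of primes $p$ with $(p-1)\mid k$, the exceptional factors being squarefull.

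The final step is a size estimate. By von Staudt–Clausen the primes with $(p-1)\mid k$ are exactly the denominators occurring in the Bernoulli number $B_k$, and the congruences above assemble into a near-integrality statement for $\sum_{(p-1)\mid k}1/p$ (plus the small corrections $1/(m-1)$ and $1/m$). Such a relation can hold only when there are very many primes $p$ with $(p-1)\mid k$; each contributes a distinct divisor $p-1$ of $k$, so $k$ is divisible by a large highly composite integer, and the accumulated product of the forced prime factors of $m-1$ and $m+1$ is then enormous, eventually exceeding $10^{10^6}$.

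I expect the quantitative last step to be the main obstacle. Converting the qualitative statement ``$(p-1)\mid k$ for many primes $p$'' into the explicit bound $m>10^{10^6}$ requires choosing the auxiliary moduli efficiently and estimating the accumulated product tightly. A second source of difficulty, purely technical but unavoidable, is that one must lift each congruence from the prime $p$ to the exact prime power dividing $m\pm 1$ rather than working modulo $p$ alone; this is where most of the elementary but delicate bookkeeping resides.
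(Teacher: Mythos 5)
The paper itself does not reprove Theorem~\ref{Leo}: it cites Moser~\cite{Moser} and points to \cite{tophat} for the shortest known proof, but it does redeploy exactly this method in Section~\ref{sec:lower} to prove Theorem~\ref{thm:t6}. Your outline is that classical argument: evenness of $k$ via the Carlitz--von Staudt congruence, evaluation of power sums modulo the prime divisors of $m$, $m\pm1$ and $2m\pm1$, the resulting squarefreeness and the condition $(p-1)\mid k$ on those primes, and a final count of prime reciprocals. So in strategy you are aligned with the source, and the first two thirds of your sketch are sound (the Faulhaber detour for odd $k$ is an unnecessarily delicate way to get what Lemma~\ref{lem:staudt} gives directly, but it leads to the correct conclusion $m-1\mid 2$).

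The genuine soft spot is the final quantitative step, and the issue is more than bookkeeping. The statements you actually obtain are the exact integrality of each of the four quantities $\sum_{p\mid n}\frac1p+\frac{c_n}{n}$ for $n\in\{m-1,\,m+1,\,2m-1,\,2m+1\}$ with small explicit $c_n$ (the analogues of \eqref{eq14}--\eqref{eq17}); the sums run over the prime divisors of $n$, not over all primes with $(p-1)\mid k$. Each quantity is a positive integer, hence at least $1$, and one of them is forced to exceed $1$; since no prime $>3$ divides more than one of the four numbers, adding them and discarding the doubly counted contributions of $2$ and $3$ yields $\sum_{p\mid M}\frac1p>3.16$ with $M=(m^2-1)(4m^2-1)/12<m^4$ squarefree. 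Because $\sum_{p\le x}\frac1p$ grows only like $\log\log x$, this forces $M$ to exceed the product of the first $4990906$ primes, whence $m>10^{10^6}$. Your alternative emphasis --- ``many primes with $(p-1)\mid k$, hence $k$ divisible by a large highly composite integer'' --- is a true consequence but does not deliver the stated bound: the best known divisibility results of that type give only $k>10^{427}$, and $m$ is comparable to $k$ by Lemma~\ref{lem:l13}. The bound $10^{10^6}$ comes specifically from the reciprocal-sum inequality applied to the squarefree number $M<m^4$, so that inequality, not the divisibility of $k$, is the step you must carry out.
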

\noindent For the shortest proof of this result presently known, we refer to
Moree \cite{tophat}.

Using very different techniques, namely continued fractions and a many decimal computation of
$\log 2$, Gallot et al.~\cite{GMZ} established the current world record:
\begin{Thm}[{Gallot et al., 2011}]
\label{YVES}
If an integer pair $(m,k)$ with $k\ge 2$ satisfies \eqref{EME}, then
$$m > 2.7139 \cdot 10^{\,1\,667\,658\,416}.$$
\end{Thm}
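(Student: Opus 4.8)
The plan is to sharpen the elementary analysis behind Theorem~\ref{Leo} into a Diophantine-approximation statement about $\log 2$, and then to eliminate every candidate up to the stated bound by a direct computation of the continued fraction of $\log 2$. First I would recast \eqref{EME} by reversing the order of summation: writing $i=m-j$ and dividing by $m^k$, a solution satisfies
$$\sum_{i=1}^{m-1}\Bigl(1-\frac{i}{m}\Bigr)^k=1.$$
Since $m$ is astronomically large by Theorem~\ref{Leo}, the terms decay like $\exp(-ik/m)$ and the sum behaves like a geometric series of ratio $e^{-k/m}$; matching it to $1$ forces $e^{-k/m}$ to be extremely close to $\tfrac12$, that is, $k/m$ is a good approximation to $\log 2$. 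The first task is to make this precise with a fully explicit error term, carrying enough terms of the expansion of $(1-i/m)^k=\exp\bigl(k\log(1-i/m)\bigr)$ to control both the tail and the curvature corrections.

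The naive expansion yields only $|k/m-\log 2|=O(1/m)$, which is far too weak, so the decisive step is to upgrade it using congruences of Moser type. Here I would invoke the von Staudt--Clausen theorem and the Bernoulli-polynomial formula for $S_k(m)$ to extract $2$-adic and mod-$p$ information from the equation: a nontrivial solution must satisfy a rigid system of congruences (for instance $k$ even, $m\equiv 3\pmod 8$, and $(p-1)\mid k$ for every prime $p$ dividing $m$, $m\pm1$, or $2m\pm1$, forcing these numbers to be squarefree). Combining this arithmetic rigidity with the analytic estimate should pin $k/m$ (equivalently, $m/k$ as an approximation to $1/\log 2$) down to an error below $1/(2m^2)$. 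By Legendre's theorem such an approximation is automatically a convergent $p_n/q_n$ of the continued fraction of $\log 2$, with $q_n$ of size comparable to $m$.

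The final step is computational, and this is where the real work lies. Each convergent of $\log 2$ furnishes at most one candidate value of $m$, and the remaining congruences together with local (small-prime) obstructions to the required squarefreeness can be tested cheaply modulo a fixed product of small primes. I would therefore compute $\log 2$ to the necessary precision---of the order of $10^{9}$ decimal digits, since reliably producing convergents with denominator up to $2.7\cdot 10^{\,1\,667\,658\,416}$ requires roughly twice that many correct digits---unroll its continued fraction, and verify that not a single convergent below the threshold survives this arithmetic sieve. The main obstacle is twofold: on the theoretical side, making \emph{every} constant in the expansion and in Moser's congruences explicit, so that the bound $<1/(2m^2)$ is rigorous rather than heuristic; and on the computational side, the sheer scale of generating $\log 2$ and its partial quotients to billions of digits with certified accuracy. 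Once both are in place, the absence of any surviving candidate below the bound yields the theorem.
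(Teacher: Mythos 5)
This theorem is not proved in the paper at all: it is imported verbatim from Gallot, Moree and Zudilin \cite{GMZ}, and the paper records only that its proof uses ``continued fractions and a many decimal computation of $\log 2$.'' So the comparison can only be with the actual argument of \cite{GMZ}. Your overall skeleton --- rewrite the equation as $\sum_{i=1}^{m-1}(1-i/m)^k=1$, deduce that $k/m$ is anchored near $\log 2$, upgrade this to an approximation of quality better than $1/(2q^2)$ so that Legendre's theorem produces a convergent, and then eliminate all convergents up to denominator $\approx 10^{1.67\cdot 10^{9}}$ by computing $\log 2$ to roughly $3\cdot 10^{9}$ decimal digits --- is indeed the strategy of \cite{GMZ}, and your estimate of the required precision is right.

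The genuine gap is in the step you yourself call decisive. Moser-type congruences (von Staudt--Clausen, $(p-1)\mid k$ for primes $p$ dividing $m\pm1$ or $2m\pm1$, squarefreeness, $m\equiv 3\pmod 8$, $N\mid k$) are \emph{discrete} divisibility constraints; they carry no metric information about the real number $\lvert k/m-\log 2\rvert$ and cannot convert an $O(1/m)$ estimate into one of size $o(1/m^{2})$. What actually closes this gap in \cite{GMZ} is purely analytic: carrying the expansion of $\sum_i \exp\bigl(k\log(1-i/m)\bigr)$ one order further shows that the first-order defect $\asymp 1/m$ is removed not by arithmetic but by \emph{recentering the fraction} --- it is $2k/(2m-3)$, not $k/m$, that approximates $\log 2$, and with fully explicit constants the error is $O(m^{-3})$, comfortably below the Legendre threshold. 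Your final computational step also needs repair: a convergent $p_n/q_n$ does not furnish ``at most one candidate $m$'' (one must allow $2m-3=\lambda q_n$ for every small multiplier $\lambda$), and the elimination in \cite{GMZ} does not proceed by sieving single candidates modulo small primes but by observing that the $O(m^{-3})$ quality of approximation forces the next partial quotient $a_{n+1}$ to be of size comparable to $q_n$ itself, i.e.\ astronomically large, whereas every partial quotient in the computed range is small. As written, your middle step would fail, and without it no solution is ever tied to the continued fraction of $\log 2$ in the first place.
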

The bound $m>10^{10^{10}}$ seems feasible, but requires somewhat better computer
resources than the authors of \cite{GMZ} had at their disposal.

Let $S_k(m):=\sum_{j=1}^{m-1}j^k$ be the sum of the first $m-1$ consecutive $k$th powers.
In this notation we can rewrite (\ref{EME}) as
\begin{equation}
\label{M}
S_k(m)=m^k.
\end{equation}

In the literature also the
{\it generalized Erd\H{o}s-Moser} conjecture is considered. 
The strongest result to date is due
to the second author~\cite{magics} who proved the following.
\begin{Thm}
\label{provo}
For a fixed positive integer $a$, the equation  $S_k(m)=am^k$ has no integer solutions $(m,k)$ with
$$k\ge 2,\quad m<\max \bigl(10^{9\cdot 10^6},a\cdot 10^{28}\bigr).$$
\end{Thm}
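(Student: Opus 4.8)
The plan is to prove Theorem~\ref{provo} by reducing the generalized equation $S_k(m)=am^k$ to the structural constraints already available for the classical Erd\H{o}s-Moser equation, and then leveraging Moser's method (Theorem~\ref{Leo}) in a parametrized form. The key elementary engine is the analysis of $S_k(m) \pmod{m}$ and $\pmod{m^2}$ via the von Staudt--Clausen theorem and the classical expansion of $S_k(m)$ in terms of Bernoulli numbers. Writing $S_k(m) = \sum_{j=1}^{m-1} j^k$, one has for each prime $p$ a precise description of $S_k(m)\bmod p$ depending on whether $(p-1)\mid k$; this is what forces strong divisibility conditions on $m$ in the $a=1$ case, and I would first verify that these local conditions survive the introduction of the factor $a$ with only mild modification.

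First I would establish the basic congruence machinery: for a prime $p\mid m$, evaluate $S_k(m)\bmod p$ by pairing residues, obtaining that $S_k(m)\equiv -1\pmod p$ when $(p-1)\mid k$ and $S_k(m)\equiv 0$ otherwise. Combined with the hypothesis $S_k(m)=am^k$, which forces $S_k(m)\equiv 0\pmod{p}$ for every $p\mid m$, this should constrain $k$ relative to the prime factorization of $m$, in particular ruling out $(p-1)\mid k$ for primes dividing $m$ unless $p\mid a$ in a controlled way. Next I would push to the modulus $m^2$: the leading Bernoulli-number terms give $S_k(m)\equiv -\tfrac{1}{2}km^2/m \cdot(\text{lower order}) \pmod{m^2}$, and the equation $am^k\equiv 0\pmod{m^2}$ (valid once $k\ge 2$) yields a congruence of the shape $k m \equiv 0$ modulo a suitable factor, pinning down the $2$-adic and $3$-adic structure of $m$. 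These are exactly the ingredients that, in Moser's original argument, cascade into a product $\prod_{p} p$ over a large set of small primes and hence a huge lower bound for $m$.

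The second main step is quantitative: I would show that the set of primes $p$ for which $(p-1)\mid k$ and which are therefore forced to divide the relevant quantity grows fast enough that the induced lower bound for $m$ exceeds $\max\bigl(10^{9\cdot 10^6}, a\cdot 10^{28}\bigr)$. Here the dependence on $a$ enters only through a bounded number of exceptional primes dividing $a$, so the bound splits into an $a$-independent Moser-type part (accounting for the $10^{9\cdot 10^6}$) and a term linear in $a$ (accounting for the $a\cdot 10^{28}$, which handles the small-$m$ regime where the combinatorial sieve has not yet accumulated enough primes). I would treat the ranges $m < a\cdot 10^{28}$ and $m \ge a\cdot 10^{28}$ by slightly different balancings of the same inequalities, invoking explicit prime-counting estimates of Chebyshev type to bound $\prod p$ from below.

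The hard part will be controlling the interaction between the factor $a$ and the congruence conditions: in the classical case one knows $m$ is squarefree-like with a rigid residue structure, but the presence of $a$ relaxes the condition $S_k(m)\equiv 0\pmod{p}$ to permit primes $p\mid a$, so the sieve can lose those primes from the product and the lower bound could in principle degrade. The resolution I anticipate is that $a$ has only $O(\log a)$ prime factors, so at most logarithmically many primes are lost, and their total contribution to the product is itself at most $a$; reinserting this via the term $a\cdot 10^{28}$ is precisely why the bound in Theorem~\ref{provo} carries an explicit linear-in-$a$ factor rather than being uniform. Making this loss quantitatively rigorous, and checking that the remaining primes still force $m$ past the threshold, is where the bulk of the careful estimation lives; the rest is a faithful parametrization of Moser's elementary computation.
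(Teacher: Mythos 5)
First, a point of orientation: the paper does not prove Theorem~\ref{provo} at all --- it is quoted from \cite{magics}, so there is no in-paper proof to match. The closest material here is Section~\ref{sec:lower} (the Moser-style argument of Theorem~\ref{thm:t6}) and Lemma~\ref{lem:l13}, both written for the Kellner equation $aS_k(m)=m^k$ rather than for $S_k(m)=am^k$. Your sketch of the $a$-independent part (congruences modulo $m\pm1$ and $2m\pm1$ via Carlitz--von Staudt, forcing $\sum_{p\mid M}1/p$ past a threshold and hence $M$ past a product of many primes) is the right engine and is essentially what \cite{magics} and Section~\ref{sec:lower} do, though note that for $S_k(m)=am^k$ with $k\ge 2$ the condition $S_k(m)\equiv 0\pmod{m^2}$ is \emph{not} relaxed by $a$ (the factor $a$ sits on the $m^k$ side); $a$ enters the sieve only as an additive rational perturbation of the form $\tfrac{a-1}{m-1}$, $\tfrac{2a-1}{2m-1}$, etc.\ in the ``sum of reciprocals is an integer'' congruences.

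The genuine gap is in your explanation of the $a\cdot 10^{28}$ term, which is the only part of the statement that actually depends on $a$. Your proposed mechanism --- that at most $O(\log a)$ primes are ``lost'' from the sieve product because they divide $a$, and that their contribution, being at most $a$, can be ``reinserted'' as the factor $a$ in the lower bound --- cannot work: losing primes from a product that bounds $m$ from below can only \emph{weaken} the bound, whereas $a\cdot 10^{28}$ is a bound that gets \emph{stronger} with $a$. The standard (and essentially only elementary) source of a linear-in-$a$ lower bound is the integral comparison $S_k(m)<\int_0^m x^k\,\d x=m^{k+1}/(k+1)$, which applied to $am^k=S_k(m)$ gives $m>a(k+1)$ directly; this is precisely the analogue of Lemma~\ref{lem:l13}. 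One then needs an $a$-uniform lower bound $k>10^{28}$, which follows from the sieve step: every prime $p$ dividing the squarefree number $M\mid(m^2-1)(4m^2-1)/12$ satisfies $(p-1)\mid k$, so $k$ is divisible by the least common multiple of many values $p-1$, which vastly exceeds $10^{28}$ once $M$ has more than a handful of prime factors. Without identifying the inequality $m>a(k+1)$ and the divisibility-of-$k$ step, your plan has no route to the $a\cdot 10^{28}$ bound; the rest of the sketch, while plausible in outline, would also need the explicit threshold computation (the $3\tfrac16$ barrier and the $4990906$-prime count of \cite{BJM}) to reach $10^{9\cdot 10^6}$ rather than Moser's original $10^{10^6}$.
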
 
Interestingly, the method of Gallot et al. allows one only to deal with
a specific value of $a$, for a general $a$ only the elementary method of Moser is available.

Kellner \cite{Kellner3} conjectured that if $k$ and $m$ are positive integers with $m\ge 3$, the
ratio $S_k(m+1)/S_k(m)$ is an integer iff $(m,k)\in \{(3,1),(3,3)\}$. Noting that
$S_k(m+1)=S_k(m)+m^k,$ one observes that this conjecture is equivalent with the
following one.
\begin{Con}[Kellner-Erd\H{o}s-Moser]
\label{kellie}  Let $m\ge 3$. 
We have  
\begin{equation}
\label{KEME}
aS_k(m)=m^k
\end{equation}
iff $(a,m,k)\in \{(1,3,1),(3,3,3)\}$.
\end{Con}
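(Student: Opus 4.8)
The plan is to prove the nontrivial (``only if'') direction, the ``if'' direction being the two immediate verifications $S_1(3)=3$ and $3\,S_3(3)=27$. So suppose $m\ge 3$ and $a,k\ge 1$ satisfy $aS_k(m)=m^k$, and aim to force $(a,m,k)\in\{(1,3,1),(3,3,3)\}$. First I would dispose of the boundary case $m=3$ completely and elementarily: there the equation reads $a(1+2^k)=3^k$, so $1+2^k$ must be a power of $3$, say $1+2^k=3^j$ with $a=3^{k-j}$. For $k\ge 3$ one has $3^j\equiv 1\pmod 8$, whence $j=2i$ is even and $2^k=(3^i-1)(3^i+1)$ writes a power of $2$ as a product of two factors differing by $2$; this forces $3^i-1=2$, i.e.\ $i=1$, $j=2$, $k=3$, $a=3$. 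Together with $k=1$ (giving $a=1$) and $k=2$ (where $1+2^2=5$ is not a power of $3$) this yields exactly the two listed triples. The entire content of the conjecture therefore lies in showing that $aS_k(m)=m^k$ has no solution once $m\ge 4$.

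For $m\ge 4$ I would first clear small exponents: the explicit Faulhaber formulas for $S_1,S_2,S_3,\dots$ show by direct inspection that none of them produces a solution with $m\ge 4$, so one may assume $k$ large. Setting $\rho:=a+1\ge 2$, which is precisely the forbidden integer ratio $S_k(m+1)/S_k(m)$, I would bring in the Bernoulli machinery: writing $S_k(m)=(B_{k+1}(m)-B_{k+1})/(k+1)$ and invoking von Staudt--Clausen, one reads off strong congruence conditions on $k$ from the equation modulo small primes $p$, using that $\sum_{j=1}^{p-1}j^k\equiv -1\pmod p$ exactly when $(p-1)\mid k$. As in Moser's argument behind Theorem~\ref{Leo} and its generalization behind Theorem~\ref{provo}, these conditions propagate: a solution with $m\ge 4$ would satisfy $(p-1)\mid k$ for all primes $p$ below a bound growing with $m$, so $k$ is divisible by the least common multiple of many $p-1$ and is astronomically large, which pushes $m$ past the thresholds of Theorems~\ref{Leo} and~\ref{YVES}. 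The $2$-adic valuation of $aS_k(m)=m^k$ simultaneously pins down the parity of $k$ and the residue of $m$, so the exclusion techniques of this paper then eliminate $\rho$ one value at a time, each surviving $\rho$ being an instance of the generalized equation $(\rho-1)S_k(m)=m^k$ handled by the method of Theorem~\ref{provo}.

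The main obstacle --- and the reason the statement is phrased as a conjecture --- is the passage from ``no solutions in a prescribed range'' to ``no solutions whatsoever'', uniformly in $a$. Two difficulties compound here. First, $a=m^k/S_k(m)$ is unbounded: since $S_k(m)/(m-1)^k\to 1$ as $k\to\infty$ with $m$ fixed, one has $a\approx (m/(m-1))^k$ when $k$ is large compared with $m$, so there are infinitely many ratios $\rho$ to exclude and no finite reduction to Theorem~\ref{provo} is available; the congruence sieve and the exclusion step must be made uniform in $\rho$, which is exactly the point at which the present methods reach only a density-$1$ set of $\rho$, and then only conditionally on a standard conjecture about irregular primes. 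Second, the slice $a=1$ is literally the Erd\H{o}s-Moser equation~\eqref{M}, so ruling out $m\ge 4$ there is the original Erd\H{o}s-Moser conjecture, for which the state of the art (Theorem~\ref{YVES}) yields only the lower bound $m>2.7139\cdot 10^{1\,667\,658\,416}$. A complete proof of the stated characterization would in particular settle Erd\H{o}s-Moser unconditionally; the realistic route is to combine a fully uniform version of the exclusion machinery developed here with the continued-fraction and high-precision $\log 2$ technology of Gallot et al., pushed from lower bounds to outright impossibility.
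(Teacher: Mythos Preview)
This statement is a \emph{conjecture}, not a theorem: the paper does not prove it, and as you yourself observe in your final paragraph, a full proof would in particular settle the Erd\H{o}s--Moser conjecture, which is open. There is therefore no ``paper's own proof'' to compare against. What the paper does prove is the reduction at the start of Section~\ref{sec:kell}: disposing of $m=2$, $m=3$, and odd $k$, so that Conjecture~\ref{kellie} is equivalent to Conjecture~\ref{kellie2} (that the set ${\EuScript A}$ is empty). Your treatment of $m=3$ is correct and parallels the paper's; where the paper invokes Levi ben Gerson on consecutive powers of $2$ and $3$, your factorization $2^k=(3^i-1)(3^i+1)$ is a clean self-contained alternative reaching the same conclusion.

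Your middle paragraph, by contrast, is a loose narrative rather than a proof sketch, and a couple of its assertions are inaccurate. The claim that a solution with $m\ge 4$ forces $(p-1)\mid k$ ``for all primes $p$ below a bound growing with $m$'' overstates what Moser's method yields: one gets $(p-1)\mid k$ only for primes $p$ dividing the specific auxiliary integers $m-1$, $2m-1$, $(m+1)/a_1$, $(2m+1)/a_2$ (Lemmas~\ref{lem:l11}--\ref{lem:l12}, Theorem~\ref{thm:t6}), not for all small primes. And Theorem~\ref{provo} supplies only lower bounds on $m$, never impossibility, so ``handled by the method of Theorem~\ref{provo}'' does not eliminate a single ratio $\rho$; the exclusions in Theorem~\ref{main} come instead from the irregular-prime criterion (Corollary~\ref{cor:c5}) and the helpful-pair machinery of Section~\ref{sec:hp}. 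You diagnose both genuine obstructions correctly in your last paragraph --- the unboundedness of $a$ and the fact that the slice $a=1$ \emph{is} Erd\H{o}s--Moser --- so your proposal is best read as an accurate account of why the conjecture remains open rather than as a proof.
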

\noindent If this conjecture holds true, then obviously so does the Erd\H{o}s-Moser conjecture. However,
whereas the Erd\H{o}s-Moser conjecture is open, we are able to establish the unsolvability of (\ref{KEME}) 
for many integers $a$.
\begin{Thm}
\label{main}
If $a$ has a regular prime as divisor 
or $2\le a\le 1500$, then the equation $aS_k(m)=m^k$ 
has no solution with $m\ge 4$. 
\end{Thm}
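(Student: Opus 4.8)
The plan is to turn $a\,S_k(m)=m^k$ into identities between $q$-adic valuations and to control $v_q(S_k(m))$ through the Bernoulli expansion, which in the convenient form
\begin{equation*}
S_k(m)=\sum_{i=0}^{k}\frac{1}{k+1-i}\binom{k}{i}B_i\,m^{\,k+1-i}
\end{equation*}
has no denominator $k+1$ left to worry about. First I would clear the degenerate regimes: $k=1$ gives $a(m-1)=2$, impossible for $m\ge4$, so $k\ge2$; and a size comparison shows $k\sim m\log(a+1)$ (the classical $k\sim m\log 2$ of Erd\H{o}s--Moser when $a=1$), so by Theorem~\ref{provo} both $m$ and $k$ are astronomically large in any solution. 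This largeness of $k$ drives everything.

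The technical core is a single lemma: \emph{every prime dividing $m$ is irregular}. To see this, let $q\mid m$ be regular and put $v:=v_q(m)\ge1$. On the one hand the equation gives $v_q(S_k(m))=k v-v_q(a)\ge k-\log_q a$, which is gigantic. On the other hand $v_q(S_k(m))$ is \emph{tiny}: in the expansion above the factors $m^{k+1-i}$ make all but the top indices negligible, and for those von Staudt--Clausen controls the denominators of the $B_i$ while Kummer's congruence, \emph{together with the regularity of $q$}, makes $B_i/i$ a $q$-adic unit whenever $(q-1)\nmid i$; hence the surviving terms have valuation $O(v+\log k)$, and one isolates a strictly minimal one, giving $v_q(S_k(m))=O(v+\log k)$. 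These two estimates are irreconcilable. (For odd $k$ the vanishing $B_k$ is replaced by the now-dominant $B_{k-1}$, and $q=2$, always regular, is covered since $(q-1)\mid k$; thus $m$ is odd, and as every regular prime is excluded, the least prime factor of $m$ is at least $37$.)

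The first clause is now immediate: a regular prime $p\mid a$ divides $m^k$, hence $m$, contradicting the lemma. For the second clause this disposes of every $a\in[2,1500]$ except those all of whose prime factors are irregular; in that range these are exactly the irregular primes $p_0\le1500$ together with $a=37^2=1369$, since a product of two distinct irregular primes already exceeds $1500$. (The density-one assertion, which ranges over all $a$, rests on the conjecture that regular primes are plentiful enough for $\sum_{q\ \mathrm{reg}}1/q$ to diverge, whence integers with no regular prime factor have density zero; no conjecture is needed for the finite range.)

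The surviving cases $a=p_0$ and $a=1369$ are where the main obstacle lies. Here $p_0\mid m$, every prime of $m$ is irregular, and the equation demands $v_{p_0}(S_k(m))=k\,v_{p_0}(m)-v_{p_0}(a)$. When $(p_0-1)\mid k$ this is easy --- von Staudt--Clausen gives $v_{p_0}(S_k(m))=v_{p_0}(m)-1$, forcing $k=1$ for prime $a$ and $k=2$ for $a=1369$, both excluded by hand --- so the survivor is $(p_0-1)\nmid k$, where one would need $v_{p_0}(B_k)$ to take an astronomically large value, and indeed for exceptional indices an irregular prime \emph{can} divide a Bernoulli numerator to a high power, so no crude bound suffices. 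The way through is to show, via Kummer congruences modulo powers of $p_0$ (conceptually, the $p_0$-adic $L$-function together with Ferrero--Washington, $\mu=0$), that $v_{p_0}(B_k)$ is large only when $k$ is $p_0$-adically extremely close to a fixed zero, thereby pinning $k$ to a very thin set of residues; the same must hold \emph{simultaneously} at every irregular prime dividing $m$, and colliding these conditions with $k\sim m\log(a+1)$ is expected to leave no admissible pair $(m,k)$, with any finite residue of cases settled computationally.
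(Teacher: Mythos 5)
Your reduction of the first clause is sound and matches the paper's route: every prime dividing $m$ must be irregular (the paper's Theorem~\ref{thm:t4}(a), proved via the Voronoi congruence giving $m\mid U_k$ and then the Kummer congruence, rather than your valuation-counting sketch, but the idea is the same), and since $a\mid m^k$ a regular prime divisor of $a$ yields an immediate contradiction. You also correctly identify that the finite range $2\le a\le 1500$ then reduces to the irregular primes in that interval together with $a=37^2=1369$.

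The genuine gap is that for these surviving cases --- which are the entire substance of the second clause --- you have no proof, only a program ending in ``is expected to leave no admissible pair.'' Invoking $p_0$-adic $L$-functions and Ferrero--Washington does not close this: first, your premise that $v_{p_0}(B_k)$ must be ``astronomically large'' is unjustified, because the huge valuation of $S_k(m)=m^k/a$ can a priori be produced by cancellation among the terms $\binom{k}{j}B_{k-j}m^{j+1}/(j+1)$ rather than by a single term of huge valuation; the rigorous conclusion one can extract is only $\ord_{p_0}(B_k/k)\ge 2\ord_{p_0}m$ (Theorem~\ref{thm:t4}(c)), which is far from a contradiction. Second, even granting a thin set of admissible residues for $k$, ``colliding'' that with $k\sim m\log(a+1)$ proves nothing, since $m$ is a free parameter. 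The paper's actual mechanism is entirely different and elementary: Theorem~\ref{thm:t4}(d) forces $k\equiv r\pmod{p_0-1}$ for an irregular pair $(r,p_0)$, and each such residue class is then eliminated by \emph{helpful pairs} --- auxiliary primes $q$ with $(q-1)$ dividing a suitable lift $\ell(p_0-1)$ of the modulus for which $aS_t(c)\equiv c^t\pmod q$ has no solution $1\le c\le q-1$ (Lemma~\ref{helpful} and Proposition~\ref{prop:t5}). This is a finite, checkable computation recorded in Table~\ref{table3}, and it is the ingredient your proposal is missing.
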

\noindent (We consider $2$ to be a regular prime, see Section~\ref{preli} for details.)
The first restriction on $a$ is not very difficult to prove, but powerful in its consequences. 
Assume that there exists a real number
$\delta<1$ such that the number of irregular primes $p\le x$ is bounded above by $\delta x/\log x$ as
$x\rightarrow \infty$. It then follows (see Section~\ref{preli}) that, for a set of integers $a$ of density 1, $aS_k(m)=m^k$ 
has no solution with $m\ge 4$. 
The first restriction implies that in order to exclude the $a$ in the range $2\le a\le 1500$, one has to
exclude $a=37^2$ and all irregular primes in this interval. These 
$a$ can be 
dealt with using various technical and not very general necessary conditions for~\eqref{KEME} to be solvable.

In case we are not able to exclude a square-free $a$, we are able to show that if 
\eqref{KEME} holds, then both $k$ and $m$ are large.
\begin{Thm}
\label{main2} 
Suppose that $aS_k(m)=m^k$, $m\ge 4$ and $a$ is square-free, then both $k$ and $m$ exceed
$3.44\cdot 10^{82}$.
\end{Thm}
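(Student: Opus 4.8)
The plan is to dispose of the case $a=1$ at once and then, for square-free $a\ge 2$, to run a Moser-type congruence analysis whose engine is the Carlitz--von Staudt theorem. If $a=1$ the equation is exactly the Erd\H os--Moser equation $S_k(m)=m^k$, so Theorem~\ref{provo} (with $a=1$) already forces $m>10^{9\cdot10^6}$; moreover the elementary inequality $S_k(m)\ge (m-1)^{k+1}/(k+1)$ gives $k+1>(m-1)(1-1/m)^k$, whence $k>m/2$, so both quantities are far larger than required. Thus assume $a\ge 2$ is square-free. Since $a\mid m^k$ and $a$ is square-free, every prime dividing $a$ divides $m$, whence $a\mid m$ and in particular $m\ge a$. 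A short pairing argument ($j\leftrightarrow m-j$), applied modulo small factors, rules out odd $k$, so we may assume $k$ is even.

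Next I would extract the divisibility skeleton of a hypothetical solution. Reducing $aS_k(m)=m^k$ modulo $m-1$ and using $m\equiv1$ together with the Carlitz--von Staudt congruence $\sum_{j=1}^{n}j^k\equiv-\sum_{p\mid n,\,(p-1)\mid k}n/p\pmod n$ (valid for even $k$) gives
\[
-a\sum_{\substack{p\mid m-1\\(p-1)\mid k}}\frac{m-1}{p}\equiv 1\pmod{m-1}.
\]
Reducing this modulo a prime $q\mid m-1$, respectively modulo $q^2$, forces $q$ to satisfy $(q-1)\mid k$ and forces $m-1$ to be square-free: if some prime $q\mid m-1$ had $(q-1)\nmid k$, or if $q^2\mid m-1$, the left-hand side would be $\equiv0\pmod q$ while the right-hand side is $\equiv1$. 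The same computation modulo $m+1$, $2m-1$ and $2m+1$ (where $m\equiv-1,\,2^{-1},\,-2^{-1}$ respectively) yields the analogous conclusion, except that now a bounded set of exceptional primes, namely those dividing $a+1$, $2a-1$ and $2a+1$, must be set aside. Thus every prime dividing $m\pm1$ or $2m\pm1$, apart from these exceptions, is a \emph{special} prime, i.e.\ satisfies $(p-1)\mid k$.

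With this skeleton in hand I would bootstrap, following Moser's idea as streamlined in \cite{tophat}. Starting from a small seed of special primes one shows successively that $2,3,4,5,\dots$ divide $k$: each new piece of information about $k$ forces one of $m\pm1,\,2m\pm1,\dots$ to acquire a new prime factor $p$ with $(p-1)\mid k$, which in turn enlarges the set of divisors of $k$. Carrying this out until the relevant threshold prime reaches roughly $190$ is a finite computation, and it yields $(q-1)\mid k$ for all primes $q$ up to that point, hence $k\ge \mathrm{lcm}\{q-1:\ q\le 190\}\approx e^{190}>3.44\cdot10^{82}$. The bounded exceptional sets depend on $a$ but, being bounded uniformly, do not prevent the bootstrap from closing for every square-free $a\ge2$.

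It remains to obtain the same lower bound for $m$, which is the main obstacle, since for small $a$ the size relations give only $m>\sqrt{k}$. The idea is to feed the special-prime information back into the equation: once $(q-1)\mid k$ for every prime $q$ up to the threshold, reducing $aS_k(m)=m^k$ modulo each such $q$ (using $j^k\equiv1$ for $q\nmid j$, so that $S_k(m)\equiv (m-1)-\lfloor (m-1)/q\rfloor\pmod q$) imposes a constraint linking $m\bmod q$ and $\lfloor m/q\rfloor$; run over all small $q$ in Moser's style, these constraints pin down the base-$q$ representations of $m$ and force $m$ to be large, again of size about $e^{190}$, after eliminating the finitely many small candidates directly. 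The delicate part throughout is the uniform-in-$a$ bookkeeping of the exceptional primes dividing $a+1$ and $2a\pm1$, and the verification that this digit analysis genuinely produces the stated constant $3.44\cdot10^{82}$ for both $k$ and $m$; it is precisely here that square-freeness of $a$ is essential, both to guarantee $a\mid m$ and to keep the exceptional sets under control.
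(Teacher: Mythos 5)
Your set-up is on the right track: disposing of $a=1$ via Theorem~\ref{provo}, the reduction to even $k$, the observation that $a\mid m$, and the Carlitz--von Staudt skeleton (that $m-1$, $2m-1$ and, away from primes dividing $a+1$, also $m+1$ and $2m+1$ are square-free with every prime divisor $p$ satisfying $(p-1)\mid k$) all match the paper's Lemmas~\ref{lem:l11} and~\ref{lem:l12}. (Minor slip: the exceptional primes come only from $a+1$, via $aS_k(m+1)=(a+1)m^k$; nothing like $2a\pm1$ enters, and $m-1$, $2m-1$ admit no exceptions at all.)

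The genuine gap is in how you convert this skeleton into the numerical bounds. Your ``bootstrap'' --- that each new divisor of $k$ forces one of $m\pm1$, $2m\pm1$ to acquire a new prime factor, culminating in $(q-1)\mid k$ for all primes $q$ up to roughly $190$ and hence $k\ge\operatorname{lcm}\{q-1\}$ --- has no mechanism behind it and is not what Moser's method delivers. Knowing that $d\mid k$ tells you nothing about which primes divide $m\pm1$ or $2m\pm1$: the congruences constrain only those primes that happen to divide these four numbers, and one cannot force specific primes into them. The paper's actual argument is a counting argument: each of the four congruences \eqref{eq14}--\eqref{eq17} asserts that a certain positive rational is an integer, hence $\ge 1$, and the one attached to $(m+1)/a_1$ is $>1$, hence $\ge 2$ (this uses that $(m+1)/a_1$ is even and that $a\ge 37$, the latter from Corollary~\ref{cor:c5}). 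Adding the four and stripping off the explicit rational parts yields $\sum_{p\mid M}1/p>2.1657$ with $M=(m^2-1)(4m^2-1)/(6a_1a_2)$ square-free; since $\sum_{j\le 139}1/p_j<2.1657$, the number $M$ has at least $139$ prime factors, so $M\ge\prod_{j\le 139}p_j$, and $M<m^4/3$ then gives $m>3.4429\cdot10^{82}$. The bound on $k$ is \emph{not} a divisibility statement at all: it follows from $am<(a+1)(k+1)$ (Lemma~\ref{lem:l13}) combined with $a\ge 1501$, which in turn requires Theorem~\ref{main} (the exclusion of $2\le a\le 1500$ and of $a$ with a regular prime divisor). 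Your proposal uses neither of these ingredients, so neither stated bound is actually established.
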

\noindent We like to point out that for solutions with $m\equiv 1\pmod{3}$ or $m\equiv 1\pmod{30}$ much
larger lower bounds hold true (see Theorem~\ref{thm:t6}).

For a fixed integer $a\ge 1$ it is not known whether there are finitely many solutions
$(m,k)$ of \eqref{KEME} or not. In this direction we can only contribute the following modest result.
\begin{Prop}
\label{unique}
Let $(m_1,k_1)$ and $(m_2,k_2)$ be different solutions of $aS_k(m)=m^k$.
Then $m_1\ne m_2$ and $k_1\ne k_2$.
\end{Prop}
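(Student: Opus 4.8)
The plan is to recast the Proposition as two separate injectivity statements about the quantity $a = m^k/S_k(m)$, regarded first as a function of $k$ with $m$ fixed and then as a function of $m$ with $k$ fixed. Every solution of $aS_k(m)=m^k$ satisfies $m\ge 2$ (for $m=1$ the left side vanishes while $m^k=1$), and for $m\ge2$ we have $S_k(m)>0$, so each solution obeys $a=m^k/S_k(m)$. Hence it will suffice to prove: (i) for fixed $m\ge 2$ the map $k\mapsto m^k/S_k(m)$ is strictly increasing, and (ii) for fixed $k\ge 1$ the map $m\mapsto m^k/S_k(m)$ is strictly decreasing. Granting (i), two solutions with $m_1=m_2$ are forced to have $k_1=k_2$; granting (ii), two solutions with $k_1=k_2$ are forced to have $m_1=m_2$. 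In either case the two solutions coincide, which is the contrapositive of the assertion.

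For (i) I would compare consecutive values. Writing $f(k)=m^k/S_k(m)$ one has $f(k+1)/f(k)=m\,S_k(m)/S_{k+1}(m)$, so strict monotonicity reduces to $m\,S_k(m)>S_{k+1}(m)$. This follows immediately from the identity $m\,S_k(m)-S_{k+1}(m)=\sum_{j=1}^{m-1}(m-j)j^k$, all of whose summands are positive for $m\ge 2$. This step is routine.

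For (ii) the corresponding reduction carries the real content. Setting $g(m)=m^k/S_k(m)$ and using $S_k(m+1)=S_k(m)+m^k$, the inequality $g(m+1)<g(m)$ is equivalent to $[(m+1)^k-m^k]\,S_k(m)<m^{2k}$. Dividing by $m^{2k}$ and putting $b_j=j/m$ and $a_j=j(m+1)/m^2$, this becomes $\sum_{j=1}^{m-1}(a_j^k-b_j^k)<1$. The key observation I would exploit is that the intervals $(b_j,a_j)$ are pairwise disjoint and all contained in $(0,1)$: indeed $a_j<b_{j+1}$ since $(j+1)m-j(m+1)=m-j>0$, while the largest right endpoint is $a_{m-1}=(m^2-1)/m^2<1$. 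Consequently
\[
\sum_{j=1}^{m-1}\bigl(a_j^k-b_j^k\bigr)=\sum_{j=1}^{m-1}\int_{b_j}^{a_j}k\,t^{k-1}\,\d t\le \int_0^{a_{m-1}}k\,t^{k-1}\,\d t=a_{m-1}^k<1,
\]
which establishes (ii).

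I expect step (ii) to be the main obstacle. The naive estimates $S_k(m)<m^{k+1}/(k+1)$ and $(m+1)^k-m^k<k(m+1)^{k-1}$ are individually too lossy and, multiplied together, fail once $k$ is large relative to $m$ — precisely because the true size of $S_k(m)$ is then governed by its top term $(m-1)^k$ rather than by the integral heuristic. It is the disjointness of the intervals $(b_j,a_j)$ that makes the telescoped integral bound tight enough to hold uniformly in both $k$ and $m$, and locating that disjointness is the crux of the argument.
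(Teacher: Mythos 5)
Your proof is correct, and the first half (uniqueness of $k$ for fixed $m$) is essentially the paper's own argument: both rest on the strict monotonicity in $k$ of $S_k(m)/m^k=\sum_{j<m}(j/m)^k$. The second half, however, is a genuinely different route. The paper does \emph{not} prove that $m\mapsto m^k/S_k(m)$ is decreasing; instead it supposes $aS_k(m)=m^k$ and $aS_k(m+n)=(m+n)^k$ for some $n\ge 1$, expands $S_k(m+n)=S_k(n)+mn^k+\sum_{j=1}^{k}\binom kj n^{k-j}S_j(m)$, subtracts the binomial expansion of $(m+n)^k$, and arrives at $n^k=aS_k(n)+amn^k+\sum_{j=1}^{k-1}\binom kj n^{k-j}\bigl(aS_j(m)-m^j\bigr)$, whose right-hand side exceeds $n^k$ because every bracket is positive by the sign information already obtained in the first half. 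Your argument replaces this algebraic identity by the clean analytic statement that $a=m^k/S_k(m)$ is strictly decreasing in $m$, proved via the disjointness of the intervals $\bigl(j/m,\,j(m+1)/m^2\bigr)$ inside $(0,1)$ and the bound $\sum_j\int_{b_j}^{a_j}kt^{k-1}\,{\mathrm d}t\le a_{m-1}^k<1$; the disjointness check $a_j<b_{j+1}\iff j<m$ is exactly right and is indeed the crux. What your version buys is a stronger, $a$-free structural fact (strict monotonicity of the ratio in each variable separately), from which both conclusions of the Proposition and also the paper's closing Remark (that distinct solutions satisfy either $m_1>m_2,\,k_1>k_2$ or $m_1<m_2,\,k_1<k_2$) follow immediately; what the paper's version buys is that it recycles the first half's inequality $aS_j(m)-m^j>0$ for $j<k$ and needs no integral comparison. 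Both are elementary and complete.
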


A final approach of dealing with equation 
\eqref{KEME} is to try to prove that $k$ is divisible by $120$ say.
Once established, there are many options of how to get an even bigger number to divide $k$. A cascade
of ways how to proceed further arises and it seems very likely then that also in this case $a+1$ cannot
occur as a ratio. We demonstrate this cascade process in Section~\ref{sec:cascade}. Paul Tegelaar 
\cite{tilier} jokingly called this `the method of infinite ascent'.

After discussing some basic material on Bernoulli numbers 
and power sums in Section~\ref{preli}, we obtain a crucial result (Theorem~\ref{thm:t4}) relating 
Bernoulli numbers
and solutions of the Kellner-Erd\H{o}s-Moser equation in Section~\ref{sec:kell}. Integers $a$ that
are a product of irregular primes cannot be immediately excluded, and for these one can use 
\emph{helpful pairs}, see Section~\ref{sec:hp}. They allow one to rule out that $k\equiv c\pmod{d}$ for
many even integers $c\ge 2$ and $d$. In Section~\ref{exclusive} we demonstrate with both an easy and
a difficult example how to exclude a given integer ratio $\rho$. Table~\ref{table3} illustrates in a compact way
how to show that $3\le \rho\le 1501$ are forbidden ratios. Some ratios are clearly much easier to exclude
than others and this is discussed in Section \ref{sec:bad}.
In Section~\ref{sec:cascade} 
we discuss how to show that a given integer divides $k$.
In Section~\ref{sec:lower} we reason in the way of Moser to derive lower bounds for $k$ and $m$ in case $a$ is squarefree.
In Section~\ref{sec:proofs} proofs of the new results announced above are given. These 
are mainly based on work done in earlier sections.
In the final section, Section~\ref{sec:other}, some further properties of potential solutions of the Kellner-Erd\H{o}s-Moser
equation are derived. The proof that the integers $2\le a\le 1500$ are forbidden makes use of
Tables~\ref{table2} and \ref{table3}.

A survey of earlier work on Erd\H{o}s-Moser type conjectures can be found in
Moree~\cite{magics}, also see \cite[Chapter 8]{Borwein}, for an expository account of the
work of Gallot et al.~\cite{GMZ}.

\section{Preliminaries on Bernoulli numbers and power sums}
\label{preli}
\begin{Lem}[Carlitz-von Staudt]
\label{lem:staudt}
Let $k$ and $m$ be positive integers, then
$$
S_k(m)=\sum_{j=1}^{m-1}j^k=\begin{cases}
0\pmod{\frac{m(m-1)}2}&\text{if $k$ is odd},\\
-\sum_{p\mid m,(p-1)\mid k}{\frac mp}\pmod{m} &\text{otherwise}.
\end{cases}
$$
\end{Lem}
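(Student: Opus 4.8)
The plan is to treat the two parities of $k$ by entirely separate methods. For odd $k$ I would use the reflection $r \mapsto N-r$. For any $N\ge 1$ and odd $k$ one has $2S_k(N)=\sum_{r=1}^{N-1}(r^k+(N-r)^k)$, and since $k$ is odd each bracket satisfies $r^k+(N-r)^k\equiv r^k+(-r)^k=0\pmod N$; hence $N\mid 2S_k(N)$. Applying this with $N=m$ gives $m\mid 2S_k(m)$, while applying it with $N=m-1$ together with $S_k(m)=S_k(m-1)+(m-1)^k\equiv S_k(m-1)\pmod{m-1}$ gives $(m-1)\mid 2S_k(m)$. As $\gcd(m,m-1)=1$, these combine to $m(m-1)\mid 2S_k(m)$, that is $\tfrac{m(m-1)}2\mid S_k(m)$, which is the first case.

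For even $k$ I would localise at each prime power. Writing $p^a\| m$, since $j^k\bmod p^a$ depends only on $j\bmod p^a$ and each residue class modulo $p^a$ is attained exactly $m/p^a$ times as $j$ runs through $0,1,\dots,m-1$, one gets $S_k(m)\equiv\tfrac{m}{p^a}S_k(p^a)\pmod{p^a}$, with $\tfrac{m}{p^a}$ a unit modulo $p^a$. On the other side of the claimed congruence, every term $m/q$ with $q\ne p$ is divisible by $p^a$, so modulo $p^a$ the right-hand side collapses to $-m/p$ when $(p-1)\mid k$ and to $0$ otherwise. By the Chinese Remainder Theorem it therefore suffices to prove the local fact
\[
S_k(p^a)\equiv\begin{cases} -p^{a-1}\pmod{p^a}&\text{if }(p-1)\mid k,\\ 0\pmod{p^a}&\text{otherwise,}\end{cases}
\]
for even $k\ge 2$, since multiplying by the unit $m/p^a$ then reproduces exactly $-m/p$, respectively $0$.

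To prove the local fact I would split $S_k(p^a)$ into its unit part $U_a=\sum_{\gcd(r,p)=1}r^k$ and the contribution of the multiples of $p$, which equals $p^kS_k(p^{a-1})$. An induction on $a$ shows the latter is $\equiv 0\pmod{p^a}$: since $k\ge 2$, the valuation $v_p\bigl(p^kS_k(p^{a-1})\bigr)\ge k+v_p\bigl(S_k(p^{a-1})\bigr)\ge a$ once the inductive value of $S_k(p^{a-1})$ modulo $p^{a-1}$ is inserted. Thus $S_k(p^a)\equiv U_a$, and everything reduces to the unit sum. Multiplication by any unit $c$ permutes the units, so $(c^k-1)U_a\equiv 0\pmod{p^a}$; when $(p-1)\nmid k$ one may pick $c$ reducing to a primitive root mod $p$, so that $c^k\not\equiv 1\pmod p$, i.e. $c^k-1$ is invertible, forcing $U_a\equiv 0$.

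The main obstacle is the remaining case $(p-1)\mid k$, where this trick degenerates because $c^k\equiv 1\pmod p$ for every unit. For odd $p$ I would fix a primitive root $g$ modulo $p^a$ and write $U_a=\sum_{i=0}^{\phi-1}(g^k)^i$, where $\phi=p^{a-1}(p-1)$; setting $h=g^k$, which now has $p$-power order $p^c$ and hence lies in $1+p\mathbb Z$, this collapses to $U_a=\tfrac{\phi}{p^c}\sum_{j=0}^{p^c-1}h^j$. A binomial expansion of $h^j=(1+p^{a-c}w)^j$ shows $\sum_{j<p^c}h^j\equiv p^c\pmod{p^{c+1}}$, whence $U_a\equiv p^{a-1-c}(p-1)\,p^c=p^{a-1}(p-1)\equiv -p^{a-1}\pmod{p^a}$, using $p-1\equiv-1$. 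The prime $p=2$, where $(p-1)\mid k$ always holds and the unit group is not cyclic, must be handled by a short direct computation of $\sum_{r\text{ odd}}r^k\pmod{2^a}$ for even $k$. Keeping the valuations in this expansion exact, and disposing of the $p=2$ case cleanly, is where the real work lies.
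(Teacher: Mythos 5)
The paper does not actually prove this lemma: it attributes the result to Carlitz and points to \cite{Canada} and \cite{tophat} for a proof, so there is no in-paper argument to compare yours against. On its own merits your proof is correct and essentially complete. The odd case via the reflection $r\mapsto N-r$ applied with both $N=m$ and $N=m-1$ is exactly right. For even $k$, the reduction to the local claim $S_k(p^a)\equiv -p^{a-1}$ or $0\pmod{p^a}$ is sound (the cross terms $m/q$ with $q\ne p$ do vanish modulo $p^a$, and $m/p^a$ is a unit there), the induction disposing of $p^kS_k(p^{a-1})$ needs only $k\ge 2$, and the primitive-root computation checks out: writing $h=g^k=1+p^{a-c}w$ with $p\nmid w$, the term of index $i\ge1$ in $\sum_{j<p^c}h^j=\sum_i p^{(a-c)i}w^i\binom{p^c}{i+1}$ has $p$-adic valuation $(a-c)i+c-\ord_p(i+1)\ge c+1$, so the inner sum is $p^c$ modulo $p^{c+1}$ and $U_a\equiv p^{a-1}(p-1)\equiv -p^{a-1}\pmod{p^a}$ as you claim. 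The one piece you leave as a promissory note, $p=2$, does go through and is no harder than the odd case: for $a\ge 3$ one has $(\mathbb Z/2^a\mathbb Z)^{*}=\{\pm1\}\times\langle 5\rangle$, so for even $k$ the unit sum equals $2\sum_{j=0}^{2^{a-2}-1}5^{jk}$, and the identical geometric-series collapse (with $5^k=1+2^{a-c}w$, $w$ odd) yields $2\cdot 2^{a-2}=2^{a-1}\pmod{2^a}$, matching the required $-2^{a-1}\equiv 2^{a-1}\pmod{2^a}$; the cases $a=1,2$ are immediate. So the only remaining task is to write out that short computation explicitly rather than flagging it.
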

This result, with some small error (cf.~Moree~\cite{Canada}), was published in 1961 by Carlitz. For an
easy reproof of the above result, see Moree~\cite{tophat}.

Recall that the \emph{Bernoulli numbers} $B_k$ are defined by the power series
$$
\frac t{e^t-1}=\sum_{k=0}^{\infty}\frac{B_kt^k}{k!}.
$$ 
They are rational numbers and can be written
as $B_k=U_k/V_k$, with $V_k>0$ and $\gcd(U_k,V_k)=1$. 
One has $B_0=1$, $B_1=-1/2$, $B_2=1/6$ and $B_{2j+1}=0$ for $j\ge 1$.

In the next four lemmas, we record some well-known facts about the Bernoulli numbers (see \cite[Chapter~15]{IR}).

\begin{Lem}[von Staudt-Clausen]
\label{lem:clausen}
If $k\ge 2$ is even, then $V_k=\prod_{(p-1)\mid k}p$.
\end{Lem}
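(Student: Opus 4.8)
The plan is to reduce the statement to a prime-by-prime $p$-adic assertion and then read off the denominator. Writing $v_p$ for the $p$-adic valuation, I would show that for every prime $p$ one has $v_p(B_k)=-1$ when $(p-1)\mid k$ and $v_p(B_k)\ge 0$ otherwise; since $k$ is even these two cases describe exactly which primes occur in the denominator, so $V_k=\prod_{(p-1)\mid k}p$ follows at once. The odd-index Bernoulli numbers play essentially no role, since $B_{2j+1}=0$ for $j\ge 1$ and $B_1=-1/2$ satisfies $v_p(B_1)\ge -1$ for every $p$.

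First I would record the power-sum congruence modulo $p$. Taking $m=p$ in Lemma~\ref{lem:staudt} gives, for even $k$,
$$
S_k(p)\equiv -\!\!\sum_{q\mid p,\,(q-1)\mid k}\frac pq\pmod p,
$$
and since $p$ is the only prime dividing $p$ this reads $S_k(p)\equiv -1\pmod p$ when $(p-1)\mid k$ and $S_k(p)\equiv 0\pmod p$ otherwise. The same congruence is immediate from Fermat's little theorem together with the vanishing of $\sum_{a=1}^{p-1}a^k$ when $(p-1)\nmid k$, seen by summing a geometric progression over a primitive root.

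Next I would bring in $B_k$ through the classical Faulhaber relation, which after clearing the denominator $k+1$ reads
$$
(k+1)\,S_k(p)=\sum_{j=0}^{k}\binom{k+1}{j}B_j\,p^{\,k+1-j}.
$$
The term $j=k$ is exactly $(k+1)B_k\,p$, and the term $j=k-1$ vanishes because $B_{k-1}=0$ (here $k\ge 4$ is even, so $k-1$ is odd; the case $k=2$ with $B_2=1/6$ is the base of the induction). For each remaining term $j\le k-2$ the binomial coefficient $\binom{k+1}{j}$ is an integer, so working in $\mathbb Z_p$ and using $v_p(B_j)\ge -1$ for $j<k$ — the inductive hypothesis for even $j\ge 2$, the value $B_1=-1/2$, and $B_j=0$ for odd $j\ge 3$ — one gets
$$
v_p\!\left(\binom{k+1}{j}B_j\,p^{\,k+1-j}\right)\ge 0+(-1)+(k+1-j)=k-j\ge 2 .
$$
Hence the entire tail lies in $p\mathbb Z_p$ and $(k+1)B_k\,p\equiv (k+1)S_k(p)\pmod{p\mathbb Z_p}$. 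Combining this with the congruence of the previous step pins down $v_p(B_k)$: when $p\nmid(k+1)$ one cancels $k+1$ and concludes $v_p(B_k)=-1$ precisely when $(p-1)\mid k$, and $v_p(B_k)\ge 0$ otherwise.

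The main obstacle is the borderline case $p\mid(k+1)$, where cancelling the factor $k+1$ disturbs the single power of $p$ being tracked and the crude valuation comparison becomes inconclusive. I expect to resolve this by a finer count: writing $k+1=p^{e}u$ with $p\nmid u$ and showing, via Kummer's theorem on the $p$-adic valuation of the $\binom{k+1}{j}$, that the tail in fact lies in $p^{\,e+1}\mathbb Z_p$, so that the surplus factor $p^{e}$ divides out cleanly before the comparison is made. With the base case $k=2$ immediate, this completes the scheme.
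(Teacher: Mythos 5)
The paper offers no proof of this lemma to compare against: von Staudt--Clausen is listed among the ``well-known facts about the Bernoulli numbers'' and simply cited from Ireland and Rosen, so your proposal is a from-scratch proof of a result the paper deliberately imports as standard. That said, the argument you outline is correct and is essentially the classical one. The reduction to the prime-by-prime claim ($v_p(B_k)=-1$ iff $(p-1)\mid k$, else $v_p(B_k)\ge 0$), the evaluation of $S_k(p)$ modulo $p$ via a primitive root (or via Lemma~\ref{lem:staudt}, whose elementary proof does not pass through Bernoulli numbers, so there is no circularity), and the induction through $(k+1)S_k(p)=\sum_{j\le k}\binom{k+1}{j}B_j p^{k+1-j}$ --- which is just the paper's Lemma~\ref{lem:summation} with denominators cleared --- all check out: each tail term with $j\le k-2$ has valuation at least $k-j\ge 2$, so when $p\nmid(k+1)$ you read off $v_p(B_kp)$ from $S_k(p)\bmod p$. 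The one step you leave as a plan, the case $p\mid(k+1)$, does resolve exactly as you predict: writing $i=k+1-j$ and $e=v_p(k+1)$, the only terms not already of valuation $\ge e+1$ for trivial reasons have $3\le i\le e+1$, and there a direct count (or Kummer's carry criterion) gives $v_p\bigl(\tbinom{k+1}{i}\bigr)=e-v_p(i)$, whence the term has valuation at least $e-1+(i-v_p(i))\ge e+1$ because $i-v_p(i)\ge 2$ for all $i\ge 3$; the surplus $p^{e}$ then cancels and the comparison with $S_k(p)\bmod p$ goes through as in the unramified case. So the scheme is complete in principle; the only cosmetic point is that your displayed congruence claims the tail lies in $p\mathbb Z_p$ while your estimate actually puts it in $p^2\mathbb Z_p$, which is harmless.
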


\begin{Lem}[Kummer congruence]
\label{lem:kummer}
Let $k\ge 2$ be even and $p$ be a prime with $(p-1)\nmid k$. If $k\equiv r\pmod{{p-1}}$, then $B_k/k\equiv B_r/r\pmod{p}$.
\end{Lem}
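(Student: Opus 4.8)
The plan is to deduce the congruence from the classical Faulhaber expansion of the power sum $S_k(p)=\sum_{a=1}^{p-1}a^k$ in Bernoulli numbers, using Fermat's little theorem to compare the exponents $k$ and $r$, and using Lemma~\ref{lem:clausen} to keep all denominators under control. Note first that $p$ must be odd, since for $p=2$ the hypothesis $(p-1)\nmid k$ is never satisfied; hence $p-1$ is even and $k\equiv r\pmod{p-1}$ forces $r$ to be even. Moreover $(p-1)\nmid r$, for otherwise $(p-1)\mid k$. By von Staudt--Clausen (Lemma~\ref{lem:clausen}) we then have $p\nmid V_k$ and $p\nmid V_r$, so $B_k$ and $B_r$ are $p$-integral and both sides of the asserted congruence are well defined modulo $p$, once one also knows the $p$-integrality of $B_k/k$. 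After normalising $r$ into the range $2\le r\le p-3$, everything is symmetric in $k$ and $r$.

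The bridge between Bernoulli numbers and the quantity $a^k$, to which Fermat applies, is Faulhaber's identity
\[
S_k(p)=\frac1{k+1}\sum_{i=0}^{k}\binom{k+1}{i}B_i\,p^{\,k+1-i}.
\]
Here the term $i=k$ contributes $B_kp$, and since $k$ is even the $i=k-1$ term is $\equiv0\pmod{p^2}$, so one is tempted to read off $S_k(p)\equiv pB_k\pmod{p^2}$, i.e.\ $B_k\equiv S_k(p)/p\pmod p$. The difficulty, and the reason this naive argument does not yet prove Kummer's congruence, is twofold. First, one must verify that each coefficient $\frac1{k+1}\binom{k+1}{i}B_i$ is $p$-integral, and both the factor $1/(k+1)$ and the von Staudt--Clausen denominators of the $B_i$ can introduce a $p$ in the denominator; this is the main obstacle, requiring a careful valuation bookkeeping built on Lemma~\ref{lem:clausen}. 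Second, and more seriously, the relation $B_k\equiv S_k(p)/p\pmod p$ isolates $B_k$ rather than $B_k/k$, and a termwise application of Fermat only controls $a^k$ modulo $p$, whereas $S_k(p)$ must here be understood modulo $p^2$; indeed $B_k\not\equiv B_r\pmod p$ in general, so the division by $k$ is not cosmetic.

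The resolution, which is exactly where the division by $k$ enters, is to pass to Teichm\"uller representatives $\omega(a)$, for which $\omega(a)^k$ depends only on $k\bmod(p-1)$, and to regularise by the standard factor $(1-c^k)$ for an auxiliary unit $c\not\equiv1\pmod p$. The quantity $(1-c^k)B_k/k$ is then a genuine $p$-integer expressible as a Riemann sum whose dependence on the exponent is continuous in $k\bmod(p-1)$; comparing the sums for $k$ and $r$ and cancelling the common factor $(1-c^k)\equiv(1-c^r)\not\equiv0\pmod p$ yields $B_k/k\equiv B_r/r\pmod p$ and simultaneously delivers the $p$-integrality of $B_k/k$ even when $p\mid k$. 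Once the uniform denominator control is in place, the congruence reduces to the elementary observation that $\omega(a)^k$ sees only $k\bmod(p-1)$. As this entire circle of ideas is classical, we may instead simply cite Ireland and Rosen \cite[Chapter~15]{IR}.
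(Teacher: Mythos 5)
The paper offers no proof of this lemma: it is recorded among ``well-known facts about the Bernoulli numbers'' with a pointer to Ireland and Rosen, Chapter~15, which is precisely the citation you fall back on at the end. Your sketch of the classical argument --- the parity and von Staudt--Clausen reductions, the correct diagnosis of why the naive Faulhaber-plus-Fermat computation only reaches $B_k$ rather than $B_k/k$, and the regularisation by $(1-c^k)$ with Teichm\"uller representatives --- is accurate, so your treatment is consistent with (and somewhat more informative than) the paper's.
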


\begin{Lem}
\label{lem:summation}
For any integers $k\ge 1$ and $m\ge 2$,
$$
S_k(m)=\sum_{j=0}^k \binom kj B_{k-j}\frac{m^{j+1}}{j+1}.
$$
\end{Lem}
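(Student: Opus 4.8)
The plan is to exploit the very definition of the Bernoulli numbers as the coefficients of the generating function $t/(e^t-1)$, rather than to induct on $m$. First I would form the exponential generating function of the power sums and recognize its geometric structure: since $\sum_{k\ge 0} S_k(m)\,t^k/k! = \sum_{j=1}^{m-1}\sum_{k\ge 0}(jt)^k/k! = \sum_{j=1}^{m-1} e^{jt}$, summing the geometric series gives $\sum_{j=1}^{m-1} e^{jt} = (e^{mt}-1)/(e^t-1) - 1$. The isolated $-1$ is the $j=0$ contribution, which affects only the constant ($k=0$) term; this is precisely why the stated identity is asserted for $k\ge 1$ and not $k=0$, and I would flag this as the one bookkeeping point that needs a word of care.

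Next I would factor the relevant piece as $(e^{mt}-1)/(e^t-1) = \tfrac{e^{mt}-1}{t}\cdot\tfrac{t}{e^t-1}$. The second factor is $\sum_{n\ge 0} B_n\,t^n/n!$ by the definition of the $B_n$ recalled in the excerpt, while the first factor expands elementarily, from $e^{mt}-1=\sum_{n\ge 1} m^n t^n/n!$, as $\tfrac{e^{mt}-1}{t} = \sum_{i\ge 0} \tfrac{m^{i+1}}{(i+1)!}\,t^i$. Forming the Cauchy product of these two series and reading off the coefficient of $t^k$ yields, for $k\ge 1$, the relation $S_k(m)/k! = \sum_{i=0}^{k} \tfrac{m^{i+1}}{(i+1)!}\cdot\tfrac{B_{k-i}}{(k-i)!}$, the $-1$ above being irrelevant once $k\ge 1$.

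It then remains only to repackage the coefficients: multiplying through by $k!$ and using $\tfrac{k!}{(i+1)!\,(k-i)!} = \tfrac{1}{i+1}\binom{k}{i}$, the right-hand side becomes $\sum_{i=0}^{k}\binom{k}{i} B_{k-i}\,\tfrac{m^{i+1}}{i+1}$, which is the claimed identity after renaming $i$ as $j$.

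I do not expect a genuine obstacle here, as the result is classical (it is exactly the reference to \cite[Chapter~15]{IR}). The only points deserving a word of justification are the legitimacy of the series manipulations, which may be read as identities of formal power series in $t$ so that convergence is a non-issue, and the $k=0$ versus $k\ge 1$ discrepancy noted above, which is what forces the hypothesis $k\ge 1$ in the statement.
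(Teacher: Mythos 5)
Your proof is correct and complete: the generating-function identity $\sum_{k\ge 0}S_k(m)t^k/k!=(e^{mt}-1)/(e^t-1)-1$, the factorization through $t/(e^t-1)$, and the Cauchy-product bookkeeping --- including the observation that the stray $-1$ only disturbs the $k=0$ coefficient, which is exactly why the lemma is stated for $k\ge 1$ --- are all in order, and a quick check at $k=1$ (where the formula gives $-m/2+m^2/2=m(m-1)/2$, consistent with $B_1=-1/2$) confirms the sign conventions match the paper's. The paper itself offers no proof, recording the lemma as a well-known fact with a pointer to \cite[Chapter~15]{IR}, where essentially this same generating-function computation appears, so your argument is the standard one and there is nothing to reconcile.
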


\begin{Lem}[Voronoi congruence]
\label{lem:voronoi}
Let $k$ and $m$ be positive integers, where $m\ge 2$ and $k\ge 2$ is even, then $V_kS_k(m)\equiv U_k m\pmod{m^2}$.
\end{Lem}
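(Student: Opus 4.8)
The plan is to deduce the congruence from the Bernoulli expansion of Lemma~\ref{lem:summation} together with the von Staudt--Clausen control on the denominators of the Bernoulli numbers (Lemma~\ref{lem:clausen}). First I would note that both $V_kS_k(m)$ and $U_km$ are integers, so their difference is an integer; hence $m^2\mid V_kS_k(m)-U_km$ will follow as soon as $v_p(V_kS_k(m)-U_km)\ge 2v_p(m)$ for every prime $p\mid m$, since for $p\nmid m$ one has $v_p(m^2)=0$ and there is nothing to prove. This reduces the assertion to a prime-by-prime valuation estimate, where $v_p$ denotes the $p$-adic valuation.

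Using Lemma~\ref{lem:summation} and $U_k=V_kB_k$, I would write
\[
V_kS_k(m)-U_km=\sum_{j=1}^{k}T_j,\qquad T_j:=V_k\binom kj B_{k-j}\frac{m^{j+1}}{j+1},
\]
the $j=0$ term being exactly $V_kB_km=U_km$. Fix a prime $p\mid m$ and set $v=v_p(m)\ge 1$. By the ultrametric inequality it suffices to show $v_p(T_j)\ge 2v$ for each $j\ge 1$, that is,
\[
v_p(V_k)+v_p\!\binom kj+v_p(B_{k-j})+(j-1)v-v_p(j+1)\ge 0.\qquad(\ast)
\]
Here Lemma~\ref{lem:clausen} enters in two ways: it yields $v_p(B_{k-j})\ge -1$ (the denominators being squarefree), and it supplies the crucial compensation $v_p(V_k)\ge 1$ exactly when $(p-1)\mid k$. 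The dangerous terms are those with $v_p(B_{k-j})=-1$, which by Lemma~\ref{lem:clausen} force $(p-1)\mid(k-j)$; for these one argues that the deficit is absorbed either by $v_p(V_k)$ (when $(p-1)\mid k$ as well) or by the factor $\binom kj$, while the crude bound $v_p(j+1)\le\log_2(j+1)$ is dominated by $(j-1)v$ as soon as $j$ is not too small.

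The main obstacle will be a clean treatment of the small-$j$ boundary cases, where $(j-1)v$ is too small to absorb $v_p(j+1)$ on its own. This is eased by the vanishing of the odd-index Bernoulli numbers: since $k$ is even, $B_{k-j}=0$ for every odd $j$ except $j=k-1$, so only even $j$ and the single term $j=k-1$ survive. For $j=k-1$ one has $B_1=-1/2$ and the binomial factor $\binom{k}{k-1}=k$ provides the needed $p$-power, while for the smallest even index $j=2$ (the delicate case $p=3$, $v=1$) the divisibility $(3-1)\mid k$ gives $3\mid V_k$ and closes $(\ast)$. Verifying $(\ast)$ in these residual cases—by carefully combining the contributions of $m^{j+1}$, of $\binom kj$, and of $V_k$—is the only genuinely fiddly point. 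Once it is settled, summing over $j$ and then over all primes $p\mid m$ gives $m^2\mid V_kS_k(m)-U_km$, which is the claim.
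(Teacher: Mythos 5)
Your plan is sound, and it is worth noting at the outset that the paper itself does not prove this lemma: it is recorded as a known fact with a pointer to Ireland--Rosen, so there is no internal proof to compare against. Your derivation from Lemma~\ref{lem:summation} and Lemma~\ref{lem:clausen} is the standard self-contained route to this form of the Voronoi congruence, and the case analysis you sketch does close. Concretely: after isolating the $j=0$ term, the only surviving terms are even $j\ge 2$ (plus $j=k-1$, where $\binom{k}{k-1}=k$ exactly cancels the denominator $j+1=k$ and $2\mid V_k$ handles $B_1=-1/2$); for even $j$ one has $v_p(j+1)=0$ at $p=2$, so the worst case is $p$ odd with $v_p(B_{k-j})=-1$, where the required inequality $(j-1)v\ge 1+v_p(j+1)$ holds for all $j\ge 4$ since $v_p(j+1)\le\log_3(j+1)$, and the single tight case $j=2$, $p=3$ is rescued by $2\mid k\Rightarrow 3\mid V_k$, exactly as you say. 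Two small corrections to the prose: the deficit from $v_p(B_{k-j})=-1$ is absorbed by $(j-1)v\ge 1$ rather than by $\binom kj$ (the binomial coefficient is only genuinely needed in the $j=k-1$ term), and the crude bound should be $v_p(j+1)\le\log_p(j+1)$ with $p\ge 3$ in the dangerous cases, since $\log_2(j+1)$ alone does not beat $j-1$ at $j=2$. What remains is only to write out the inequality $(\ast)$ verification in full; the argument itself has no gap.
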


The following lemma gives a refinement of the Voronoi congruence (see \cite[Proposition~8.5]{Kellner2007}). 

\begin{Lem}
\label{lem:kellner}
Let $k$ and $m$ be positive integers, where $m\ge 2$ and $k\ge 6$ is even. If $m\mid U_k$ then $V_kS_k(m)\equiv U_k m\pmod{m^3}$.
\end{Lem}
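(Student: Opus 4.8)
The plan is to start from the Bernoulli expansion of the power sum in Lemma~\ref{lem:summation}, multiply through by $V_k$, and show that every term past the leading one is divisible by $m^3$. Writing $\ord_p$ for the $p$-adic valuation, I would use
$$V_kS_k(m)=\sum_{j=0}^{k}\binom kj V_kB_{k-j}\frac{m^{j+1}}{j+1},$$
note that the $j=0$ term equals $V_kB_km=U_km$, and that the $j=1$ term vanishes because $B_{k-1}=0$ (here $k-1\ge5$ is odd). In fact every term with odd $j\le k-3$ drops out for the same reason. So the task reduces to proving that $T_j:=\binom kj V_kB_{k-j}m^{j+1}/(j+1)$ satisfies $\ord_p(T_j)\ge3\ord_p(m)$ for every prime $p\mid m$ and every $j\ge2$.

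The first thing I would extract from the hypothesis $m\mid U_k$ is a strong constraint on the primes dividing $m$. Since $\gcd(U_k,V_k)=1$, no prime $p\mid m$ can divide $V_k$; by von Staudt--Clausen (Lemma~\ref{lem:clausen}) this forces $(p-1)\nmid k$ for every $p\mid m$. In particular $p\ge5$ (the values $p\in\{2,3\}$ would give $(p-1)\mid k$ for even $k$), $\ord_p(V_k)=0$, and, since $p\mid U_k$, also $\ord_p(B_k)=\ord_p(U_k)\ge1$. With $\ord_p(V_k)=0$ and $p\ge5$ in hand, the terms $T_j$ with $j\ge4$, together with $j=k-1$ (where $B_{k-j}=B_1=-1/2$) and $j=k$, are disposed of by a routine valuation count: the surplus $(j-2)\ord_p(m)\ge2$ in the power of $m$ dominates the loss $1+\ord_p(j+1)$ that the denominators $V_{k-j}$ and $j+1$ can introduce.

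The crux is the term $j=2$, namely $T_2=\binom k2 V_kB_{k-2}m^3/3$; here the three powers of $m$ are exactly what is available, so I must show that $C:=\binom k2 V_kB_{k-2}/3$ is $p$-integral for each $p\mid m$. Since $p\ge5$ and $\ord_p(V_k)=0$, this amounts to $\ord_p(\binom k2)-\ord_p(V_{k-2})+\ord_p(U_{k-2})\ge0$. If $(p-1)\nmid(k-2)$ then $\ord_p(V_{k-2})=0$ and the claim is immediate. The remaining case $(p-1)\mid(k-2)$ is where the hypothesis does its real work: then $\ord_p(V_{k-2})=1$, so one compensating factor of $p$ is needed. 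Because $(p-1)\nmid k$ and $k\equiv2\pmod{p-1}$, the Kummer congruence (Lemma~\ref{lem:kummer}) gives $B_k/k\equiv B_2/2=1/12\pmod p$; as $\ord_p(B_k)\ge1$ while $\ord_p(1/12)=0$, this is only consistent if $\ord_p(k)=\ord_p(B_k)\ge1$, and since $p\ge5$ we then get $\ord_p(\binom k2)=\ord_p(k)\ge1$, exactly offsetting the $-1$ coming from $V_{k-2}$.

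I expect this final case to be the only genuine obstacle, as it is the single place where the refinement really uses $m\mid U_k$ — both through $\ord_p(B_k)\ge1$ and through the coprimality $\gcd(m,V_k)=1$ that clears the stray $1/3$. Once $\ord_p(T_j)\ge3\ord_p(m)$ is established for all $p\mid m$ and all $j\ge2$, summing the congruences over $j$ yields $V_kS_k(m)-U_km\equiv0\pmod{m^3}$, which is the assertion.
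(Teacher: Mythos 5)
Your argument is correct. The paper itself gives no proof of this lemma --- it is quoted from Kellner \cite[Proposition~8.5]{Kellner2007} --- so there is no in-paper proof to compare against; but your derivation from Lemma~\ref{lem:summation} is essentially the same valuation analysis that the authors later carry out inside the proof of Theorem~\ref{thm:t4}, where the troublesome $\frac{k-1}{6}B_{k-2}m^3$ term and the $j\ge 4$ tail are estimated in just the way you do. All the key steps check out: $m\mid U_k$ and $\gcd(U_k,V_k)=1$ force $(p-1)\nmid k$, hence $p\ge 5$ and $\ord_p V_k=0$ for every $p\mid m$; the odd-index Bernoulli numbers kill the terms with odd $j\le k-3$; for $j\ge 4$ the inequality $(j-2)\ord_p m\ge 1+\ord_p(j+1)$ holds because $\ord_p(j+1)=t\ge 1$ forces $j\ge 5^t-1$; and in the only delicate case $(p-1)\mid(k-2)$ the Kummer congruence $B_k/k\equiv 1/12\pmod p$ together with $\ord_p B_k\ge\ord_p m\ge 1$ yields $\ord_p k\ge 1$, which exactly cancels the single factor of $p$ in $V_{k-2}$. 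One could quibble that the Kummer congruence as stated presupposes the $p$-integrality of $B_k/k$ for $(p-1)\nmid k$, which you use implicitly to conclude $\ord_p(B_k/k)=0$, but that is standard and consistent with how the paper itself invokes Lemma~\ref{lem:kummer}. What your route buys is a self-contained proof from the lemmas already listed in Section~\ref{preli}, at the cost of redoing estimates that Kellner's cited proposition packages once and for all.
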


A prime $p$ will be called \emph{regular} if it does not divide any
of the numerators $U_r$ with even $r\le p-3$, otherwise it is said to be \emph{irregular}. The pairs $(r,p)$ with $p\mid U_r$ and even $r\le p-3$ are called \emph{irregular pairs}. 
At a first glance this looks like a strange definition, but by celebrated work
of Kummer (1850) \cite{Kummer} can be reformulated as: a prime $p$ is irregular
if and only if it divides the class number of $\mathbb Q(\zeta_p)$.
The first
few irregular primes are $37, 59, 67, 101, 103, 131, 149,\ldots.$ It is known that
there are infinitely many irregular primes, cf.~Carlitz~\cite{car}. It is not known whether there are infinitely many
regular primes. Let $\pi_{\iota}(x)$ denote
the number of irregular primes $p\le x$.
Recently Luca et al.~\cite[Theorem~1]{LPP} showed that 
$$\pi_{\iota}(x)\ge (1+o(1))\,\frac{\log \log x}{\log \log \log x},\quad x\rightarrow \infty.$$
Conjecturally, cf.~Siegel~\cite{Siegel}, and in good agreement with numerical work, we
should have 
$$\pi_{\iota}(x)\sim \Bigl(1-\frac 1{\sqrt{e}}\Bigr)\pi(x)\approx 0.3935\, \frac x{\log x}.$$
Let $N_{\iota}(x)$ denote the number of integers $n\le x$ that are composed only of irregular primes.
If we assume that 
\begin{equation}
\pi_{\iota}(x)\sim \delta\, \frac x{\log x},\quad 0<\delta<1,
\end{equation}
then by Moree~\cite[Theorem~1]{LvR} we have $N_{\iota}(x)\sim c x\log^{\delta-1}x$ as $x\rightarrow \infty$, with $c$ a positive real constant.
(Kummer conjectured that $\delta=1/2$.) The latter result is of Wirsing type (cf.~Schwarz
and Spilker~\cite[65-76]{SS}).

For more results on Bernoulli numbers see e.g. the book by Arakawa et al.~\cite{AIK}.

\section{The Kellner-Erd\H{o}s-Moser conjecture}
\label{sec:kell}
In this section, we will use properties of Bernoulli numbers to study the non-trivial solutions
of the equation $aS_k(m)=m^k$. This will then lead us to establish Theorem~\ref{thm:t4}. As a bonus we will conclude that
if $aS_k(m)=m^k$ has non-trivial solutions, then $a$ must be either 1 or a product of irregular primes.

First assume that $m=2$. Then $a=2^k$. Next assume that $m=3$. Then we must have $a(1+2^k)=3^k$ and hence
$a=3^e$ for some $e\le k$. It follows that $1+2^k=3^{k-e}$. This Diophantine equation was
already solved by the famous medieval astronomer Levi ben Gerson (1288-1344), alias Leo Hebraeus, who
showed that 8 and 9 are the only consecutive integers in the sequence of powers of 2 and 3, see
Ribenboim~\cite[pp. 124-125]{Ribenboim}. This leads to the solutions $(e,k)\in \{(0,1),(1,3)\}$ and hence
$(a,m,k)\in \{(1,3,1),(3,3,3)\}$. Now assume that $m\ge 4$ and $k$ is odd. Then by Lemma~\ref{lem:staudt} we find that $m(m-1)/2$ divides $m^k$, which is impossible. We infer that, to 
establish Conjecture~\ref{kellie}, it is enough to establish the following conjecture.
\begin{Con}
\label{kellie2}
The set
$$
{\EuScript A}=\{a\ge 1: aS_k(m)=m^k{\rm ~has~a~solution~with~} m\ge 4~{\rm and}~k~{\rm even}\}
$$
is empty.
\end{Con}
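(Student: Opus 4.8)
The plan is not to settle $\EuScript A=\varnothing$ outright---that would prove the full Kellner--Erd\H{o}s--Moser conjecture, and since $a=1$ is the still-open Erd\H{o}s--Moser equation $S_k(m)=m^k$, a complete proof is out of reach---but rather to set up a program of reductions that cuts $\EuScript A$ down to a thin, hard core. Throughout we may take $m\ge 4$ and $k\ge 2$ even, the other cases having just been settled. The first step is to extract a divisibility relation from the Voronoi congruence (Lemma~\ref{lem:voronoi}): writing $V_kS_k(m)=U_km+cm^2$ and multiplying $aS_k(m)=m^k$ through by $V_k$ gives $aU_km\equiv V_km^k\equiv 0\pmod{m^2}$, hence $m\mid aU_k$. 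This is the lever connecting the unknown solution to the arithmetic of Bernoulli numbers.

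The second, and crucial, step upgrades this into the assertion that every prime divisor of $a$ is irregular (essentially Theorem~\ref{thm:t4}). The extra ingredient is the crude size estimate $S_k(m)>(m-1)^k$, which for $m\ge 3$ forces $a=m^k/S_k(m)<\bigl(m/(m-1)\bigr)^k<2^k$, so that $v_p(a)<k\log_p 2\le k$ for every prime $p$. Now let $p\mid a$; then $p\mid m$ since $a\mid m^k$. If $(p-1)\mid k$ then $p\mid V_k$ by von Staudt--Clausen (Lemma~\ref{lem:clausen}), and the Carlitz--von Staudt formula (Lemma~\ref{lem:staudt}) yields $v_p(S_k(m))=v_p(m)-1$, whence $v_p(a)=(k-1)v_p(m)+1\ge k$, contradicting $v_p(a)<k$. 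Hence $(p-1)\nmid k$ and $p\nmid V_k$; the leading term of the Bernoulli expansion (Lemma~\ref{lem:summation}), once checked to dominate $p$-adically, gives $v_p(S_k(m))=v_p(U_k)+v_p(m)$, so $v_p(U_k)=(k-1)v_p(m)-v_p(a)$, which the bound $v_p(a)<k\log_p 2$ renders positive once $k$ is moderately large (the few small even $k$ being checked by hand). Thus $p\mid U_k$, and the Kummer congruence (Lemma~\ref{lem:kummer}) descends this to $p\mid U_r$ for the even residue $r\equiv k\pmod{p-1}$ with $2\le r\le p-3$, so $(r,p)$ is an irregular pair and $p$ is irregular. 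Since $p=2$ and $p=3$ both satisfy $(p-1)\mid k$, they never divide $a$, matching the convention that $2$ is regular. This excludes at one stroke every $a$ with a regular prime factor---all integers but a density-zero set.

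For the residual $a$, namely $1$ and the products of irregular primes, I would try to pin $k$ and $m$ down simultaneously. Helpful pairs use the sharpened Voronoi relation (Lemma~\ref{lem:kellner}) to show that $k\equiv c\pmod d$ is impossible for a large supply of even $c$ and moduli $d$; chaining these exclusions---the ``method of infinite ascent''---forces ever larger integers to divide $k$. Running this in parallel with a Moser-style elementary congruence analysis of $aS_k(m)=m^k$, each forced divisor of $k$ yields a lower bound on $m$, and the two growth demands are pushed towards a contradiction for each fixed product of irregular primes.

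The main obstacle is exactly the core left behind: $a=1$ is the Erd\H{o}s--Moser equation, where no prime-divisor argument can gain traction, so $\EuScript A=\varnothing$ is provably at least as hard as a famous open problem. I therefore expect the realistic output of this plan to be not a single theorem but a toolbox: an unconditional exclusion of all $a$ with a regular prime factor, a hand-checked exclusion of the finitely many small products of irregular primes, and---granting a standard density hypothesis on irregular primes---the emptiness of $\EuScript A$ for a set of $a$ of density $1$. The delicate point is that neither the helpful-pair search nor the ascent is guaranteed to terminate for a given product of irregular primes, so such $a$ must be treated one at a time rather than by a uniform argument.
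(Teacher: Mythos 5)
This statement is a conjecture that the paper itself does not (and cannot currently) prove --- as you correctly observe, the case $a=1$ is the open Erd\H{o}s--Moser equation --- and your proposal is essentially the paper's own program: exclude every $a$ with a regular prime divisor via the Voronoi congruence and Kummer-type arguments (Theorem~\ref{thm:t4}, Corollary~\ref{cor:c5}), treat the residual products of irregular primes one at a time with helpful pairs, and obtain density~$1$ only conditionally on a hypothesis about the counting function of irregular primes. The one place your sketch is materially looser than the paper is the irregularity step when $p\mid k$, where one needs $\ord_p(B_k/k)>0$ rather than merely $p\mid U_k$; the paper's proof of Theorem~\ref{thm:t4}(a) devotes the estimates \eqref{order1}--\eqref{orderr3} to exactly this point.
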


\begin{Lem}
\label{lem:l6}
Suppose that $aS_k(m)=m^k$ with $m\ge 4$ and $k$ even. If $p$ is a prime dividing $m$, then $(p-1)\nmid k$.
\end{Lem}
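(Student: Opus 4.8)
The plan is to argue by contradiction: suppose $p \mid m$ and $(p-1) \mid k$, and derive an impossible divisibility. The natural tool is the Carlitz--von Staudt formula (Lemma~\ref{lem:staudt}), which for even $k$ computes $S_k(m)$ modulo $m$ precisely in terms of those primes $p \mid m$ with $(p-1) \mid k$. First I would write $m = p^e t$ with $p \nmid t$ and $e \ge 1$, and study the $p$-adic valuation of both sides of $a S_k(m) = m^k$. The right-hand side has $\ord_p(m^k) = ek$, which is large. So the left-hand side must be highly divisible by $p$ as well, and since $S_k(m)$ is an integer, I would like to show that under the assumption $(p-1) \mid k$ the power sum $S_k(m)$ is in fact \emph{not} divisible by $p$ (or is divisible only to a controlled, insufficient extent), forcing a contradiction once one checks that $a$ cannot absorb all the needed factors of $p$.

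Concretely, the key step is to evaluate $S_k(m) \bmod p$ using the grouping of the summands $1^k + \cdots + (m-1)^k$ into residue classes modulo $p$. Since $(p-1) \mid k$, Fermat's little theorem gives $j^k \equiv 1 \pmod p$ for every $j$ with $p \nmid j$, while $j^k \equiv 0$ for $p \mid j$. Among $1, \dots, m-1$ there are exactly $m - 1 - \lfloor (m-1)/p \rfloor$ terms coprime to $p$, and I would compute this count modulo $p$; writing $m = p^e t$ one finds the number of such terms is $\equiv -1 \pmod p$ (this matches the $-m/p$-type contribution in Lemma~\ref{lem:staudt}). Hence $S_k(m) \equiv -1 \pmod p$ — in particular $p \nmid S_k(m)$. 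Then in $a S_k(m) = m^k$ the full factor $p^{ek}$ on the right must be supplied by $a$; but one must rule this out. The clean way is to observe directly from Lemma~\ref{lem:staudt}: the stated congruence $S_k(m) \equiv -\sum_{q \mid m,\,(q-1)\mid k} m/q \pmod m$ shows that $\ord_p(S_k(m))$ is \emph{strictly less} than $\ord_p(m)$ whenever $(p-1)\mid k$, because the term $m/p$ appears in the sum and contributes valuation $e-1$, while all other terms $m/q$ and the modulus $m$ have valuation $\ge e$. Thus $\ord_p\bigl(S_k(m)\bigr) = e - 1$ exactly.

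From $\ord_p(S_k(m)) = e-1$ I would finish as follows. Taking $\ord_p$ of $a S_k(m) = m^k$ gives $\ord_p(a) + (e-1) = ek$, so $\ord_p(a) = ek - e + 1 = e(k-1) + 1$. This is not yet a contradiction by itself, so the real content is to pair it with the global constraint coming from comparing $a$ against $m$. The cleanest route is to use the inequality $a S_k(m) = m^k$ together with the crude bound $S_k(m) > (m-1)^k \ge (m/2)^k$ for $m \ge 4$, which forces $a < m^k / (m/2)^k = 2^k$, hence $\ord_p(a) \le \log_p a < k \log_p 2 \le k$; comparing with $\ord_p(a) = e(k-1)+1 \ge k-1+1 = k$ (using $e \ge 1$) gives $\ord_p(a) \ge k > \ord_p(a)$, a contradiction. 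I expect the main obstacle to be pinning down the valuation statement $\ord_p(S_k(m)) = e-1$ rigorously from Lemma~\ref{lem:staudt}: one must verify that the single term $m/p$ genuinely has the minimal $p$-adic valuation $e-1$ among all contributions (including the implicit modulus $m$), so that no cancellation can raise the valuation. Once that valuation is secured, the size estimate closes the argument cleanly.
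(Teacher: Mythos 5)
Your argument is correct and is essentially the paper's own proof: both extract $\ord_p(S_k(m))=e-1$ from the Carlitz--von Staudt congruence (the term $m/p$ being the unique contribution of minimal valuation), deduce $\ord_p(a)=ek-(e-1)=e(k-1)+1\ge k$ from $aS_k(m)=m^k$, and contradict this with the size bound $a<2^k$ forced by $S_k(m)>(m-1)^k$. The only blemish is your intermediate claim that $S_k(m)\equiv -1\pmod p$, which holds just for $e=1$ (for $e\ge 2$ the count of units gives $S_k(m)\equiv 0\pmod p$), but your final valuation argument does not rely on it, so nothing is lost.
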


\begin{proof}
Assume that $p\mid m$ and $(p-1)\mid k$. Let $p^e\parallel m$ and $p^f\parallel a$, where $e\ge 1$, $f\ge 0$. Using Lemma~\ref{lem:staudt} we find that 
$S_k(m)\equiv \frac m{p^e}S_k(p^e)\equiv -\frac mp\pmod{p^e}$. Hence $aS_k(m)\equiv-\frac{am}p\pmod{p^{e+f}}$, and so $m^k\equiv-\frac{am}p\pmod{p^{e+f}}$. Since $p^{e+f-1}\parallel\frac{am}p$, we deduce that $p^{e+f-1}\parallel m^k$, and thus $f=(k-1)e+1\ge k$. Then $a\ge p^k\ge 2^k$ and $aS_k(m)>2^k(m-1)^k>m^k$, which contradicts the fact that $aS_k(m)=m^k$.
\end{proof}

\begin{cor}
\label{cor:c1}
Suppose that $aS_k(m)=m^k$ with $m\ge 4$ and $k$ even, then we have $\gcd(m,6)=1$.
\end{cor}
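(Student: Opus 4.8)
The plan is to deduce the corollary directly from Lemma~\ref{lem:l6}, which asserts that under the stated hypotheses no prime divisor $p$ of $m$ can satisfy $(p-1)\mid k$. Since the condition $\gcd(m,6)=1$ is equivalent to the conjunction of $2\nmid m$ and $3\nmid m$, it suffices to rule out the two primes $p=2$ and $p=3$ as divisors of $m$, and each of these will follow by inspecting the value of $p-1$.

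First I would treat $p=2$. Suppose, for contradiction, that $2\mid m$. Applying Lemma~\ref{lem:l6} with $p=2$, we have $p-1=1$, and $1\mid k$ holds for every integer $k$; thus $(p-1)\mid k$, contradicting the conclusion of the lemma. Hence $2\nmid m$. Next I would treat $p=3$ in the same way. If $3\mid m$, then taking $p=3$ gives $p-1=2$, and since $k$ is even by hypothesis we have $2\mid k$, so again $(p-1)\mid k$, contradicting Lemma~\ref{lem:l6}. Hence $3\nmid m$.

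Combining the two cases, $m$ is divisible by neither $2$ nor $3$, and therefore $\gcd(m,6)=1$. There is no genuine obstacle here: the only thing to notice is that the two smallest primes are precisely the ones for which $p-1$ (equal to $1$ and $2$, respectively) automatically divides any even $k$, so Lemma~\ref{lem:l6} excludes them with no further work. The hypothesis that $k$ is even is used only to handle $p=3$.
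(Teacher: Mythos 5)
Your proof is correct and is precisely the intended argument: the paper states this as an immediate corollary of Lemma~\ref{lem:l6} without writing out a proof, and your deduction (exclude $p=2$ since $1\mid k$ always, and $p=3$ since $2\mid k$ by the evenness hypothesis) is exactly what is meant.
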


\begin{cor}
\label{cor:c2}
If $\gcd(a,6)\ne 1$, then $a\not\in \EuScript A$.
\end{cor}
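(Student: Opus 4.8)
The plan is to argue by contraposition, reducing everything to the already-established Corollary~\ref{cor:c1}. Suppose that $a\in\EuScript A$; I want to conclude that $\gcd(a,6)=1$. By definition of $\EuScript A$, there is a solution $(m,k)$ of $aS_k(m)=m^k$ with $m\ge 4$ and $k$ even. Corollary~\ref{cor:c1} applies to exactly this situation and yields $\gcd(m,6)=1$.

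Next I would read off the divisibility from the equation itself. Since $aS_k(m)=m^k$ and $S_k(m)$ is a positive integer, $a$ divides $m^k$. From $\gcd(m,6)=1$ it follows that $\gcd(m^k,6)=1$, and as $a\mid m^k$ this forces $\gcd(a,6)=1$. Thus every $a\in\EuScript A$ is coprime to $6$, which is precisely the contrapositive of the claim: if $\gcd(a,6)\ne 1$ then $a\notin\EuScript A$.

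There is essentially no obstacle here, since all the real work has been done in Lemma~\ref{lem:l6} and Corollary~\ref{cor:c1}; the only point to keep straight is the logical direction. Corollary~\ref{cor:c1} is a statement about the \emph{modulus} $m$ of a hypothetical solution, whereas the present corollary is a statement about the \emph{coefficient} $a$, and the bridge between the two is simply the divisibility $a\mid m^k$ coming from the equation. Once that link is made explicit, the conclusion is immediate.
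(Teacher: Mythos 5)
Your proof is correct and follows the same route the paper intends (the paper leaves this corollary without an explicit proof precisely because it is immediate): from $aS_k(m)=m^k$ one gets $a\mid m^k$, and Corollary~\ref{cor:c1} gives $\gcd(m,6)=1$, so $\gcd(a,6)=1$. Nothing further is needed.
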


\begin{Lem}
\label{lem:l7}
Suppose that $aS_k(m)=m^k$ with $m\ge 4$ and $k$ even. 
Suppose that $a$ has no prime divisor $p$ satisfying $p<2^s$.
Then $a\mid m^{\lceil{k/s}\rceil -1}$.
\end{Lem}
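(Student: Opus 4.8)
The plan is to argue by contradiction via a crude size estimate, in the same spirit as the final step of the proof of Lemma~\ref{lem:l6}, without invoking any Bernoulli machinery. First I would record the elementary divisibility fact that confines the primes of $a$ to $m$: since $S_k(m)$ is a positive integer and $aS_k(m)=m^k$, we have $a\mid m^k$, so every prime $p\mid a$ divides $m$. Writing $e=v_p(m)\ge 1$ and $f=v_p(a)$, the desired conclusion $a\mid m^{\lceil k/s\rceil-1}$ is equivalent to the pointwise bound $f\le(\lceil k/s\rceil-1)e$ for every prime $p\mid a$, because then $a=\prod_{p\mid a}p^{f}$ divides $\prod_{p\mid a}p^{(\lceil k/s\rceil-1)e}\mid m^{\lceil k/s\rceil-1}$.

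Next I would suppose this fails for some prime $p\mid a$, so that $f\ge(\lceil k/s\rceil-1)e+1$, and set $N=\lceil k/s\rceil$. Invoking the hypothesis $p\ge 2^s$ together with $e\ge 1$ and $N\ge 1$, I would estimate
\[
a\ge p^f\ge(2^s)^{(N-1)e+1}=2^{s((N-1)e+1)}\ge 2^{sN}\ge 2^k,
\]
where the penultimate inequality uses $(N-1)e+1\ge N$ (valid since $e\ge 1$ and $N-1\ge 0$) and the last uses $sN=s\lceil k/s\rceil\ge k$.

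Finally I would extract the contradiction from $a\ge 2^k$ exactly as in Lemma~\ref{lem:l6}: since $m\ge 4$ we have $S_k(m)>(m-1)^k$ and $2(m-1)>m$, whence
\[
aS_k(m)>2^k(m-1)^k=(2(m-1))^k>m^k,
\]
contradicting $aS_k(m)=m^k$. Therefore $f\le(N-1)e$ for every prime $p\mid a$, which gives $a\mid m^{\lceil k/s\rceil-1}$.

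I would expect no serious obstacle: the only delicate point is the chain of exponent inequalities $(N-1)e+1\ge N$ and $s\lceil k/s\rceil\ge k$, which must be verified to hold uniformly, including the degenerate case $\lceil k/s\rceil=1$ (where the claim reduces to forcing $a=1$). Note that the parity hypothesis ``$k$ even'' is not actually needed for this argument; it is merely inherited from the standing reduction to Conjecture~\ref{kellie2}.
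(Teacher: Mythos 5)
Your proof is correct and follows essentially the same route as the paper: both arguments reduce the failure of $a\mid m^{\lceil k/s\rceil-1}$ to the existence of a prime $p\mid a$ with $p^{\lceil k/s\rceil}\mid a$, deduce $a\ge (2^s)^{\lceil k/s\rceil}\ge 2^k$, and contradict $aS_k(m)=m^k$ via $2^k(m-1)^k>m^k$ for $m\ge 4$. Your only additions are making the exponent bookkeeping $f\ge (N-1)e+1\ge N$ explicit and noting that the parity of $k$ is not used, both of which are accurate.
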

\begin{proof}
Assume that $a\nmid m^{\lceil{k/s}\rceil-1}$. Since each prime divisor of $a$ divides $m$, there exists a prime $p$ such that $p^{\lceil{k/s}\rceil}\mid a$. By assumption $p\ge 2^s$. Then $a\ge (2^s)^{\lceil{k/s}\rceil}\ge 2^k$ and $aS_k(m)\ge 2^k(m-1)^k>m^k$, which is a contradiction.
\end{proof}

On combining the latter lemma and Corollary~\ref{cor:c2} we obtain the following result.
\begin{cor}
\label{cor:s=2}
Suppose that $aS_k(m)=m^k$ with $m\ge 4$ and $k$ even, then
$a\mid m^{(k-2)/2}$.
\end{cor}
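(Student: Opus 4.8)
The plan is to obtain this as a direct combination of Corollary~\ref{cor:c2} and Lemma~\ref{lem:l7}, the only genuine choice being the value of the parameter $s$. First I would note that the hypothesis furnishes a solution of $aS_k(m)=m^k$ with $m\ge 4$ and $k$ even, so by definition $a\in\EuScript A$. Reading Corollary~\ref{cor:c2} in contrapositive form, membership $a\in\EuScript A$ forces $\gcd(a,6)=1$. In particular $a$ is divisible by neither $2$ nor $3$, so $a=1$ or every prime divisor of $a$ is at least $5$.

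Next I would feed this coprimality into Lemma~\ref{lem:l7} with the choice $s=2$. Indeed, since every prime divisor $p$ of $a$ satisfies $p\ge 5>4=2^2$, the hypothesis of Lemma~\ref{lem:l7} holds: $a$ has no prime divisor $p<2^s$ for $s=2$. The lemma then yields $a\mid m^{\lceil k/2\rceil-1}$. Finally, because $k$ is even we have $\lceil k/2\rceil=k/2$, and the exponent collapses to $k/2-1=(k-2)/2$, giving the claimed $a\mid m^{(k-2)/2}$.

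There is no real obstacle here, as the statement is a bookkeeping consequence of two results already in hand; the only point deserving a moment's care is that $s=2$ is forced to be the correct uniform choice. It is the largest $s$ for which the mere coprimality $\gcd(a,6)=1$ guarantees no prime divisor of $a$ below $2^s$ (one has $2^2=4\le 5$ but $2^3=8>5$), and it is precisely the evenness of $k$ that lets us drop the ceiling in Lemma~\ref{lem:l7}. The degenerate case $a=1$ is subsumed, since then $a\mid m^{(k-2)/2}$ holds trivially.
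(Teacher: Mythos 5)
Your proof is correct and matches the paper's own argument, which likewise obtains the corollary by combining Corollary~\ref{cor:c2} (giving $\gcd(a,6)=1$) with Lemma~\ref{lem:l7} applied with $s=2$. The simplification $\lceil k/2\rceil-1=(k-2)/2$ for even $k$ is exactly the intended bookkeeping.
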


\begin{Lem}
\label{lem:l8}
Suppose that $aS_k(m)=m^k$ with $m\ge 4$ and $k$ even, then $m\mid U_k$.
\end{Lem}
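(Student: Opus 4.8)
The plan is to extract the conclusion $m\mid U_k$ from the Voronoi congruence (Lemma~\ref{lem:voronoi}), $V_kS_k(m)\equiv U_km\pmod{m^2}$, by showing that the left-hand side vanishes modulo $m^2$. For this it suffices to prove the apparently much stronger divisibility $m^2\mid S_k(m)$, which should fall out cheaply from the size constraints already imposed on $a$ in the previous corollaries.

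First I would invoke Corollary~\ref{cor:s=2}, which gives $a\mid m^{(k-2)/2}$. Writing $S_k(m)=m^k/a$, this divisibility is exactly equivalent to $m^{(k+2)/2}\mid S_k(m)$, since $a\mid m^{(k-2)/2}$ holds precisely when $a$ divides $m^k/m^{(k+2)/2}=m^{(k-2)/2}$. As $k$ is a positive even integer we have $(k+2)/2\ge 2$, so in particular $m^2\mid m^{(k+2)/2}\mid S_k(m)$.

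Next I would feed this into the Voronoi congruence. Since $m\ge 4\ge 2$ and $k\ge 2$ is even, Lemma~\ref{lem:voronoi} applies and yields $V_kS_k(m)\equiv U_km\pmod{m^2}$. The relation $m^2\mid S_k(m)$ just established forces $V_kS_k(m)\equiv 0\pmod{m^2}$, whence $U_km\equiv 0\pmod{m^2}$, that is, $m\mid U_k$, as desired.

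There is no genuine obstacle here beyond assembling the right ingredients: the only conceptual step is recognizing that the equation $aS_k(m)=m^k$ together with the bound $a\mid m^{(k-2)/2}$ pins down a large power of $m$ in $S_k(m)$, far exceeding the single factor of $m^2$ that the Voronoi congruence needs in order to transfer divisibility from $S_k(m)$ onto $U_k$. One should merely take care that the hypotheses of Lemma~\ref{lem:voronoi} ($m\ge 2$, $k\ge 2$ even) and of Corollary~\ref{cor:s=2} are all met, which they are under the standing assumptions $m\ge 4$ and $k$ even.
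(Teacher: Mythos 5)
Your proof is correct and is essentially the paper's argument: both rest on Corollary~\ref{cor:s=2} combined with the Voronoi congruence, the key point being that $a\mid m^{(k-2)/2}$ forces $am^2\mid m^k=aS_k(m)$. The paper multiplies the Voronoi congruence by $a$ and works modulo $am^2$, whereas you first isolate $m^2\mid S_k(m)$ and stay modulo $m^2$ --- a purely cosmetic difference.
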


\begin{proof}
Multiplying the Voronoi congruence by $a$ and using the fact that $aS_k(m)=m^k$, we 
deduce that $V_km^k\equiv U_k am\pmod{am^2}$. Since $m^2\mid m^{(k+2)/2}$ and, by Corollary~\ref{cor:s=2}, $a\mid m^{(k-2)/2}$, we have $am^2\mid U_kam$, that is $m\mid U_k$.
\end{proof}

\begin{cor}
\label{cor:c3}
Suppose that $aS_k(m)=m^k$ with $m\ge 4$ and $k$ even, then $k\ge 10$.
\end{cor}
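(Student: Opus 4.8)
The plan is to deduce this immediately from Lemma~\ref{lem:l8}, which already does all the real work: any solution with $m\ge 4$ and $k$ even satisfies $m\mid U_k$, where $U_k$ is the numerator of the Bernoulli number $B_k$. Since $m\ge 4$, this divisibility can only hold if $|U_k|\ge 4$, so I would rule out the finitely many even $k<10$ by checking that their Bernoulli numerators are too small.

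First I would record the relevant small even-indexed Bernoulli numbers, namely
$$
B_2=\frac16,\quad B_4=-\frac1{30},\quad B_6=\frac1{42},\quad B_8=-\frac1{30},
$$
and observe that in each of these four cases the numerator satisfies $|U_k|=1$. Next, arguing by contradiction, I would assume $k<10$; since $k$ is even this leaves only $k\in\{2,4,6,8\}$. For each such value Lemma~\ref{lem:l8} yields $m\mid U_k$, hence $m\mid 1$ and so $m=1$, contradicting the hypothesis $m\ge 4$. (One may note that Corollary~\ref{cor:c1} already forces $m\ge 5$, but the cruder bound $m\ge 4$ suffices here.) Therefore $k\ge 10$, as claimed.

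There is essentially no obstacle to overcome at this stage: once Lemma~\ref{lem:l8} has been established, the corollary is a one-line finite check against the standard table of Bernoulli numerators. The only genuine content lives in the earlier reduction (via the Voronoi congruence and Corollary~\ref{cor:s=2}) that produced the constraint $m\mid U_k$; the corollary simply cashes this constraint in for the smallest admissible even exponents, where the numerators happen all to equal $\pm1$.
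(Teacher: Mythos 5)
Your proof is correct and matches the paper's intended argument: the corollary is stated without proof immediately after Lemma~\ref{lem:l8}, precisely because $U_2=U_4=U_6=U_8=\pm 1$ makes $m\mid U_k$ impossible for $m\ge 4$ and even $k<10$. Your finite check of the Bernoulli numerators is exactly the implicit justification (compare the paper's remark before Corollary~\ref{cor:c4}, which uses the same kind of numerator inspection).
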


\begin{Lem}
\label{lem:l10}
Suppose that $aS_k(m)=m^k$ with $m\ge 4$ and $k$ even, then $m^2\mid U_k$.
\end{Lem}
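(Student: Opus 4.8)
The plan is to bootstrap the proof of Lemma~\ref{lem:l8}, simply replacing the ordinary Voronoi congruence (Lemma~\ref{lem:voronoi}, valid modulo $m^2$) by its refinement Lemma~\ref{lem:kellner} (valid modulo $m^3$). The point is that this refinement has the extra hypothesis $m\mid U_k$, and that hypothesis is exactly what Lemma~\ref{lem:l8} has just supplied. So the very conclusion of the previous lemma unlocks a sharper congruence, and feeding it through the same manipulation should upgrade $m\mid U_k$ to $m^2\mid U_k$.

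First I would check that Lemma~\ref{lem:kellner} is applicable. By Corollary~\ref{cor:c3} we have $k\ge 10\ge 6$, and by Lemma~\ref{lem:l8} we have $m\mid U_k$, so the lemma yields
$$
V_kS_k(m)\equiv U_k m\pmod{m^3}.
$$
Multiplying this by $a$ and substituting $aS_k(m)=m^k$ turns it into
$$
V_k m^k\equiv U_k a m\pmod{a m^3}.
$$

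The next step is the exponent bookkeeping that shows the left-hand side is divisible by $am^3$. By Corollary~\ref{cor:s=2} we have $a\mid m^{(k-2)/2}$, hence $am^3\mid m^{(k-2)/2+3}=m^{(k+4)/2}$; and since $k\ge 4$ we have $(k+4)/2\le k$, so $am^3\mid m^k\mid V_k m^k$. Therefore $V_k m^k\equiv 0\pmod{am^3}$, which combined with the displayed congruence gives $am^3\mid U_k a m$. Cancelling the common factor $am$ yields $m^2\mid U_k$, as claimed.

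I do not expect a genuine obstacle here: the argument is a near-verbatim repeat of Lemma~\ref{lem:l8} one power of $m$ higher. The only thing to watch is the final divisibility $am^3\mid V_k m^k$, i.e. that raising the modulus from $m^2$ to $m^3$ does not outrun the room provided by $a\mid m^{(k-2)/2}$ together with $m^k$; the inequality $k\ge 4$ (a fortiori $k\ge 10$) is what guarantees this, so the conclusion of the stronger Voronoi congruence survives intact after multiplying by $a$.
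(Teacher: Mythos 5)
Your argument is correct and is essentially identical to the paper's: the authors likewise invoke Corollary~\ref{cor:c3} to get $k\ge 10$, feed the conclusion $m\mid U_k$ of Lemma~\ref{lem:l8} into Lemma~\ref{lem:kellner}, and repeat the multiplication-by-$a$ argument of Lemma~\ref{lem:l8} one power of $m$ higher to deduce $am^3\mid U_kam$. Your explicit check that $am^3\mid m^{(k+4)/2}\mid V_km^k$ is exactly the bookkeeping the paper leaves implicit in the phrase ``proceeding by the same argument.''
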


\begin{proof}
By Corollary~\ref{cor:c3}, $k\ge 10$. Using Lemma~\ref{lem:kellner} instead of the Voronoi congruence and proceeding then by the same argument as in the proof of Lemma~\ref{lem:l8}, we deduce 
that $am^3\mid U_kam$, and so $m^2\mid U_k$.
\end{proof}

Since $U_k$ is square-free for any even $k<50$, we have
\begin{cor}
\label{cor:c4}
Suppose that $aS_k(m)=m^k$ with $m\ge 4$ and $k$ even, then $k\ge 50$.
\end{cor}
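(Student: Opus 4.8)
The plan is to combine Lemma~\ref{lem:l10} with the square-freeness fact recorded immediately before the statement. First I would invoke Lemma~\ref{lem:l10} to conclude that $m^2\mid U_k$. Since $m\ge 4$, the integer $m$ has at least one prime divisor $p$, and then $p^2\mid m^2\mid U_k$. In particular $U_k$ carries a genuine square prime factor, so it fails to be square-free.

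Next I would appeal to the stated fact that $U_k$ is square-free for every even $k<50$. This is a finite verification, amounting to inspecting the numerators of the Bernoulli numbers $B_r$ for even $r<50$; indeed, by Corollary~\ref{cor:c3} one already knows $k\ge 10$, so only the even values $k\in\{10,12,\ldots,48\}$ are genuinely in question. Since the hypotheses force $k$ to be even and, via the previous paragraph, force $U_k$ to be divisible by $p^2$, the index $k$ cannot lie in the range where $U_k$ is square-free. Therefore $k\ge 50$.

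The argument has essentially no obstacle beyond the bookkeeping: all the real work is already packaged in Lemma~\ref{lem:l10}, and the only external input is the square-freeness of the relevant Bernoulli numerators, which one reads off from a short table. The single point worth making explicit is that $m\ge 4$ guarantees $m^2>1$ and hence that $m$ contributes an honest prime square to $U_k$, so that divisibility by $m^2$ really does contradict square-freeness; were $m$ allowed to equal $1$ this implication would be vacuous, which is precisely why the hypothesis $m\ge 4$ is used.
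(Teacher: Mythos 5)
Your proposal is correct and matches the paper's argument exactly: the paper also deduces $k\ge 50$ by combining Lemma~\ref{lem:l10} ($m^2\mid U_k$) with the fact that $U_k$ is square-free for all even $k<50$. (A minor remark: any $m\ge 2$ would already supply the prime square, so the full strength of $m\ge 4$ is not needed at this particular step.)
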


In case $a=1$ the next result is Lemma~10 of \cite{MRU}.
\begin{Thm}
\label{thm:t4}
Suppose that $aS_k(m)=m^k$ with $m\ge 4$ and even $k$. Let $p$ be a prime dividing $m$. Then 

\noindent\textup{(a)} $p$ is an irregular prime;

\noindent\textup{(b)} $k\not\equiv 0,2,4,6,8,10,14\pmod{p-1}$;

\noindent\textup{(c)} $\ord_p (B_k/k)\ge 2\ord_p m\ge 2$;

\noindent\textup{(d)} $k\equiv r\pmod{p-1}$ for some irregular pair $(r,p)$. 
\end{Thm}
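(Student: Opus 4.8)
The plan is to establish (c) first and then deduce (a), (d) and (b) from it. Set $e=\ord_p m\ge 1$. Since $p\mid m$, Corollary~\ref{cor:c1} gives $\gcd(m,6)=1$, so $p\ge 5$, and Lemma~\ref{lem:l6} gives $(p-1)\nmid k$. By the von Staudt--Clausen theorem (Lemma~\ref{lem:clausen}) the prime $p$ does not divide $V_k$, because $(p-1)\nmid k$; hence $\ord_p B_k=\ord_p U_k$ and $\ord_p(B_k/k)=\ord_p U_k-\ord_p k$. Lemma~\ref{lem:l10} gives $m^2\mid U_k$, so $\ord_p U_k\ge 2e$ and therefore $\ord_p B_k\ge 2e$. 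Thus (c) is equivalent to the inequality $\ord_p U_k\ge 2e+\ord_p k$, i.e. to the statement that the factor $k$ does not erode the divisibility $m^2\mid U_k$ at $p$.

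The heart of the matter, and what I expect to be the main obstacle, is therefore to control $\ord_p k$; the cleanest target is to prove $p\nmid k$, which makes (c) immediate. My approach is to compare the $p$-adic size of $S_k(m)$ forced by the equation with its expansion from Lemma~\ref{lem:summation}. On the one hand, $a\mid m^{(k-2)/2}$ by Corollary~\ref{cor:s=2}, so $\ord_p S_k(m)=ke-\ord_p a\ge (k+2)e/2$, which is large because $k\ge 50$ by Corollary~\ref{cor:c4}. On the other hand, dividing Lemma~\ref{lem:summation} by $m$ expresses $S_k(m)/m$ as $B_k$ plus the terms $\binom{k}{2i}B_{k-2i}m^{2i}/(2i+1)$ with $i\ge 1$ (the only odd-index contribution, coming from $B_1$, being $p$-adically negligible), so that $B_k\equiv-\sum_{i\ge1}\binom{k}{2i}B_{k-2i}m^{2i}/(2i+1)$ modulo a large power of $p$. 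The terms of least $p$-valuation are those with $(p-1)\mid(k-2i)$, where $\ord_p B_{k-2i}=-1$; writing $r$ for the least positive residue of $k$ modulo $p-1$, these occur at $2i\equiv r\pmod{p-1}$. Tracking the factor $\binom{k}{2i}$, which carries $\ord_p k$, through this expansion is what should yield $p\nmid k$. The delicate points are the borderline case $r=2$ (the leading pole term can then fall to valuation $3e-1$) and, for hypothetical solutions with large $\ord_p k$, the fact that a crude term-by-term estimate is insufficient, so a genuine cancellation argument---essentially the mechanism already packaged into the Voronoi and Kellner congruences---appears to be needed.

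Granting (c), parts (a) and (d) follow from the Kummer congruence. Let $r$ be the least positive residue of $k$ modulo $p-1$. As $k$ and $p-1$ are both even, $r$ is even; as $(p-1)\nmid k$, $r\ne 0$; and as $p-2$ is odd, $2\le r\le p-3$, so $0<r<p$ and $p\nmid r$. By Lemma~\ref{lem:kummer} we have $B_k/k\equiv B_r/r\pmod p$, and (c) gives $\ord_p(B_k/k)\ge 2\ge 1$, whence $B_r/r\equiv 0\pmod p$. Since $p\nmid r$ and $p\nmid V_r$ (because $(p-1)\nmid r$, as $0<r<p-1$), this forces $p\mid U_r$. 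Thus $(r,p)$ is an irregular pair, which proves (d); in particular $p$ is irregular, which proves (a).

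Finally, (b) is a numerical consequence of (d). By (d), $k\equiv r\pmod{p-1}$ with $(r,p)$ an irregular pair, so $p\mid U_r$ and $p\ge r+3$. The relevant Bernoulli numerators are $U_2=U_4=U_6=U_8=\pm1$, $U_{10}=5$ and $U_{14}=7$: the first four have no prime factor at all, while for $r=10$ one would need $p\ge 13$ dividing $5$ and for $r=14$ one would need $p\ge 17$ dividing $7$, both impossible. Hence no irregular pair has first entry in $\{2,4,6,8,10,14\}$, and $r\ne 0$ since $(p-1)\nmid k$; this is precisely $k\not\equiv 0,2,4,6,8,10,14\pmod{p-1}$. (The value $r=12$ is not excluded, in accordance with the irregular pair $(12,691)$.)
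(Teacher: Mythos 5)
Your reduction of (c) to the inequality $\ord_p U_k\ge 2\ord_p m+\ord_p k$, your treatment of the case $p\nmid k$ via Lemma~\ref{lem:l10}, and your deductions of (a), (d) and (b) from (c) through the Kummer congruence and the explicit numerators $U_2,\dots,U_{14}$ are all correct. The genuine gap is the case $p\mid k$: you identify it as the main obstacle, propose to prove $p\nmid k$ outright by tracking $\ord_p\binom{k}{2i}$ through the expansion of Lemma~\ref{lem:summation}, and then explicitly leave the argument unfinished (``is what should yield'', ``a genuine cancellation argument \ldots appears to be needed''). Since in your scheme (a), (b) and (d) are all derived from (c), the entire proof is incomplete at this point; a sketch of a strategy, with the hard step flagged but not carried out, is not a proof.

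The paper never proves $p\nmid k$ and needs no cancellation. Dividing the identity of Lemma~\ref{lem:summation} by $amk$ gives
$$
\frac{m^{k-1}}{ak}=\frac{B_k}k+\frac{k-1}6 B_{k-2}m^2+m^2\sum_{j=4}^k \frac{(k-1)!}{(k-j)!}B_{k-j}\frac{m^{j-2}}{(j+1)!},
$$
and the decisive observation is that when $p\mid k$ one has $k\ge p^{\ord_p k}$ with $p\ge 5$ (indeed $p\ge 37$ later), so
$$
\ord_p\Bigl(\frac{m^{k/2}}{k}\Bigr)\ge\tfrac12\,p^{\ord_p k}-\ord_p k\ge 1,
$$
i.e.\ the huge power $m^{k-1}$ absorbs the division by $k$; together with $a\mid m^{(k-2)/2}$ (Corollary~\ref{cor:s=2}) the left-hand side is $p$-adically small, and crude term-by-term bounds on the right (using only $\ord_p V_{k-2}\le 1$ and $\ord_p(m^{j-2}/(j+1)!)\ge 1$) already force $\ord_p(B_k/k)>0$, hence (a). Your chosen order of deduction also creates the second difficulty you yourself flag: proving (c) before (b) leaves you exposed to the pole of $B_{k-2}$ when $k\equiv 2\pmod{p-1}$. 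The paper sidesteps this by proving (a) first with the weaker estimate, deducing (b) from (a) and Kummer, and only then sharpening to (c): by (b) one has $\ord_p V_{k-2}=0$, and Lemma~\ref{lem:l7} with $s=5$ (legitimate since $p\ge 37>2^5$) together with Corollary~\ref{cor:c4} upgrades $\ord_p(m^{(k-2)/2}/a)$ to exceed $2\ord_p m$. You should restructure along these lines; as written, part (c), and with it (a), (b) and (d), is not established when $p\mid k$.
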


\begin{proof}
By Corollary~\ref{cor:c1} and Lemmas~\ref{lem:l6} and \ref{lem:l8} we see that $p\ge 5$, $(p-1)\nmid k$ and $p\mid U_k$. If $p\nmid k$, then $\ord_p(B_k/k)>0$ and $p$ is irregular. Now assume that $p\mid k$, i.e., 
$\ord_p k\ge 1$. In view of Corollary~\ref{cor:c3} we 
have $k\ge 10$. Further
$$
S_k(m)=B_km+\frac{k(k-1)}6 B_{k-2}m^3+\sum_{j=4}^k \binom kj B_{k-j}\frac{m^{j+1}}{j+1}.
$$
Hence
\begin{equation}
\label{bernoulli}
\frac{m^{k-1}}{ak}=\frac{B_k}k+\frac{k-1}6 B_{k-2}m^2+m^2\sum_{j=4}^k \frac{(k-1)!}{(k-j)!}B_{k-j}\frac{m^{j-2}}{(j+1)!}.
\end{equation}
By Lemma~\ref{lem:clausen} we have $\ord_p V_{k-2}\le 1$, and hence
\begin{equation}
\label{order1}
\ord_p\left(\frac{k-1}6 B_{k-2}m^2\right)\ge 2\ord_p m-\ord_p V_{k-2}\ge 1.
\end{equation}
Further, for $j=4,5,\dots,k$,
$$
\ord_p\left(\frac{m^{j-2}}{(j+1)!}\right)=(j-2)\ord_p m-\frac{j+1-\sigma_p(j+1)}{p-1}\ge j-2-\frac j4\ge 1,
$$
where $\sigma_p(j+1)$ denotes the sum of the digits of $j+1$ written in the base $p$. Therefore
\begin{equation}
\label{order2}
\ord_p\left(m^2\sum_{j=4}^k\frac{(k-1)!}{(k-j)!}B_{k-j}\frac{m^{j-2}}{(j+1)!}\right)\ge 2\ord_p m\ge 2.
\end{equation}
Note that
\begin{align}
\ord_p\left(\frac{m^{k/2}}k\right)&=\frac k2\ord_p m-\ord_p k 
\ge\frac 12\, p^{\ord_p k}-\ord_p k\notag\\
&>2^{2\ord_p k-1}-\ord_p k\ge\ord_p k\ge 1.\label{order3}
\end{align}
Using Corollary~\ref{cor:s=2} we obtain
\begin{equation}
\label{orderr2}
\ord_p\left(\frac{m^{(k-2)/2}}a\right)\ge 0.
\end{equation}
It follows from \eqref{order3} and \eqref{orderr2} that
\begin{equation}
\label{orderr3}
\ord_p\left(\frac{m^{k-1}}{ak}\right)=\ord_p\left(\frac{m^{(k-2)/2}}a\cdot
\frac{m^{k/2}}k\right)\ge 1.
\end{equation}
Combining \eqref{bernoulli}~--~\eqref{order2} and 
\eqref{orderr3}, we deduce that $\ord_p(B_k/k)>0$, and so $p$ is irregular. This completes the proof of part~(a). Part~(b) is a consequence of part~(a), Lemma~\ref{lem:l6} and the Kummer congruence. In the case $p\nmid k$, part~(c) follows from Lemma~\ref{lem:l10}. Now assume that $p\mid k$. By part~(a), $p$ is an irregular prime, and so $p\ge 37>2^5$. 
On combining Lemma~\ref{lem:l7} with $s=5$ and Corollary~\ref{cor:c4}, \eqref{orderr2} is sharpened to
$$\ord_p\left(\frac{m^{(k-2)/2}}a\right)>2\ord_p m.$$
Combining the latter estimate with \eqref{order3} gives
\begin{align}
\ord_p\left(\frac{m^{k-1}}{ak}\right)>2\ord_p m.\label{order4}
\end{align}
Further, by part~(b) and the von Staudt-Clausen theorem, $\ord_p V_{k-2}=0$. Combining \eqref{bernoulli}, \eqref{order1}, \eqref{order2} and \eqref{order4}, we complete the proof of part~(c). Part~(d) is a direct consequence of part~(c), the fact that $(p-1)\nmid k$, and the Kummer congruence.
\end{proof}

\begin{cor}
\label{cor:c5}
If $a$ has a regular prime divisor, then $a\not\in\EuScript{A}$.
\end{cor}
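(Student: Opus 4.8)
The plan is to argue by contraposition: I will show that if $a\in\EuScript{A}$, then \emph{every} prime divisor of $a$ is irregular, so that the mere presence of a regular prime divisor already forces $a\notin\EuScript{A}$. To this end, suppose $a\in\EuScript{A}$, i.e.\ there is a solution of $aS_k(m)=m^k$ with $m\ge 4$ and $k$ even, and let $p$ be an arbitrary prime dividing $a$.

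The first step is to observe that $p$ necessarily divides $m$. This is immediate from the equation itself: since $p\mid a$ and $aS_k(m)=m^k$, we get $p\mid m^k$, and as $p$ is prime this yields $p\mid m$. (Alternatively one may invoke Corollary~\ref{cor:s=2}, which gives $a\mid m^{(k-2)/2}$ and hence that every prime divisor of $a$ divides $m$.)

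The second and decisive step is to apply Theorem~\ref{thm:t4}(a) to the prime $p\mid m$: that theorem states precisely that any prime dividing $m$ is irregular. Since $p$ was an arbitrary prime divisor of $a$, it follows that every prime divisor of $a$ is irregular. Taking the contrapositive now completes the argument: if $a$ admits at least one regular prime divisor, then no solution of $aS_k(m)=m^k$ with $m\ge 4$ and $k$ even can exist, that is, $a\notin\EuScript{A}$.

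There is essentially no serious obstacle here, as all the heavy lifting has already been carried out in establishing Theorem~\ref{thm:t4}(a). The only point requiring care is the trivial-looking divisibility $p\mid a\Rightarrow p\mid m$, which must be put in place \emph{before} Theorem~\ref{thm:t4} can be brought to bear: that theorem constrains only the primes dividing $m$, and not the primes dividing $a$ directly, so the passage from a prime divisor of $a$ to a prime divisor of $m$ is exactly what links the two and makes the corollary fall out.
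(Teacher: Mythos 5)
Your argument is correct and is exactly the (implicit) proof intended by the paper: every prime divisor of $a$ divides $m$ because $aS_k(m)=m^k$, and Theorem~\ref{thm:t4}(a) then forces each such prime to be irregular, so a regular prime divisor of $a$ rules out membership in $\EuScript{A}$. The paper leaves this deduction unwritten, and your note that the step $p\mid a\Rightarrow p\mid m$ is the necessary bridge to Theorem~\ref{thm:t4} is exactly the right point of care.
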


\begin{cor}
\label{cor:c6}
Let $p_1$ and $p_2$ be distinct irregular prime divisors of $a$. Assume that for every pair $(r_1,p_1)$, $(r_2,p_2)$ of irregular pairs we have $\gcd(p_1-1,p_2-1)\nmid(r_1-r_2)$. Then $a\not\in\EuScript{A}$.
\end{cor}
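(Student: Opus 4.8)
The plan is to argue by contradiction, using part~(d) of Theorem~\ref{thm:t4} as the sole engine. Suppose $a\in\EuScript{A}$, so there is a solution of $aS_k(m)=m^k$ with $m\ge 4$ and $k$ even. The first observation I would record is that every prime divisor of $a$ divides $m$: indeed $aS_k(m)=m^k$ forces $a\mid m^k$, so any prime dividing $a$ divides $m^k$ and hence $m$. In particular the two given irregular prime divisors $p_1$ and $p_2$ of $a$ both divide $m$.

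Next I would apply Theorem~\ref{thm:t4}(d) separately to $p_1$ and to $p_2$. Since $p_1\mid m$, there is an irregular pair $(r_1,p_1)$ with $k\equiv r_1\pmod{p_1-1}$; likewise, since $p_2\mid m$, there is an irregular pair $(r_2,p_2)$ with $k\equiv r_2\pmod{p_2-1}$. Writing $d=\gcd(p_1-1,p_2-1)$ and reducing both congruences modulo $d$ (legitimate because $d$ divides each of $p_1-1$ and $p_2-1$) gives $k\equiv r_1\pmod{d}$ and $k\equiv r_2\pmod{d}$, whence $r_1\equiv r_2\pmod{d}$, that is $d\mid(r_1-r_2)$.

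This last conclusion says $\gcd(p_1-1,p_2-1)\mid(r_1-r_2)$ for this particular choice of irregular pairs, directly contradicting the standing hypothesis that $\gcd(p_1-1,p_2-1)\nmid(r_1-r_2)$ for \emph{every} pair of irregular pairs $(r_1,p_1)$, $(r_2,p_2)$. Hence no solution can exist and $a\not\in\EuScript{A}$.

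I do not expect any genuine obstacle here: the corollary is essentially a compatibility check between the two residue restrictions supplied by Theorem~\ref{thm:t4}(d). The only point requiring a moment's care is the elementary reduction step, namely that a congruence modulo $p_i-1$ descends to one modulo the common divisor $d$; once that is in place the contradiction is immediate, and the hypothesis is precisely engineered so that the two forced residues $r_1,r_2$ can never be reconciled modulo $d$.
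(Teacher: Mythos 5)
Your proof is correct and follows exactly the argument the paper intends: the corollary is stated without a separate proof, but the example immediately following it (for $37\cdot 379\mid a$) carries out precisely your reasoning — both primes divide $m$, Theorem~\ref{thm:t4}(d) forces $k\equiv r_i\pmod{p_i-1}$ for some irregular pairs, and the hypothesis makes these congruences incompatible modulo $\gcd(p_1-1,p_2-1)$. Your preliminary observation that every prime divisor of $a$ divides $m$ (via $a\mid m^k$) is the same fact the paper uses throughout, so there is nothing to add.
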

\noindent {\tt Example}. Suppose that $37\cdot 379\mid a$. If $a\in\EuScript A$ then 
$aS_k(m)=m^k$  with $m\ge 4$ and $k$ even and both $37$ and $379$ must divide $m$. There is one irregular pair $(32,37)$ corresponding to $37$ and two irregular pairs $(100,379)$ and $(174,379)$ corresponding to $379$. By 
Theorem~\ref{thm:t4} (d), $k$ must be a simultaneous solution of the congruences $k\equiv 32\pmod{36}$ and $k\equiv 100\;\text{or\;}174\pmod{378}$, which is impossible as $\gcd(36,378)=18$, $18\nmid (32-100)$ and $18\nmid(32-174)$. Hence $a\not\in\EuScript A$.

\section{Helpful pairs}
\label{sec:hp}
Helpful pairs will be used to show that certain ratios are forbidden (Section \ref{exclusive}) and to show that certain 
numbers have to divide $k$ (Section~\ref{sec:cascade}). In both  cases one has to exclude that $k$ is in certain congruence
classes. In order to show that a certain ratio is forbidden, we have to exclude {\it all} the congruence 
classes with an appropriate modulus.
In order to show that a certain even number $d$ divides $k$, we do this by excluding all the
congruence classes $2i\pmod{d}$ for $1\le i<d/2$. 
If $p\mid{a}$ is an irregular prime, $d=p-1$, then by Theorem~\ref{thm:t4} we immediately exclude many
congruence classes.\\
\indent The exclusion of a congruence is achieved by a helpful pair and
the procedure is described just after the proof of the crucial Lemma~\ref{helpful}.
\begin{Def}
For a positive integer $a$ let us call a pair $(t,q)_a$ with $q$ a prime and $2\le t\le q-3$ even to be \emph{helpful} if $q\nmid a$
and
$aS_t(c)\not\equiv c^t\pmod{q}$ for every integer 
$c$ satisfying $1\le c\le q-1$. 
If $q$ is an irregular prime, we require in
addition that $(t,q)$ should not be an irregular pair.
\end{Def}

\begin{Prop}
Let $q\ge 5$ be a prime and $a$ be a positive integer. Then $(2,q)_a$ is a helpful pair if and only if $\bigl(\frac{a^2+36a+36}q\bigr)=-1$.
\end{Prop}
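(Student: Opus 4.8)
The plan is to unwind the definition of a helpful pair for the special value $t=2$, reduce the defining non-congruence to a statement about the solvability of a single quadratic congruence modulo $q$, and then read off the criterion from the discriminant of that quadratic.

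First I would insert the closed form $S_2(c)=(c-1)c(2c-1)/6$. Since $q\ge 5$, the integer $6$ is invertible modulo $q$, so after clearing the denominator the congruence $aS_2(c)\equiv c^2\pmod q$ becomes $a(c-1)c(2c-1)\equiv 6c^2\pmod q$, that is $c\bigl(2ac^2-(3a+6)c+a\bigr)\equiv 0\pmod q$. Because the relevant range is $1\le c\le q-1$, the factor $c$ is a unit and may be dropped; thus $(2,q)_a$ fails to be helpful (apart from the side conditions) exactly when the quadratic $Q(c):=2ac^2-(3a+6)c+a$ has a root modulo $q$ in that range.

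Next I would compute the discriminant of $Q$, which is $(3a+6)^2-8a^2=a^2+36a+36$, precisely the quantity appearing in the statement. Assuming first $q\nmid a$, the leading coefficient $2a$ is a unit, so $Q$ is a genuine quadratic and has a root modulo $q$ if and only if $\bigl(\frac{a^2+36a+36}q\bigr)\in\{0,1\}$, i.e.\ if and only if this symbol is not $-1$. The one point needing care is the passage from ``root in $1\le c\le q-1$'' to ``root modulo $q$''; this is harmless because $Q(0)=a\not\equiv 0\pmod q$, so no root of $Q$ is $\equiv 0$, and every root modulo $q$ automatically lies in the desired range. Hence, under $q\nmid a$, the pair is helpful iff $\bigl(\frac{a^2+36a+36}q\bigr)=-1$.

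Finally I would dispose of the two side conditions. If $q\mid a$ then $a^2+36a+36\equiv 36=6^2\pmod q$ is a nonzero square (here $q\ge 5$ guarantees $6\not\equiv 0$), so the symbol equals $+1$; contrapositively, the symbol being $-1$ already forces $q\nmid a$, so the hypothesis $q\nmid a$ in the definition is automatically subsumed and need not be tracked separately. The remaining clause---that for irregular $q$ the pair $(2,q)$ should not be an irregular pair---is vacuous, since an irregular pair $(2,q)$ would require $q\mid U_2$, whereas $B_2=1/6$ gives $U_2=1$. Combining these observations yields the stated equivalence. I do not expect a genuine obstacle here: the argument is an elementary discriminant computation, and the only subtle bookkeeping is the interaction between the excluded residue $c\equiv 0$ and the condition $q\nmid a$, both of which I have accounted for above.
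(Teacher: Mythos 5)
Your proof is correct and follows essentially the same route as the paper: insert $S_2(c)=(2c^3-3c^2+c)/6$, cancel the unit $c$, and reduce to whether the quadratic $2ac^2-(3a+6)c+a$ with discriminant $a^2+36a+36$ has a root modulo $q$. Your explicit handling of the edge cases ($q\mid a$ forcing the symbol to be $+1$, and $(2,q)$ never being an irregular pair since $U_2=1$) is exactly what the paper compresses into one line.
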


\begin{proof}
Note that $B_2=1/6$ and hence $(2,q)$ cannot be an irregular pair. Since $S_2(c)=(2c^3-3c^2+c)/6$, we see that $(2,q)_a$ is a helpful pair if and only if $q\nmid a$ and $a(2c^3-3c^2+c)\not\equiv 6c^2\pmod{q}$ for $c=1,\dots,q-1$, that is, 
if and only if $a(2c^2-3c+1)\not \equiv 6c\pmod{q}$ for $c=0,\ldots,q-1$, i.e., if and only if we have
$\bigl(\frac{9(a+2)^2-8a^2}q\bigr)=\bigl(\frac{a^2+36a+36}q\bigr)=-1$.
\end{proof}

\begin{Prop}
Let $q\ge 7$ be a prime with $\bigl(\frac{31}q\bigr)=-1$. Then $(4,q)_{q-2}$ is a helpful pair.
\end{Prop}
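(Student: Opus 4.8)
The plan is to verify the three requirements in the definition of a helpful pair for the pair $(4,q)_{q-2}$ directly, peeling off the trivial conditions first and concentrating effort on the congruence condition. The coprimality $q\nmid(q-2)$ is immediate since $q\ge 7$. The supplementary condition—that $(4,q)$ not be an irregular pair when $q$ is irregular—is vacuous: because $B_4=-1/30$ has numerator $U_4=-1$, no prime $q$ can satisfy $q\mid U_4$, so $(4,q)$ is never an irregular pair. This reduces the whole statement to showing $(q-2)S_4(c)\not\equiv c^4\pmod q$ for every $c$ with $1\le c\le q-1$.

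The main step is to turn this congruence into a Legendre-symbol condition, exactly in the spirit of the preceding proposition for $(2,q)_a$. Since $q-2\equiv -2\pmod q$, the requirement becomes $2S_4(c)+c^4\not\equiv 0\pmod q$. I would substitute the closed form $S_4(c)=\frac{(c-1)c(2c-1)(3c^2-3c-1)}{30}$, which is legitimate modulo $q$ because $q\nmid 30$, and clear denominators. The expected outcome of the simplification is the clean identity $30\bigl(2S_4(c)+c^4\bigr)=2c(6c^4+10c^2-1)$, so that, $q$ being coprime to $30$, the condition is equivalent to $c(6c^4+10c^2-1)\not\equiv 0\pmod q$.

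As $c$ ranges over $1,\dots,q-1$ we have $c\not\equiv 0$, so it suffices to prove that the quartic factor $6c^4+10c^2-1$ has no zero modulo $q$. Here the hypothesis $\bigl(\frac{31}{q}\bigr)=-1$ enters. Setting $x=c^2$, any zero of the quartic forces $x$ to be a root of $6x^2+10x-1$, whose discriminant is $100+24=4\cdot 31$. Since $q\nmid 6$ (again guaranteed by $q\ge 7$) and $4$ is a square, this discriminant is a quadratic non-residue precisely when $\bigl(\frac{31}{q}\bigr)=-1$; hence the quadratic, and therefore the quartic, has no root at all, and the argument is complete.

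I expect the only real obstacle to be computational rather than conceptual: verifying the polynomial identity $2(c-1)c(2c-1)(3c^2-3c-1)+30c^4=2c(6c^4+10c^2-1)$ accurately, and checking that the quadratic cancellations in $S_4(c)$ really produce the factor $6c^4+10c^2-1$ with no stray middle term. Once this identity is in hand, the reduction to the discriminant $4\cdot 31$ and the Legendre-symbol conclusion are routine and parallel the argument already given for $(2,q)_a$.
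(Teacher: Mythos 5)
Your proposal is correct and follows essentially the same route as the paper: both reduce the helpfulness condition to $6c^4+10c^2-1\not\equiv 0\pmod{q}$ via the closed form of $S_4(c)$ and then invoke the discriminant $100+24=4\cdot 31$ together with $\bigl(\frac{31}{q}\bigr)=-1$, and both dispose of the irregular-pair condition by noting $B_4=-1/30$. The polynomial identity you flag as the only risk does check out, so there is nothing to add.
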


\begin{proof}
From $\bigl(\frac{31}q\bigr)=-1$ we deduce that $6n^2+10n-1\not\equiv 0\pmod{q}$ for $n=1,\dots,q-1$. This implies that $30(q-2)S_4(c)-30c^4\equiv -2c(6c^4+10c^2-1)\not\equiv 0\pmod{q}$ for $c=1,\dots,q-1$. Since $B_4=-1/30$, $(4,q)$ is not an irregular pair. Thus $(4,q)_{q-2}$ is a helpful pair. 
\end{proof}

\begin{Lem}
\label{helpful}
Let $2\le t\le q-3$ and $q$ be a prime.
If $(t,q)_a$ is a helpful pair and $aS_k(m)=m^k$ with $k$ even, then 
we have $k\not\equiv t\pmod{q-1}$.
\end{Lem}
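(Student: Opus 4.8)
The plan is to argue by contradiction: assume $k\equiv t\pmod{q-1}$ and reduce the integer identity $aS_k(m)=m^k$ modulo $q$ until it collides with the defining inequality of a helpful pair. The starting observation is that, since $q$ is prime and $t\equiv k\pmod{q-1}$ with $t,k\ge 1$, Fermat's little theorem gives $j^k\equiv j^t\pmod q$ for every integer $j$ (trivially when $q\mid j$, and via $j^{q-1}\equiv 1\pmod q$ otherwise). Summing over $1\le j\le m-1$ yields $S_k(m)\equiv S_t(m)\pmod q$, while $m^k\equiv m^t\pmod q$. Hence $aS_k(m)=m^k$ reduces to $aS_t(m)\equiv m^t\pmod q$.

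Next I would fold $S_t(m)$ down to a power sum whose argument lies in the range occurring in the definition. Let $c$ be the least nonnegative residue of $m$ modulo $q$ and split $\sum_{j=0}^{m-1}j^t$ into complete blocks of length $q$ followed by a residual block of length $c$. The point is that each complete block contributes $\sum_{j=0}^{q-1}j^t\equiv 0\pmod q$, and this vanishing is exactly where the constraint $2\le t\le q-3$ is used: it forces $q\ge 5$ and $0<t<q-1$, so $(q-1)\nmid t$, and the classical evaluation of $\sum_{j=0}^{q-1}j^t$ over a primitive root gives $0$ rather than $-1$. Only the residual block survives, giving $S_t(m)\equiv S_t(c)\pmod q$ and $m^t\equiv c^t\pmod q$. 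Combining with the previous step leaves $aS_t(c)\equiv c^t\pmod q$.

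If $1\le c\le q-1$, this congruence directly contradicts the defining property of the helpful pair $(t,q)_a$, and we are finished. The one remaining case, and the real obstacle, is $c=0$, that is $q\mid m$: then $S_t(c)$ and $c^t$ both vanish and the congruence $aS_t(c)\equiv c^t$ holds automatically, so no contradiction is visible from the reduction alone. Here I would invoke Theorem~\ref{thm:t4}; note that $q\mid m$ with $m\ge 1$ forces $m\ge q\ge 5>4$, so the theorem applies. Part~(a) shows $q$ must be irregular, and part~(d) shows $k\equiv r\pmod{q-1}$ for some irregular pair $(r,q)$. Comparing with the standing assumption $k\equiv t\pmod{q-1}$ gives $t\equiv r\pmod{q-1}$; since both $t$ and $r$ are even and lie in $\{0,1,\dots,q-2\}$, a complete residue system modulo $q-1$, this forces $t=r$, so $(t,q)$ would itself be an irregular pair. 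That is precisely what the extra clause in the definition of a helpful pair for irregular $q$ forbids. Thus $c=0$ cannot occur, and in every case we reach a contradiction, establishing $k\not\equiv t\pmod{q-1}$.
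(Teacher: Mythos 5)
Your proof is correct and takes essentially the same route as the paper's: reduce $aS_k(m)=m^k$ modulo $q$ to $aS_t(c)\equiv c^t\pmod q$ using Fermat and the vanishing of the complete block sum (the paper cites the Carlitz--von Staudt lemma for $q\mid S_t(q)$), and rule out the residue $c=0$ via Theorem~\ref{thm:t4}(d) together with the irregular-pair clause in the definition of a helpful pair. The only differences are cosmetic --- the paper dispatches $q\mid m$ at the outset rather than at the end --- and your explicit observation that $q\mid m$ forces $m\ge q\ge 5>4$, so that Theorem~\ref{thm:t4} is applicable, is a point the paper leaves implicit.
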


\begin{proof}
Assume that $k\equiv t\pmod{q-1}$. By Theorem~\ref{thm:t4}~(d) we must have $q\nmid m$, for otherwise $(t,q)$ is 
an irregular pair, contradicting the definition of a helpful pair. Thus we can write 
$m=m_0q+b$ with $1\le b\le q-1$. By Lemma~\ref{lem:staudt} we have $q\mid S_t(q)$.
We now find, modulo $q$, $S_k(m)\equiv S_t(m)\equiv m_0S_t(q)+S_t(b)\equiv S_t(b)$. Thus if \eqref{KEME} is satisfied we must have $aS_t(b)\equiv b^t\pmod{q}$. By the definition of a helpful pair this
is impossible.
\end{proof}

\noindent {\it Ruling out congruence classes for $k$}. The helpful pairs give us a chance to rule out $k$ that satisfy certain congruences of the
form $k\equiv c\pmod{d}$ with $c\ge 2$ and $d$ even. We first list all primes $q\ge 5$ such
that $q-1$ divides $d$. Denote these primes by $q_1,\ldots,q_s$. Let $t_i$ denote the least nonnegative integer congruent to $c$ modulo $q_i-1$. If one of the pairs
$(t_i,q_i)_a$ is helpful, by Lemma~\ref{helpful} we have ruled out $k\equiv c\pmod{d}$.
If this does not work, we multiply $d$ by an integer $\ell\ge 2$ (\emph{the lifting factor}). Our original congruence is now replaced by $\ell$ congruences, 
$k\equiv c+jd\pmod{\ell d}$, $0\le j<\ell$. For each of these congruences we now continue as before.
In certain cases we find that each of the lifted congruences is ruled out by a helpful pair and then
we are done. This situation is described in Proposition~\ref{prop:t5} below. If not all of the lifted
congruences are excluded by helpful pairs, we can lift the bad congruences still further.
The above procedure is not systematic and each stage the danger lurks that we get too many congruence 
classes we cannot exclude anymore.

\begin{Prop}
\label{prop:t5}
Let $p$ be an irregular prime dividing $a$. Assume that for every irregular pair $(r,p)$ there exists a positive integer $\ell_r$ such that for every $j=0,1,\dots,\ell_r-1$ there is a helpful pair $(t_j,q_j)_a$ with $(q_j-1)\mid \ell_r(p-1)$ and $t_j\equiv r+j(p-1)\pmod{q_j-1}$. Then $a\not\in\EuScript{A}$.
\end{Prop}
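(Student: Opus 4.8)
The plan is to argue by contradiction, converting the hypothesis into an exclusion of the single residue class $k\equiv r\pmod{p-1}$ that any solution is forced into. Suppose $a\in\EuScript{A}$, so that $aS_k(m)=m^k$ for some $m\ge 4$ and even $k$. Since every prime divisor of $a$ divides $m$, the irregular prime $p$ divides $m$; hence Theorem~\ref{thm:t4}~(d) supplies an irregular pair $(r,p)$ with $k\equiv r\pmod{p-1}$. I would fix this pair and invoke the hypothesis for it, obtaining $\ell_r$ and the helpful pairs $(t_j,q_j)_a$ for $0\le j<\ell_r$.

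Next I would locate $k$ within the finer arithmetic progression modulo $\ell_r(p-1)$. Writing $k=r+n(p-1)$ with $n\ge 0$, choose the unique $j\in\{0,1,\dots,\ell_r-1\}$ with $n\equiv j\pmod{\ell_r}$. Then $(n-j)(p-1)$ is a multiple of $\ell_r(p-1)$, so $k\equiv r+j(p-1)\pmod{\ell_r(p-1)}$. Because $(q_j-1)\mid\ell_r(p-1)$, this congruence descends modulo $q_j-1$, and combined with $t_j\equiv r+j(p-1)\pmod{q_j-1}$ it yields $k\equiv t_j\pmod{q_j-1}$. Finally I would apply Lemma~\ref{helpful} to the helpful pair $(t_j,q_j)_a$: since $aS_k(m)=m^k$ with $k$ even, it forces $k\not\equiv t_j\pmod{q_j-1}$, contradicting the congruence just derived. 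Hence no solution exists and $a\not\in\EuScript{A}$.

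This is essentially a formalization of the lifting procedure described just before the statement, so I do not expect a deep obstacle. The two points that require care are that the index $j$ is \emph{not} fixed in advance but is determined by $k$ through $n\bmod\ell_r$ — which is exactly why the hypothesis must furnish a helpful pair for \emph{every} $j$ in the range — and that the divisibility $(q_j-1)\mid\ell_r(p-1)$ is precisely what is needed to push a congruence modulo $\ell_r(p-1)$ down to one modulo $q_j-1$. One should also note that the whole argument must run through whichever irregular pair $(r,p)$ happens to capture $k$, which accounts for the quantifier ``for every irregular pair'' in the statement; since $r$ is even and $p-1$ is even, the parity of $k$ is automatically respected throughout.
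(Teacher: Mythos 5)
Your proof is correct and follows the same route as the paper's: use Theorem~\ref{thm:t4}~(d) to place $k$ in a class $r\pmod{p-1}$, lift to the class $r+j(p-1)\pmod{\ell_r(p-1)}$ determined by $k$, reduce modulo $q_j-1$, and contradict Lemma~\ref{helpful}. The only nitpick is that the integer $n$ in $k=r+n(p-1)$ need not be nonnegative, but this is immaterial since the argument only uses $n\bmod\ell_r$.
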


\begin{proof}
Since $p$ must divide $m$, Theorem~\ref{thm:t4}~(d) yields $k\equiv r\pmod{p-1}$ for some irregular pair $(r,p)$. Hence there exists $j\in\{0,1,\dots,\ell_r-1\}$ such that $k\equiv r+j(p-1)\pmod{\ell_r(p-1)}$. Then we have $k\equiv t_j\pmod{q_j-1}$ for the helpful pair $(t_j,q_j)_a$, which contradicts Lemma~\ref{helpful}.
\end{proof}

\begin{cor}
Under the conditions of Proposition~$\ref{prop:t5}$, we have $ab\not\in\EuScript{A}$ for any positive integer $b\equiv 1\pmod{Q}$, where $Q$ denotes the least common multiple of all components $q_j$ of helpful pairs constructed for all irregular pairs $(r,p)$ corresponding to~$p$.
\end{cor}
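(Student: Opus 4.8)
The plan is to show that the entire argument proving Proposition~\ref{prop:t5} survives intact when $a$ is replaced by $ab$, so that the same conclusion $ab\not\in\EuScript{A}$ drops out with no new work. The organizing observation is that whether a pair $(t,q)_a$ is helpful depends on $a$ only through its residue class modulo $q$: the two arithmetic clauses in the definition, namely $q\nmid a$ and $aS_t(c)\not\equiv c^t\pmod{q}$ for all $1\le c\le q-1$, are stated entirely modulo $q$, while the remaining clauses (that $2\le t\le q-3$ is even and, when $q$ is irregular, that $(t,q)$ is not an irregular pair) make no reference to $a$ whatsoever.

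Granting this, I would argue as follows. Since $b\equiv 1\pmod{Q}$ and each component $q_j$ divides $Q$, we have $b\equiv 1\pmod{q_j}$ for every helpful pair $(t_j,q_j)_a$ occurring in the hypotheses of Proposition~\ref{prop:t5}. Consequently $ab\equiv a\pmod{q_j}$. From $q_j\nmid a$ and $q_j\nmid b$ (the latter because $b\equiv 1\pmod{q_j}$) we get $q_j\nmid ab$, and from $ab\equiv a\pmod{q_j}$ we get $abS_{t_j}(c)\equiv aS_{t_j}(c)\not\equiv c^{t_j}\pmod{q_j}$ for every $c$. By the observation above, $(t_j,q_j)_{ab}$ is therefore again a helpful pair.

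It then remains only to verify that the non-helpfulness data in Proposition~\ref{prop:t5} are preserved. Because $p\mid a$, we have $p\mid ab$, so $p$ is still an irregular prime dividing $ab$, and the irregular pairs $(r,p)$ attached to $p$ are intrinsic to $p$ and hence unchanged. Likewise the integers $\ell_r$, the divisibilities $(q_j-1)\mid\ell_r(p-1)$, and the congruences $t_j\equiv r+j(p-1)\pmod{q_j-1}$ involve no $a$ and carry over verbatim. Thus every hypothesis of Proposition~\ref{prop:t5} holds with $ab$ in place of $a$, and that proposition yields $ab\not\in\EuScript{A}$.

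I do not expect any genuine obstacle here; the proof is a direct transport-of-structure argument. The only point demanding care is the bookkeeping: one must run through each clause of the definition of a helpful pair and each hypothesis of Proposition~\ref{prop:t5} and confirm, clause by clause, that passing from $a$ to $ab$ with $b\equiv 1\pmod{q_j}$ changes nothing relevant. The crux that makes this succeed is precisely that helpfulness is a congruence condition modulo $q_j$, which is exactly why $Q$ is defined as the least common multiple of the components $q_j$ rather than involving $p$ or the moduli $q_j-1$.
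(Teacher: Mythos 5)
Your proposal is correct and is exactly the intended argument: the paper states this corollary without proof precisely because helpfulness of $(t_j,q_j)_a$ depends on $a$ only through its residue modulo $q_j$ (as the notation $(t_j,q_j)_{a\pmod{q_j}}$ in Table~\ref{table3} already signals), so $b\equiv 1\pmod{Q}$ gives $ab\equiv a\pmod{q_j}$ for every component $q_j$ and all hypotheses of Proposition~\ref{prop:t5} transfer verbatim to $ab$. Your clause-by-clause verification, including the observation that $p\mid a$ implies $p\mid ab$, fills in the implicit details faithfully.
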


\section{Excluding a given ratio $\rho$}
\label{exclusive}
Let $\rho\ge 3$. Write $a=\rho-1$. If $a$ has no regular prime divisor, the only
way we know to exclude $\rho$ is by using helpful pairs or invoking Corollary~\ref{cor:c6}. We demonstrate this
with two examples (an example of the usage of Corollary \ref{cor:c6} we already gave immediately following the 
statement of Corollary~\ref{cor:c6}).
\smallskip

\noindent {\tt Easy example}: $a=673$.\\ 
\noindent There are two irregular pairs $(408,673)$ and $(502,673)$ corresponding to $673$. Theorem~\ref{thm:t4}~(d) yields $k\equiv 408\,\,\text{or}\,\,502\pmod{672}$. If $k\equiv 408\pmod{672}$, then $k\equiv 8\pmod{16}$. The latter is impossible since 
$(8,17)_{10}$ is a helpful pair by Table~\ref{table2}.
If $k\equiv 502\pmod{672}$, then $k\equiv 2\pmod{4}$, which is impossible as $(2,5)_3$ is a helpful pair by Table~\ref{table2}.
\smallskip

\noindent {\tt Difficult example}: $a=653$.\\ 
\noindent There is one irregular pair $(48,653)$ corresponding to 
$653$, and so $k\equiv 48\pmod{652}$. We have $652=2^2\cdot 163$. There are no helpful
pairs $(48,q)$ with $(q-1)\mid{652}$. So we have to use a lifting factor $\ell$. It turns out
that $\ell=4$ is a useful factor. So that is why we use it in the first step.

\emph{Step $1$.} We have $k\equiv 48\,\,\text{or}\,\,700\,\,\text{or}\,\,1352\,\,\text{or}\,\,2004\pmod{2608}$. The case $k\equiv 48\pmod{2608}$ is impossible, since $(48,2609)_{653}$ is a helpful pair. If $k\equiv 700\,\,\text{or}\,\,1352\pmod{2608}$, then $k\equiv 8\,\,\text{or}\,\,12\pmod{16}$, which is impossible as $(8,17)_7$ and $(12,17)_7$ are helpful pairs. Thus $k\equiv 2004\pmod{2608}$. 

\emph{Step $2$.} We have $k\equiv 2004\,\,\text{or}\,\,4612\,\,\text{or}\,\,7220\,\,\text{or}\,\,9828\,\,\text{or}\,\,12436\pmod{13040}$. If $k\equiv 4612\,\,\text{or}\,\,7220\,\,\text{or}\,\,9828\pmod{13040}$, then $k\equiv 12\,\,\text{or}\,\,20\,\,\text{or}\,\,28\pmod{40}$. From the fact that $(12,41)_{38}$, $(20,41)_{38}$ and $(28,41)_{38}$ are helpful pairs we deduce that the latter congruence is impossible. Hence $k\equiv 2004\,\,\text{or}\,\,12436\pmod{13040}$. 

\emph{Step $3$.} We have $k\equiv 2004\,\,\text{or}\,\,12436\,\,\text{or}\,\,15044\,\,\text{or}\,\,25476\,\,\text{or}\,\,28084\,\,\text{or}\,\,38516$\linebreak $\pmod{39120}$. If $k\equiv 15044\,\,\text{or}\,\,28084\pmod{39120}$ then $k\equiv 4\,\,\text{or}\,\,14\pmod{30}$, which is impossible as $(4,31)_2$ and $(14,31)_2$ are helpful pairs. If $k\equiv 2004\,\,\text{or}\,\,25476\pmod{39120}$, then $k\equiv 24\,\,\text{or}\,\,36\pmod{60}$. The latter is impossible, since $(24,61)_{43}$ and $(36,61)_{43}$ are helpful pairs. The case $k\equiv 12436\pmod{39120}$ implies $k\equiv 196\pmod{240}$, which is impossible as $(196,241)_{171}$ is a helpful pair. The remaining case $k\equiv 38516\pmod{39120}$ is also impossible, since in this case $k\equiv 9176\pmod{9780}$ and $(9176,9781)_{653}$ is a helpful pair.\\

\noindent {\tt Remark}. Using helpful pairs, we can find some infinite families of forbidden ratios. For example, let $\rho=37^{s}+1$ for some positive integer $s$. 
The prime $37$ is irregular and $(32,37)$ an irregular pair.
If $37^s S_k(m)=m^k$ with $k$ even then, by Theorem~\ref{thm:t4}~(d), we have $k\equiv 32\pmod{36}$. This implies that $k\equiv 8\pmod{12}$. Since $(8,13)_{37^s}$ is a helpful pair if and only if $37^s\equiv 1\,\,\text{or}\,\,2\,\,\text{or}\,\,6\,\,\text{or}\,\,8\,\,\text{or}\,\,11\pmod{13}$ (see Table~\ref{table2}), we deduce that $\rho=37^{s}+1$ is a forbidden ratio for any $s\equiv 0\,\,\text{or}\,\,1\,\,\text{or}\,\,7\,\,\text{or}\,\,9\,\,\text{or}\,\,11\pmod{12}$.

\section{Bad ratios}
\label{sec:bad}
Table~\ref{table3} gives a list of ratios we excluded and the helpful pairs used to do so.
The attentive reader will notice that various ratios $\rho$ are apparently bad and difficult
to exclude. These are related to $a=\rho-1$ that are of the 
form $(2p+1)^s$ with $p$ a \emph{Sophie Germain prime}. 
Recall that a prime $p$ is said to be a \emph{Sophie Germain prime} if also $2p+1$ is a prime. Heuristics suggests that there ought to be infinitely many Sophie Germain primes such that $2p+1$ is an irregular prime.
\begin{Con}
There are infinitely many primes $p$ such that $2p+1$ is an irregular prime.
\end{Con}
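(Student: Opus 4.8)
The plan is to reduce the statement to two independent density heuristics and then confront the fact that making either rigorous lies beyond current technology. First I would record the relevant predictions. By the Hardy--Littlewood conjecture in the Sophie Germain case, the number of primes $p\le x$ with $2p+1$ also prime is expected to be $\sim 2C_2\, x/(\log x)^2$, where $C_2=\prod_{p>2}(1-1/(p-1)^2)$ is the twin-prime constant. Independently, Siegel's heuristic quoted in Section~\ref{preli} predicts that a prime $q$ is irregular with ``probability'' $1-e^{-1/2}\approx 0.3935$: modelling each numerator $U_r\bmod q$ as uniform, the prime $q$ divides none of the roughly $q/2$ numerators $U_r$ with even $2\le r\le q-3$ with probability $(1-1/q)^{q/2}\approx e^{-1/2}$. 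If regularity of $q=2p+1$ were independent of the Sophie Germain condition on $p$, one would expect $\sim 0.3935\cdot 2C_2\, x/(\log x)^2$ admissible $p\le x$, a quantity tending to infinity, which would give the conjecture.

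To convert this into a proof I would proceed in two steps. \emph{Step one}: exhibit infinitely many Sophie Germain primes, that is, infinitely many $p$ with $q=2p+1$ prime. \emph{Step two}: show that among the primes $q=2p+1$ so produced, all but finitely many cannot be regular. For the second step the natural mechanism is Kummer's criterion: $q$ is irregular precisely when $q\mid U_r$ for some even $r$ with $2\le r\le q-3$. One would hope that an averaging or sieve argument over the family $\{2p+1\}$ forces the events $q\mid U_r$ to occur often enough that the regular members cannot exhaust the family, yielding infinitely many irregular $q$.

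The main obstacle is that \emph{both} steps are, on their own, far out of reach, and the excerpt already signals as much. Step one is the Sophie Germain problem, a binary additive question of the same depth as the twin-prime conjecture; sieve methods of Chen type deliver only that $2p+1$ is prime or a product of two primes infinitely often, which is insufficient here. Step two is at least as hard: the best unconditional lower bound for the counting function $\pi_{\iota}(x)$ of irregular primes, quoted above from the work of Luca et al., is only of size $\log\log x/\log\log\log x$, so one cannot presently guarantee even a positive proportion of \emph{all} primes to be irregular, let alone control irregularity along a thin, conjecturally infinite subfamily. Absent a breakthrough on either front, the statement must remain a conjecture; the density heuristic above is offered only as evidence for its truth, exactly paralleling the computation of $N_{\iota}(x)$ in Section~\ref{preli}.
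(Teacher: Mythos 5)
The statement you were asked about is stated in the paper as a \emph{conjecture}, and the paper offers no proof of it --- only the one-line remark that ``heuristics suggest'' there should be infinitely many Sophie Germain primes $p$ with $2p+1$ irregular. Your assessment is therefore correct and matches the paper's own position: you spell out the same heuristic (Hardy--Littlewood for the Sophie Germain condition combined with Siegel's $1-e^{-1/2}$ irregularity heuristic) in more detail than the authors do, and you correctly identify why both ingredients are far beyond current methods.
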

Let $p$ be a prime such that $2p+1$ is an irregular prime and let $(r,2p+1)$ be an irregular pair. 
In case we want to rule out $k\equiv r\pmod{2p}$ we are in bad shape to start with. We are directly
forced here to use a lifting factor $\ell\ge 2$ (as the list of primes $5\le q<2p+1$ with $(q-1)\mid2p$ is empty here).
The next result shows that we are in even worse shape, since helpful pairs with primes $q=2pu+1>6p$ have
to be used.
\begin{Prop}
\label{prop:t10}
Let $p$ be a prime such that $2p+1$ is an irregular prime dividing $a$ and let $(r,2p+1)$ be an irregular pair. Let $\ell$ be a positive integer with $p\nmid\ell$, let $q_0,q_1,\dots,q_{\ell-1}$ be odd primes with $(q_j-1)\mid 2\ell$ (not necessarily distinct) and let $t_0,t_1,\dots,t_{\ell-1}$ be positive integers satisfying the conditions $t_j\equiv r+2pj\pmod{q_j-1}$, $0\le j\le\ell-1$. Then at least one of the pairs $(t_0,q_0)_a,(t_1,q_1)_a,\dots,(t_{\ell-1},q_{\ell-1})_a$ is not a helpful pair. 
\end{Prop}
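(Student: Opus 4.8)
The plan is to prove the statement directly and constructively: rather than assuming all the pairs are helpful and hunting for a numerical witness $c$ with $aS_{t_j}(c)\equiv c^{t_j}\pmod{q_j}$, I will exhibit one distinguished index $j_0$ for which $(t_{j_0},q_{j_0})_a$ fails to be helpful already at the level of the \emph{admissibility} clause of the definition. Recall that a helpful pair $(t,q)_a$ must in particular satisfy $2\le t\le q-3$ with $t$ even. So it suffices to locate a $j_0\in\{0,1,\dots,\ell-1\}$ for which the prescribed exponent collapses, namely $t_{j_0}\equiv 0\pmod{q_{j_0}-1}$; for such a $j_0$ the positive integer $t_{j_0}$ is a multiple of $q_{j_0}-1$ and hence lies outside the window $[2,q_{j_0}-3]$. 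This reduces the whole proposition to finding an index $j_0$ with $(q_{j_0}-1)\mid(r+2pj_0)$.

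Since $(q_j-1)\mid 2\ell$ for every $j$ by hypothesis, it is enough to solve the single congruence $r+2pj\equiv 0\pmod{2\ell}$ in the unknown $j$, i.e.\ $2pj\equiv -r\pmod{2\ell}$; any solution $j_0\in\{0,\dots,\ell-1\}$ then automatically satisfies $r+2pj_0\equiv 0\pmod{q_{j_0}-1}$, whence $t_{j_0}\equiv r+2pj_0\equiv 0\pmod{q_{j_0}-1}$. This solvability is the one genuine step, and it is where the hypotheses are used: $r$ is even because $(r,2p+1)$ is an irregular pair, so I may cancel a factor of $2$ and reduce to $pj\equiv -r/2\pmod{\ell}$; and since $p$ is prime with $p\nmid\ell$, the residue $p$ is a unit modulo $\ell$, so this linear congruence has a (unique) solution $j_0$ modulo $\ell$, which I take in $\{0,\dots,\ell-1\}$. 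Equivalently, $\gcd(2p,2\ell)=2\mid r$, so $2pj\equiv -r\pmod{2\ell}$ is solvable. I expect this existence step — spotting that the target residue $-r$ is an even residue and is therefore hit by the arithmetic progression $\{2pj\bmod 2\ell\}$, which sweeps out exactly all even residues mod $2\ell$ — to be the only point requiring an idea; everything before and after is definitional bookkeeping.

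Having produced $j_0$ with $t_{j_0}$ a positive multiple of $q_{j_0}-1$, I conclude $t_{j_0}\ge q_{j_0}-1>q_{j_0}-3$, so $(t_{j_0},q_{j_0})_a$ cannot satisfy $2\le t_{j_0}\le q_{j_0}-3$ and is therefore not a helpful pair, which is the assertion. I would close with the interpretive remark that the class singled out by $j_0$ is precisely $k\equiv r+2pj_0\equiv 0\pmod{q_{j_0}-1}$, that is $(q_{j_0}-1)\mid k$ — exactly the congruence type that helpful pairs are structurally unable to eliminate (compare Theorem~\ref{thm:t4}\,(d)). To keep all $\ell$ lifted classes away from such a collapse one would need $(q_j-1)\nmid(r+2pj)$ for every $j$, which is impossible with moduli dividing $2\ell$; escaping it forces $p\mid(q_j-1)$ for some $j$, i.e.\ $q_j\equiv 1\pmod p$, and since $q_j\neq 2p+1$ this drives one to the large primes $q=2pu+1$ mentioned before the statement. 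The only degenerate case to note in passing is $q_{j_0}=3$, where no even $t$ with $2\le t\le q-3$ exists at all, so the pair is again non‑helpful and the conclusion persists.
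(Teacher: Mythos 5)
Your proposal is correct and is essentially the paper's own proof: the paper likewise uses that $r$ is even and $p\nmid\ell$ to find $j$ with $pj\equiv -r/2\pmod{\ell}$, concludes $2\ell\mid(r+2pj)$ and hence $(q_j-1)\mid t_j$, which disqualifies $(t_j,q_j)_a$ as a helpful pair. Your extra observations (why $(q-1)\mid t$ violates the window $2\le t\le q-3$, and the degenerate case $q=3$) are just explicit spellings-out of what the paper leaves implicit.
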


\begin{proof}
Since $r$ is even and $p\nmid\ell$, there exists a $j$ with $0\le j\le\ell-1$ such that $pj\equiv -r/2\pmod{\ell}$. Hence $2\ell\mid(r+2pj)$. This implies that $(q_j-1)\mid t_j$, and so $(t_j,q_j)_a$ is not a helpful pair.
\end{proof}

In case we are not able to exclude such a bad ratio, we might try at least to show that
the $k$ of a solution has to be highly divisible. In the next section we demonstrate this for the
bad ratio $6780$.

\section{Divisibility of $k$}
\label{sec:cascade}
In this section, we consider the case $a=6779=2\cdot 3389+1$ and show that for a non-trivial solution $k$ is divisible
by a large number.
We will present an heuristic argument here why we think that for this $a$ there are no solutions. We expect that a similar reasoning might work for other values of $a$ as well, once one
can establish that a smallish number like $120$ divides $k$.

\indent We start by discussing a baby example.
\begin{Prop}$~~$\\
\noindent\textup{(a)} If $a\equiv 1\,\,\text{or}\,\,2\,\,\text{or}\,\,3\pmod{5}$, then $4\mid{k}$.\\
\noindent\textup{(b)} If $a\equiv 1\,\,\text{or}\,\,3\,\,\text{or}\,\,5\pmod{7}$, then $6\mid{k}$.\\
\noindent\textup{(c)} If $a\equiv 6\,\,\text{or}\,\,7\pmod{11}$, then $10\mid{k}$.\\
\noindent\textup{(d)} If $a\equiv 2\,\,\text{or}\,\,8\,\,\text{or}\,\,11\pmod{13}$, then $12\mid{k}$.\\
\noindent\textup{(e)} If $a\equiv 1\,\,\text{or}\,\,6\pmod{13}$, then $6\mid k$.\\
\noindent\textup{(f)} If $a\equiv 1\,\,\text{or}\,\,5\pmod{11}$ and $a\equiv 15\pmod{31}$, then $10\mid{k}$.
\end{Prop}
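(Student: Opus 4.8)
The plan is to work throughout under the standing hypothesis that $aS_k(m)=m^k$ has a solution with $m\ge 4$ and $k$ even, which by Corollary~\ref{cor:c1} is the only case that can produce a genuine divisibility obstruction, and to deduce in each part that the asserted modulus $d$ divides $k$. Since $k$ is even, to prove $d\mid k$ it suffices to rule out every residue class $k\equiv c\pmod d$ with $c$ even and $0<c<d$; once all of these are excluded, the only even class left is $c=0$. Each exclusion will be delivered by a helpful pair via Lemma~\ref{helpful}: if $(t,q)_a$ is helpful and $t\equiv c\pmod{q-1}$, then $k\not\equiv c\pmod{q-1}$, and hence $k$ avoids the corresponding class modulo $d$.

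For the direct cases (a), (b), (c) the modulus is $d\in\{4,6,10\}$ and $d+1\in\{5,7,11\}$ is the \emph{only} prime $q\ge 5$ with $(q-1)\mid d$; in (d) I likewise take $q=13=d+1$. Thus for each even $c$ with $0<c<d$ I use the single pair $(c,q)_a$ with $q=d+1$ and $t=c$. Helpfulness of $(2,q)_a$ I verify through the criterion $\bigl(\frac{a^2+36a+36}q\bigr)=-1$ established earlier in Section~\ref{sec:hp}; for $t=4,6,8,10$ I instead tabulate the finite data $S_t(c)\bmod q$ and $c^t\bmod q$ for $1\le c\le q-1$ and read off the residue classes of $a$ for which $aS_t(c)\not\equiv c^t\pmod q$ holds for every $c$. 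The key bookkeeping point is that the congruence condition stated on $a$ in each part is \emph{exactly} the intersection, over the required values of $t$, of these classes; for instance in (c) the pairs $(2,11)_a,(4,11)_a,(6,11)_a,(8,11)_a$ are simultaneously helpful precisely when $a\equiv 6\text{ or }7\pmod{11}$.

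The cases (e) and (f) require a lift, and I would treat them separately. In (e) the target modulus is $d=6$, yet the hypothesis constrains $a\bmod 13$ while the only prime $q\ge 5$ with $(q-1)\mid 6$ is $q=7$; so I pass to the auxiliary prime $q=13$, whose $q-1=12$ is twice $6$. Excluding $k\equiv 2,4\pmod 6$ then amounts to excluding the four classes $k\equiv 2,4,8,10\pmod{12}$, each killed by $(t,13)_a$ with $t\in\{2,4,8,10\}$, which I check to be helpful exactly when $a\equiv 1\text{ or }6\pmod{13}$. In (f) the target is $d=10$: the prime $q=11$ (with $q-1=10$) disposes of the classes $k\equiv 2,6\pmod{10}$ under $a\equiv 1,5\pmod{11}$, whereas the classes $k\equiv 4,8\pmod{10}$ cannot be excluded by $q=11$ (the pairs $(4,11)_a$ and $(8,11)_a$ fail for $a\equiv 1,5\pmod{11}$), so I lift these to modulus $30$ and kill the six classes $k\equiv 4,14,24$ and $k\equiv 8,18,28\pmod{30}$ with pairs $(t,31)_a$, helpful under $a\equiv 15\pmod{31}$.

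The only genuinely non-mechanical step is recognizing when a lift is forced, namely precisely when the prime whose residue class of $a$ is prescribed has $q-1$ a proper multiple of $d$ (cases (e) and (f)); then a single class modulo $d$ splits into several classes modulo $q-1$, all of which must be cleared, and one must also introduce a second prime when no single auxiliary prime suffices. Once the correct prime and lifting factor are fixed, the remaining work is a bounded computation: the Legendre-symbol criterion for $t=2$, and a direct evaluation of the power-sum table modulo $q$ for $t\ge 4$, followed by the verification that the hypotheses on $a$ are exactly the residues making every required helpful pair helpful.
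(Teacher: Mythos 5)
Your proposal is correct and follows essentially the same route as the paper, which simply reads the required helpful pairs $(t,q)_a$ off Table~\ref{table2} (an extended version of it for $q=31$ in part (f)) and invokes Lemma~\ref{helpful} to exclude every nonzero even residue class of $k$ modulo the target divisor $d$. The only nitpick is in part (e): the four pairs $(t,13)_a$ with $t\in\{2,4,8,10\}$ are simultaneously helpful for $a\equiv 1,2,6,8,11\pmod{13}$, not \emph{exactly} for $a\equiv 1,6\pmod{13}$, but since the stated classes lie in that set this does not affect the argument.
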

\begin{proof}
If, e.g., $a\equiv 2\pmod{13}$, we see from Table~\ref{table2} that the 
pairs $(2,13)_a$, $(4,13)_a$, $(6,13)_a$, $(8,13)_a$ and $(10,13)_a$ are all helpful.
The final assertion follows from glancing at an extended version of Table~\ref{table2}.
\end{proof}

Now let us consider a more serious example, with $\rho=6780$ a bad ratio.
\begin{Prop}
\label{6779}
If\, $6779S_k(m)=m^k$, then $2^6\cdot 3^3\cdot 5^2\cdot7\cdot11\cdot 13\mid k$.
\end{Prop}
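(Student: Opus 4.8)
The plan is to run the \emph{method of infinite ascent} of Section~\ref{sec:hp}: starting from the single fact that $k$ is even, I would bootstrap divisibility of $k$ by larger and larger numbers, at each stage killing residue classes of $k$ with a helpful pair via Lemma~\ref{helpful}, until the divisor $2^6\cdot 3^3\cdot 5^2\cdot 7\cdot 11\cdot 13$ is reached. First the setup. If $6779S_k(m)=m^k$ then $m\notin\{2,3\}$ (as $6779$ is neither a power of $2$ nor of $3$), so $m\ge 4$, and then Lemma~\ref{lem:staudt} rules out odd $k$, so $k$ is even. Since the prime $6779$ divides $a$ it divides $m$, and Theorem~\ref{thm:t4}(a),(d) give that $6779$ is irregular and that $k\equiv r\pmod{6778}$ for an irregular pair $(r,6779)$. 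Because $6778=2\cdot 3389$ with $3389$ prime and $\gcd(3389,\,2^6\cdot 3^3\cdot 5^2\cdot 7\cdot 11\cdot 13)=1$, the only part of this congruence relevant to the target divisor is $2\mid k$; the $3389$-part plays no role, and, crucially, no helpful pair I use will ever have $q=6779$, since $q-1$ will always be coprime to $3389$ whereas $6779-1=6778$ is not. So the task reduces to deriving the stated divisibility from $2\mid k$ alone.

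For the ascent itself I would follow the recipe \emph{Ruling out congruence classes for $k$} verbatim. To upgrade a known divisor $d\mid k$ to a larger $d'$ with $d\mid d'$, I must exclude each residue $c\pmod{d'}$ with $d\mid c$, $d'\nmid c$ and $c$ even; for each such $c$ I list the primes $q\ge 5$ with $(q-1)\mid d'$, set $t\equiv c\pmod{q-1}$, and try to exhibit a helpful pair $(t,q)_a$, whose existence rules out $c$ by Lemma~\ref{helpful}. When no available $q$ resolves some $c$, I introduce a lifting factor $\ell$, replacing $d'$ by $\ell d'$ and splitting $c$ into the classes $c+jd'$ $(0\le j<\ell)$, and continue, exactly as in the worked examples of Section~\ref{exclusive}. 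Helpfulness is checked from the quadratic-residue criteria for $(2,q)_a$ and $(4,q)_a$ and from an extended Table~\ref{table2} for larger $t$, after reducing $a=6779$ modulo each relevant $q$.

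Concretely, $6779\equiv 3\pmod 7$ and $6779\equiv 6\pmod{13}$ already give $6\mid k$ as in the baby example, whereas $6779\equiv 4\pmod 5$ makes $(2,5)_{6779}$ non-helpful, so even the step to $4\mid k$ requires a lift. I would then carry the six prime powers $2^6,3^3,5^2,7,11,13$ up along such chains, drawing on primes with $64\mid(q-1)$ (e.g.\ $q\equiv 1\pmod{64}$ such as $193,257$), with $27\mid(q-1)$ (e.g.\ $109,163$), with $25\mid(q-1)$ (e.g.\ $101,151$), and with $7,11,13$ dividing $q-1$ for the linear factors; since $q-1$ must divide the working modulus, the lifts are chosen to interleave the needed $2$-, $3$- and $5$-power growth with auxiliary odd factors. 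Finally I would combine the pairwise-coprime prime-power divisibilities by the Chinese Remainder Theorem to conclude $2^6\cdot 3^3\cdot 5^2\cdot 7\cdot 11\cdot 13\mid k$.

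The hard part will be the combinatorial control of the lifting. Each lift by $\ell$ multiplies the number of classes that must be killed, and nothing a priori guarantees that every resulting class admits a helpful pair; a badly chosen lifting factor can leave stubborn classes and force ever-larger primes $q$, which is precisely the obstruction quantified by Proposition~\ref{prop:t10}. The reason the computation nonetheless closes here is that the target divisor is coprime to the Sophie Germain prime $3389$, so one never has to attack the genuinely obstructed congruence $k\equiv r\pmod{2\cdot 3389}$: every class to be eliminated lives in a modulus built only from $2,3,5,7,11,13$, where helpful pairs are abundant (a pair $(t,q)_a$ fails to be helpful only under special residue conditions on $a$). The verification is then a finite, table-driven bookkeeping task, analogous to the one recorded in Table~\ref{table3}: at each stage choose the lifting factors so that every class meets a helpful pair, and record the pairs used.
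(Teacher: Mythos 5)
Your setup is fine (reduction to $m\ge 4$, $k$ even, and the observation that $6779\mid m$ forces $k\equiv r\pmod{6778}$ for an irregular pair $(r,6779)$), but your next move --- discarding the $3389$-part of that congruence and trying to ``derive the stated divisibility from $2\mid k$ alone'' --- throws away exactly the information that makes the paper's argument work. There is a \emph{unique} irregular pair $(3994,6779)$, so the paper starts from the single residue class $k\equiv 3994\pmod{6778}$ and lifts \emph{that one class}: every working modulus in its thirteen steps is a multiple of $6778$, and at each stage only a handful of lifted classes must be killed (e.g.\ lifting by $3$ gives three classes mod $20334$, of which two die via $(2,7)_3$ and $(4,7)_3$ and the survivor already yields $6\mid k$). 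If instead you start from $2\mid k$, then to reach even $2^6\mid k$ you must exterminate \emph{all} even classes $c\not\equiv 0\pmod{64}$, and similarly for $3^3$, $5^2$, etc.; this is a far larger collection of congruence classes, the helpful pairs needed to kill them are different from those in the paper (your classes modulo $52$, $6300$, etc.\ are not the ones the paper's pairs address), and nothing guarantees the process closes --- as you yourself note, a pair $(t,q)_a$ is only heuristically helpful with probability about $1/e$. Your remark that no helpful pair has $3389\mid(q-1)$ is true of the paper's list but beside the point: the role of $3389$ is not to supply moduli for helpful pairs but to pin $k$ down to one class per step.

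The second, and decisive, gap is that you have not actually produced the proof. The entire mathematical content of this proposition \emph{is} the explicit, verified chain of lifting factors and helpful pairs; ``at each stage choose the lifting factors so that every class meets a helpful pair, and record the pairs used'' is a description of the task, not its execution. Beyond the observation that $6779\equiv 3\pmod 7$ and $6779\equiv 6\pmod{13}$ give $6\mid k$ (which is correct), you exhibit no lifting factors, no residue classes, and no helpful pairs, and you give no argument that your modified (and harder) starting point admits such a chain at all. Compare the paper's Steps $1$--$13$, which list every modulus, every surviving class, and every pair $(t,q)_a$ used; that data cannot be waved at, it must be supplied and checked.
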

\begin{proof}
We start with the congruence $k\equiv 3994\pmod{6778}$, which is a consequence of Theorem~\ref{thm:t4}~(d) and the fact that there is only one irregular pair $(3994,6779)$ corresponding to $6779$. 

\emph{Step $1$.} We have $k\equiv 3994\,\,\text{or}\,\,10772\,\,\text{or}\,\,17550\pmod{20334}$. If 
$k\equiv 3994\,\,\text{or}\,\,10772\pmod{20334}$ then $k\equiv 2\,\,\text{or}\,\,4\pmod{6}$, which is impossible as $(2,7)_3$ and $(4,7)_3$ are helpful pairs. Hence $k\equiv 17550\pmod{20334}$ and $2\cdot 3\mid k$.

\emph{Step $2$.} We have $k\equiv 17550\,\,\text{or}\,\,37884\,\,\text{or}\,\,58218\pmod{61002}$. The case $k\equiv 58218\pmod{61002}$ is impossible, since in this case $k\equiv 6\pmod{18}$ and $(6,19)_{15}$ is a helpful pair. Hence $k\equiv 17550\,\,\text{or}\,\,37884\pmod{61002}$.

\emph{Step $3$.} We have $k\equiv 17550\,\,\text{or}\,\,37884\,\,\text{or}\,\,78552\,\,\text{or}\,\,98886\pmod{122004}$. If $k\equiv 17550\,\,\text{or}\,\,98886\pmod{122004}$, then $k\equiv 18\,\,\text{or}\,\,30\pmod{36}$, which is impossible as $(18,37)_8$ and $(30,37)_8$ are helpful pairs. Hence $k\equiv 37884\,\,\text{or}\,\,78552\pmod{122004}$.

\emph{Step $4$.} We have $k\equiv 37884\,\,\text{or}\,\,78552\,\,\text{or}\,\,159888\,\,\text{or}\,\,200556\pmod{244008}$. If $k\equiv 37884\,\,\text{or}\,\,159888\,\,\text{or}\,\,200556\pmod{244008}$, then $k\equiv 12\,\,\text{or}\,\,36\,\,\text{or}\,\,48\pmod{72}$, which is impossible as $(12,73)_{63}$, $(36,73)_{63}$ and $(48,73)_{63}$ are helpful pairs. Hence $k\equiv 78552\pmod{244008}$ and $2^3\cdot 3^2\mid k$.

\emph{Step $5$.} We have $k\equiv 78552\,\,\text{or}\,\,322560\pmod{488016}$. In the case $k\equiv 78552\pmod{488016}$ we have $k\equiv 8\pmod{16}$, which is impossible as $(8,17)_{13}$ is a helpful pair. Hence $k\equiv 322560\pmod{488016}$ and $2^4\cdot 3^2\mid k$.

\emph{Step $6$.} We have $k\equiv 322560\,\,\text{or}\,\,810576\pmod{976032}$. The case $k\equiv 810576\pmod{976032}$ is impossible, since in this case $k\equiv 48\pmod{96}$ and $(48,97)_{86}$ is a helpful pair. Hence $k\equiv 322560\pmod{976032}$ and $2^5\cdot 3^2\mid k$.

\emph{Step $7$.} We have $k\equiv 322560\,\,\text{or}\,\,1298592\pmod{1952064}$. In the case $k\equiv 1298592\pmod{1952064}$ we have $k\equiv 288\pmod{576}$. The latter is impossible, since $(288,577)_{432}$ is a helpful pair. Hence $k\equiv 322560\pmod{1952064}$ and $2^6\cdot 3^2\mid k$.

\emph{Step $8$.} We have $k\equiv 322560\,\,\text{or}\,\,2274624\,\,\text{or}\,\,4226688\pmod{5856192}$. The case $k\equiv 322560\pmod{5856192}$ is impossible, since in this case $k\equiv 288\pmod{432}$ and $(288,433)_{284}$ is a helpful pair. In the case $k\equiv 2274624\pmod{5856192}$ we have $k\equiv 36\pmod{108}$, which is impossible as $(36,109)_{21}$ is a helpful pair. Hence $k\equiv 4226688\pmod{5856192}$ and $2^6\cdot 3^3\mid k$.

\emph{Step $9$.} We have $k\equiv 4226688\,\,\text{or}\,\,10082880\,\,\text{or}\,\,15939072\,\,\text{or}\,\,21795264\,\,\text{or}\,\,27651456\pmod{29280960}$. If $k\equiv 15939072\,\,\text{or}\,\,27651456\pmod{29280960}$, then $k\equiv 6\,\,\text{or}\,\,12\pmod{30}$, which is impossible as $(6,31)_{21}$ and $(12,31)_{21}$ are helpful pairs. In the case $k\equiv 21795264\pmod{29280960}$ we have $k\equiv 24\pmod{60}$, which is impossible since $(24,61)_8$ is a helpful pair. The case $k\equiv 4226688\pmod{29280960}$ is also impossible, since in this case $k\equiv 108\pmod{180}$ and $(108,181)_{82}$ is a helpful pair. Hence $k\equiv 10082880\pmod{29280960}$ and $2^6\cdot 3^3\cdot 5\mid k$.

\emph{Step $10$.} We have $k\equiv 10082880+29280960j\pmod{204966720}$ for some $j\in\{0,1,\dots,6\}$. If $j\in\{0,1,3,6\}$ then $k\equiv 8\,\,\text{or}\,\,12\,\,\text{or}\,\,16\,\,\text{or}\,\,24\pmod{28}$, which is impossible as $(8,29)_{22}$, $(12,29)_{22}$, $(16,29)_{22}$ and $(24,29)_{22}$ are helpful pairs. If $j=4$ or 5, then we have $k\equiv 6\,\,\text{or}\,\,18\pmod{42}$. The latter is impossible, since $(6,43)_{28}$ and $(18,43)_{28}$ are helpful pairs. Hence $k\equiv 68644800\pmod{204966720}$ and ${2^6\cdot 3^3\cdot 5\cdot 7\mid k}$.

\emph{Step $11$.} We have $k\equiv 68644800+204966720j\pmod{1024833600}$ for some $j\in\{0,1,2,3,4\}$. If $j\in\{1,2,3\}$, then $k\equiv 20\,\,\text{or}\,\,40\,\,\text{or}\,\,60\pmod{100}$, which is impossible as $(20,101)_{12}$, $(40,101)_{12}$ and $(60,101)_{12}$ are helpful pairs. The case $j=4$ is also impossible, since in this case $k\equiv 3780\pmod{6300}$ and $(3780,6301)_{478}$ is a helpful pair. Hence $k\equiv 68644800\pmod{1024833600}$ and $2^6\cdot 3^3\cdot 5^2\cdot 7\mid k$.

\emph{Step $12$.} We have $k\equiv 68644800+1024833600j\pmod{11273169600}$ for some $j\in\{0,1,\dots,10\}$. If $j\in\{0,2,3,6,8\}$, then $k\equiv 2\,\,\text{or}\,\,4\,\,\text{or}\,\,12\,\,\text{or}\,\,18\,\,\text{or}\,\,20\pmod{22}$, which is impossible as $(2,23)_{17}$, $(4,23)_{17}$, $(12,23)_{17}$, $(18,23)_{17}$ and $(20,23)_{17}$ are helpful pairs. If $j\in\{1,5,7\}$, then $k\equiv 30\,\,\text{or}\,\,36\,\,\text{or}\,\,54\pmod{66}$, which is impossible since $(30,67)_{12}$, $(36,67)_{12}$ and $(54,67)_{12}$ are helpful pairs. In the case $j=10$ we have $k\equiv 16\pmod{88}$, which is impossible as $(16,89)_{15}$ is a helpful pair. The case $j=9$ is also impossible, since in this case $k\equiv 160\pmod{352}$ and $(160,353)_{72}$ is a helpful pair. Hence $k\equiv 4167979200\pmod{11273169600}$ and $2^6\cdot 3^3\cdot 5^2\cdot 7\cdot 11\mid k$.

\emph{Step $13$.} We have $k\equiv 4167979200+11273169600j\pmod{146551204800}$ for some $j\in\{0,1,\dots,12\}$. If $j\in\{1,3,4,5,7,8,10\}$, then $k\equiv 12\,\,\text{or}\,\,20\,\,\text{or}\,\,24\,\,\text{or}\,\,28\,\,\text{or}\,\,36\,\,\text{or}$ $40\,\,\text{or}\,\,48\pmod{52}$, which is impossible as $(12,53)_{48}$, $(20,53)_{48}$, $(24,53)_{48}$, $(28,53)_{48}$, $(36,53)_{48}$, $(40,53)_{48}$ and $(48,53)_{48}$ are helpful pairs. If $j=2$ or $12$, then $k\equiv 30\,\,\text{or}\,\,42\pmod{78}$, which is impossible, since $(30,79)_{64}$ and $(42,79)_{64}$ are helpful pairs. If $j=0$ or $6$, then $k\equiv 60\,\,\text{or}\,\,110\pmod{130}$, which is impossible as $(60,131)_{98}$ and $(110,131)_{98}$ are helpful pairs. The case $j=9$ is also impossible, since in this case $k\equiv 96\pmod{156}$ and $(96,157)_{28}$ is a helpful pair. Hence $k\equiv 128172844800\pmod{146551204800}$ and $2^6\cdot 3^3\cdot 5^2\cdot 7\cdot 11\cdot 13\mid k$.
\end{proof}

It seems that the type of argument 
used in the proof of Proposition~\ref{6779}    
can be continued to deduce that more and more small prime
factors must divide $k$. Given a prime $q\ge 5$ and $2\le t\le q-3$ even, one would heuristically expect 
that $(t,q)_a$ is helpful with probability $(1-1/q)^{q-1}$ which tends to $1/e$, on assuming that the values
$S_t(c)$ are randomly distributed modulo $q$. The numerical data obtained so far turn out to be consistent with this.

For the original Erd\H{o}s-Moser equation it is known (cf.~\cite{Kellner1, MRU}) that $N\mid k$ with

$$N=2^8\cdot 3^5\cdot 5^4\cdot 7^3\cdot 11^2\cdot 13^2\cdot 17^2\cdot 19^2\cdot 23\cdots
997>5.7462\cdot 10^{427}.$$
An heuristic argument can be given suggesting that if, say $L_v:={\rm lcm}(1,2,\ldots,v)$ divides $k$, with tremendously
high likelihood we can infer that $L_w$ divides $k$, where $w$ is the smallest prime  not dividing $L_v$. 
It is already enough to have $v\ge 11$ here.
To deduce that $k$ is divisible by say $24$ 
might be delicate, but once one has $L_v\mid k$ say, there is an explosion of further 
helpful pairs one can use to establish divisibility of $k$ by an even larger integer.
To add the first prime $w$ not dividing $L_v$, one needs to have only a number of helpful pairs
that is roughly linear in $v$, whereas an exponential number (in $v$) is available.
However, the required computation time goes sharply up with increasing $w$.

This result gives a lower bound of $10^{427}$ for $k$, which is modest in comparison with the lower bound
obtained by Moser. However, as argued by Gallot et al.~\cite{GMZ}, a result of the form $N\mid{k}$ leads to
an expected lower bound $m>10^{257N}$. For the Kellner-Erd\H{o}s-Moser equation we likewise expect a
result of the form $N\mid{k}$ to lead to a lower bound for $m$ that is exponential in $N$.

Unfortunately, the authors are not aware of any systematic approach that would
allow one to prove a result of the type that if $aS_k(m)=m^k$, then $120\mid k$, for every $a\ge 1$. 
Some preliminary work on this 
for the equation $S_k(m)=am^k$ was done by the second author's intern Muriel Lang \cite{Muriel} in 2009.

\section{Lower bound for $m$}
\label{sec:lower}
The aim of this section is to establish Theorem~\ref{thm:t6}. The proof rests on Lemmas~\ref{lem:l11} and \ref{lem:l12}.
\begin{Lem}
\label{lem:l11}
Suppose that $aS_k(m)=m^k$ with $m\ge 4$ and $k$ even. Then $m-1$ and $2m-1$ are square-free, and if $p$ is a prime divisor of 
$(m-1)(2m-1)$, then $(p-1)\mid k$.
\end{Lem}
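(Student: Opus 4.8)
The plan is to prove the two assertions about $m-1$ and $2m-1$ by adapting the Voronoi-type reasoning that already yielded $m^2\mid U_k$ (Lemma~\ref{lem:l10}), but now centering the power-sum computation at the points $m-1$ and $2m-1$ rather than at $m$. The key observation is that the hypothesis $aS_k(m)=m^k$ forces strong divisibility properties on nearby power sums through the Carlitz--von Staudt congruence (Lemma~\ref{lem:staudt}) and the summation formula (Lemma~\ref{lem:summation}), so that examining $S_k$ at shifted arguments should reveal that $m-1$ and $2m-1$ cannot carry repeated prime factors and that each of their prime divisors $p$ must satisfy $(p-1)\mid k$.

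First I would set up the relationship between $S_k(m)$ and the power sums based at $m-1$ and $2m-1$. The cleanest starting point is the elementary telescoping identity $S_k(2m-1)=S_k(m)+\sum_{j=m}^{2m-2}j^k$, together with the symmetry $j^k\equiv(-(2m-1-j))^k=(2m-1-j)^k\pmod{2m-1}$ valid because $k$ is even; this pairs the terms in $S_k(2m-1)$ and shows $S_k(2m-1)\equiv 0$ or is controlled modulo $2m-1$. Similarly, centering at $m-1$ and using $j^k\equiv(m-1-j)^k\pmod{m-1}$ gives information modulo $m-1$. Combining these congruences with $aS_k(m)=m^k$ and the fact that $\gcd(m,m-1)=\gcd(m,2m-1)=1$ should pin down $\ord_p$ of the relevant quantities for each prime $p\mid(m-1)(2m-1)$.

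Second, to force $(p-1)\mid k$, I would run a contrapositive argument in the spirit of Lemma~\ref{lem:l6}: suppose $p\mid(m-1)(2m-1)$ but $(p-1)\nmid k$. Then by the von Staudt--Clausen theorem (Lemma~\ref{lem:clausen}) $p\nmid V_k$, so $B_k$ is $p$-integral, and the Voronoi congruence (Lemma~\ref{lem:voronoi}) applied at the argument $m-1$ (respectively $2m-1$) becomes $S_k(m-1)\equiv(B_k/1)\cdot(m-1)\pmod{(m-1)^2}$-type relation. Since $S_k(m)=S_k(m-1)+(m-1)^k$ and $aS_k(m)=m^k$, one extracts a congruence that forces $p^2\mid$ something coprime to $p$, a contradiction; this simultaneously yields square-freeness and the divisibility $(p-1)\mid k$. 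The main obstacle I expect is keeping the two $p$-adic valuations bookkeeping straight when $p$ divides one of $m-1$, $2m-1$ to a higher power: one must verify that the error terms in the Voronoi/summation expansions genuinely have strictly larger $p$-adic valuation than the main term, exactly as in the delicate estimates \eqref{order1}--\eqref{order3} of Theorem~\ref{thm:t4}. The cleanest route is probably to treat $m-1$ and $2m-1$ uniformly by writing $n\in\{m-1,2m-1\}$, noting $m^k\equiv(\pm1)^k=1\pmod n$ forces $aS_k(m)\equiv 1\pmod n$, and then showing a repeated prime factor or a prime with $(p-1)\nmid k$ is incompatible with this unit congruence.
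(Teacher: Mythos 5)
Your overall strategy is workable and one of your suggested routes does yield a correct proof, but it is organized differently from the paper's and contains one factual slip. The paper's proof is far more elementary than anything involving the Voronoi congruence or the estimates of Theorem~\ref{thm:t4}: it uses only the two reductions $S_k(m-1)\equiv S_k(m)\pmod{m-1}$ and $S_k(2m-1)\equiv 2S_k(m)\pmod{2m-1}$ (the latter by exactly the pairing $j^k+(2m-1-j)^k$ you describe -- note the outcome is $2S_k(m)$, not $0$), followed by a single application of the Carlitz--von Staudt congruence (Lemma~\ref{lem:staudt}) to $S_k(n)$ for $n\in\{m-1,2m-1\}$. This yields $a\sum_{p\mid n,\,(p-1)\mid k}\frac np+c\,m^k\equiv 0\pmod{n}$ with $c\in\{1,2\}$, and both claims then drop out simultaneously: if some $p\mid n$ had $(p-1)\nmid k$, or if $p^2\mid n$, then every term of the sum would be divisible by $p$, forcing $p\mid m^k$ and hence $p\mid m$, contradicting $\gcd(m,n)=1$. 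Your Voronoi variant also works: for $p\mid n$ with $(p-1)\nmid k$ one has $p\nmid V_k$ and $V_kS_k(n)\equiv U_kn\equiv 0\pmod p$ while $S_k(n)\equiv c\,a^{-1}m^k$ is a unit mod $p$; and $p^2\mid n$ would force $p^2\mid V_k$, contradicting von Staudt--Clausen. But this route needs an extra input (square-freeness of $V_k$) that the Carlitz--von Staudt argument gets for free, and none of the delicate $p$-adic estimates \eqref{order1}--\eqref{order3} you worry about are required; the bookkeeping you fear simply does not arise.

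Two corrections to your sketch. First, $m\not\equiv\pm1\pmod{2m-1}$ for $m\ge 4$; rather $2m\equiv 1$, so $m^k\equiv 2^{-k}\pmod{2m-1}$, and the claimed congruence $aS_k(m)\equiv 1\pmod{2m-1}$ is false as stated. This does not sink the argument, because all that is actually used is that $m^k$ is a \emph{unit} modulo $m-1$ and modulo $2m-1$, which holds since $\gcd(m,(m-1)(2m-1))=1$ -- but you should phrase it that way. Second, the step ``$S_k(n)$ is a unit mod $p$'' requires $\gcd(a,(m-1)(2m-1))=1$, which must be noted explicitly (it holds because every prime divisor of $a$ divides $m$, by Corollary~\ref{cor:s=2}).
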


\begin{proof}
Since
$$
S_k(m-1)=S_k(m)-(m-1)^k\equiv S_k(m)\pmod{m-1},
$$
Lemma~\ref{lem:staudt} yields
\begin{equation}
\label{eq10}
a\sum_{\substack{p\mid(m-1)\\ (p-1)\mid k}}\frac{m-1}p +m^k\equiv 0\pmod{a(m-1)}.
\end{equation}
Note that if $p\mid(m-1)$ and $(p-1)\nmid k$, then $p\mid m$, a contradiction that shows
that $p\mid(m-1)$ implies $(p-1)\nmid k$.
If $p^2\mid(m-1)$ it follows again that $p\mid m$, a contradiction that shows that
$m-1$ is square-free.\\ 
Note that
$$
S_k(2m-1)=\sum_{j=1}^{m-1}(j^k+(2m-1-j)^k)\equiv 2S_k(m)\pmod{2m-1}.
$$
Then, again by Lemma~\ref{lem:staudt},
\begin{equation}
\label{eq11}
a\sum_{\substack{p\mid(2m-1)\\(p-1)\mid k}}\frac{2m-1}p+2m^k\equiv 0\pmod{a(2m-1)},
\end{equation}
from which we deduce that $2m-1$ is square-free and each prime $p$ dividing $2m-1$ satisfies $(p-1)\mid k$.
\end{proof}

\begin{cor}
\label{cor:c3mod4}
Suppose that $aS_k(m)=m^k$ with $m\ge 4$ and $k$ even, then $m\equiv 3\pmod{4}$.
\end{cor}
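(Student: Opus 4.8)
The plan is to read off the conclusion by combining the two structural facts that have already been put in place, namely that $m$ is odd (indeed coprime to $6$) by Corollary~\ref{cor:c1}, and that $m-1$ is square-free by Lemma~\ref{lem:l11}. Since the values $m\equiv 1$ and $m\equiv 3\pmod 4$ are the only two options for an odd $m$, the entire content of the statement is the exclusion of the residue $m\equiv 1\pmod 4$, and this follows from a single divisibility obstruction.

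Concretely, I would argue as follows. Because $\gcd(m,6)=1$, the integer $m$ is odd, so $m-1$ is even; thus $2\mid(m-1)$. Now suppose, for contradiction, that $m\equiv 1\pmod 4$. Then $4\mid(m-1)$, i.e.\ $2^2\mid(m-1)$, contradicting the square-freeness of $m-1$ guaranteed by Lemma~\ref{lem:l11}. Hence the $2$-adic valuation of $m-1$ is exactly $1$, equivalently $m-1\equiv 2\pmod 4$, which is precisely $m\equiv 3\pmod 4$.

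There is really no hard step here: once Lemma~\ref{lem:l11} is granted, the corollary is immediate, the only genuine work having already been carried out in proving that $m-1$ is square-free (which in turn rested on Corollary~\ref{cor:c1} and Lemma~\ref{lem:staudt}). If one wished to avoid explicitly invoking $\gcd(m,6)=1$, one could instead note directly from Lemma~\ref{lem:l11} that $m-1$ is square-free and recall that $m$ is odd, again forcing $m-1\equiv 2\pmod 4$; either route terminates in one line.
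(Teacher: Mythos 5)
Your proof is correct and is exactly the one-line deduction the paper intends: the corollary is stated without proof immediately after Lemma~\ref{lem:l11}, and the implicit argument is precisely yours, namely that $m$ is odd by Corollary~\ref{cor:c1}, so $2\mid(m-1)$, and square-freeness of $m-1$ forbids $4\mid(m-1)$, forcing $m\equiv 3\pmod{4}$. No gaps, and no meaningful difference from the paper's route.
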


\begin{Lem}
\label{lem:l12}
Suppose that $aS_k(m)=m^k$ with $m\ge 4$ and $k$ even and let $p$ be a prime divisor of $(m+1)(2m+1)$. If $(p-1)\mid k$ then $\ord_p((m+1)(2m+1))=\ord_p(a+1)+1$, otherwise $\ord_p((m+1)(2m+1))\le\ord_p(a+1)$.
\end{Lem}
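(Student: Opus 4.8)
The plan is to treat the two coprime factors $m+1$ and $2m+1$ separately and, for each prime $p$ dividing one of them, to compute $\ord_p$ of the relevant power sum exactly and match it against $\ord_p(a+1)$. First I would record the arithmetic that makes everything work: from $aS_k(m)=m^k$ every prime divisor of $a$ divides $m$, and since $\gcd(m,m+1)=\gcd(m,2m+1)=1$, any prime $p\mid(m+1)(2m+1)$ satisfies $p\nmid m$, hence $p\nmid a$; moreover $\gcd(m+1,2m+1)=1$, so $p$ divides exactly one factor and $\ord_p((m+1)(2m+1))$ equals $\ord_p$ of that factor. The key exact identity for the first factor is
\begin{equation*}
aS_k(m+1)=aS_k(m)+am^k=m^k(a+1),
\end{equation*}
which, together with $p\nmid am$, gives at once $\ord_p(S_k(m+1))=\ord_p(a+1)$ for every $p\mid m+1$.

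The crux is a clean formula for the $p$-adic order of a power sum. Writing $p^e\parallel n$, grouping the summands of $S_k(n)=\sum_{j=0}^{n-1}j^k$ into residue classes modulo $p^e$ yields the averaging congruence $S_k(n)\equiv \frac{n}{p^e}S_k(p^e)\pmod{p^e}$. Feeding in Carlitz--von Staudt (Lemma~\ref{lem:staudt}) for $S_k(p^e)$ --- whose only relevant prime is $p$ itself --- gives $S_k(p^e)\equiv -p^{e-1}\pmod{p^e}$ when $(p-1)\mid k$ and $S_k(p^e)\equiv 0\pmod{p^e}$ otherwise. Since $p\nmid n/p^e$, I obtain
\begin{equation*}
\ord_p(S_k(n))=e-1 \text{ if } (p-1)\mid k,\qquad \ord_p(S_k(n))\ge e \text{ if } (p-1)\nmid k.
\end{equation*}
Applying this with $n=m+1$, $e=\ord_p(m+1)$, and comparing with $\ord_p(S_k(m+1))=\ord_p(a+1)$ settles the first factor: $\ord_p(m+1)=\ord_p(a+1)+1$ when $(p-1)\mid k$, and $\ord_p(m+1)\le\ord_p(a+1)$ otherwise. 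This covers $p=2$ as well, where $(p-1)=1\mid k$ automatically.

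For the second factor I lack an exact identity and must instead use the pairing $S_k(2m+1)=\sum_{j=1}^m\bigl(j^k+(2m+1-j)^k\bigr)\equiv 2S_k(m+1)\pmod{p^e}$, valid for $p^e\parallel 2m+1$ because $2m+1-j\equiv -j$ and $k$ is even; multiplying by $a$ and invoking the identity above turns this into $aS_k(2m+1)\equiv 2m^k(a+1)\pmod{p^e}$. Now the order formula with $n=2m+1$ finishes the argument: if $(p-1)\mid k$ then $\ord_p(aS_k(2m+1))=e-1<e$, so the congruence forces $\ord_p(2m^k(a+1))=e-1$, whence $\ord_p(a+1)=e-1$ (here $p$ is odd, so $p\nmid 2m$); if $(p-1)\nmid k$ then $aS_k(2m+1)\equiv 0\pmod{p^e}$, forcing $p^e\mid 2m^k(a+1)$ and $\ord_p(a+1)\ge e$. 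I expect the main obstacle to be precisely this asymmetry: the second factor only yields a congruence modulo $p^e$ rather than an exact identity, so the comparison of orders must be done with care --- the argument is clean exactly because the ``error'' term in each case lives at level $\ge e$, while the controlling order is either $e-1$ (forcing equality) or $\ge e$ (forcing the inequality).
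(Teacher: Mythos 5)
Your proof is correct and follows essentially the same route as the paper's: the exact identity $aS_k(m+1)=(a+1)m^k$, the pairing congruence $S_k(2m+1)\equiv 2S_k(m+1)\pmod{2m+1}$ for even $k$, and Carlitz--von Staudt to pin down the $p$-adic order of the power sums (the paper applies the lemma globally modulo $a(m+1)$ and $a(2m+1)$, while you localize at each $p^e$ via the averaging congruence, but this is the same computation). No gaps.
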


\begin{proof}
Observe that
$aS_k(m+1)=(a+1)m^k$. Invoking Lemma~\ref{lem:staudt}, we obtain
\begin{equation}
\label{eq12}
a\sum_{\substack{p\mid(m+1)\\(p-1)\mid k}}\frac{m+1}p+(a+1)m^k\equiv 0\pmod{a(m+1)}.
\end{equation}
Since $p\mid a$ implies $p\mid m$ it follows that $\gcd(a,m+1)=1$. Thus
\begin{align*}
&\ord_p(m+1)=\ord_p(a+1)+1&\text{if $p\mid(m+1)$ and $(p-1)\mid k$,}\\
&\ord_p(m+1)\le\ord_p(a+1)&\text{if $p\mid(m+1)$ and $(p-1)\nmid k$.}
\end{align*}
Further, from
$$
aS_k(2m+1)=a\sum_{j=1}^m (j^k+(2m+1-j)^k)\equiv 2aS_k(m+1)\equiv 2(a+1)m^k\pmod{a(2m+1)}
$$
we deduce that
\begin{equation}
\label{eq13}
a\sum_{\substack{p\mid(2m+1)\\(p-1)\mid k}}\frac{2m+1}p+2(a+1)m^k\!\!\equiv 0\pmod{a(2m+1)},
\end{equation}
and so
\begin{align*}
&\ord_p(2m+1)=\ord_p(a+1)+1&\text{if $p\mid(2m+1)$ and $(p-1)\mid k$,}\\
&\ord_p(2m+1)\le\ord_p(a+1)&\text{if $p\mid(2m+1)$ and $(p-1)\nmid k$.}
\end{align*}
Since $m+1$ and $2m+1$ are coprime, the asserted result follows.
\end{proof}
\noindent Part (g) below arose in collaboration with Jan B\"uthe (University of Bonn) and we only
provide a sketch of the proof here. In a planned sequel to this paper \cite{BBM} further
details will be given. We remark that if the condition $m\equiv 1\pmod{30}$ is replaced
by 
$$\sum_{p|(m-1)}\frac{1}{p}+\frac{1}{a}>1,$$
(cf.~equation \eqref{laatsteloodjes}) the same conclusion holds true.
\begin{Thm}
\label{thm:t6}
Assume that $a>1$ is square-free and that $aS_k(m)=m^k$ with $m\ge 4$ and $k$ even. 
Put $a_1=\gcd(a+1,m+1)$ and $a_2=\gcd(a+1,2m+1)$. 
Put $$M=\frac{(m^2-1)(4m^2-1)}{6a_1a_2}.$$
Then

\noindent\textup{(a)} $m>a$;

\noindent\textup{(b)} $m-1$, $2m-1$, $(m+1)/a_1$, and $(2m+1)/a_2$ are all square-free;

\noindent\textup{(c)} if $p$ divides at least one of the above four integers, then $(p-1)\mid k$;

\noindent\textup{(d)} $m>3.4429\cdot 10^{82}$;

\noindent\textup{(e)} the number $M$ is square-free and has at least $139$ prime factors;

\noindent\textup{(f)} if $m\equiv 1\pmod{3}$, then $m>1.485\cdot 10^{9321155}$ and the number $M$ has at least\linebreak\hphantom{\textup{(f)}} $4990906$ prime factors;

\noindent\textup{(g)} if $m\equiv 1\pmod{30}$, then $m>10^{4\cdot 10^{20}}$ and the number $M$ has at least\linebreak\hphantom{\textup{(g)}} $75760524354901799895$ prime factors.

\end{Thm}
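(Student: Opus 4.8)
The plan is to dispose of the structural statements (a)--(c) first, then to turn the squarefreeness in (b) into a clean counting problem for $M$, and finally to bound $\omega(M)$ from below by a Moser-type reciprocal-sum argument.

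For (b) and (c) I would argue locally at each prime. The assertions about $m-1$ and $2m-1$ are exactly Lemma~\ref{lem:l11}. For $(m+1)/a_1$ and $(2m+1)/a_2$ I would feed the dichotomy of Lemma~\ref{lem:l12} into $a_1=\gcd(a+1,m+1)$: if $p\mid m+1$ and $(p-1)\mid k$ then $\ord_p(m+1)=\ord_p(a+1)+1$, so $\ord_p a_1=\ord_p(a+1)$ and $\ord_p((m+1)/a_1)=1$; if $p\mid m+1$ and $(p-1)\nmid k$ then $\ord_p(m+1)\le\ord_p(a+1)$, whence $\ord_p((m+1)/a_1)=0$. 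This gives at once that $(m+1)/a_1$ is squarefree and that every prime dividing it has $(p-1)\mid k$; the same for $(2m+1)/a_2$. For (a), Corollary~\ref{cor:s=2} together with square-freeness of $a$ yields $a\mid m$, hence $m\ge a$, and I would exclude $m=a$ by a short direct argument. The point of (a) is that it forces the defect $\frac{a+1}{m+1}$ (and its $2m+1$ analogue) to be a proper fraction $<1$, which is what the near-integer relations below require.

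For (e) I would first show $M$ is squarefree. By the previous paragraph the four numbers $m-1$, $2m-1$, $(m+1)/a_1$, $(2m+1)/a_2$ are individually squarefree; by Corollary~\ref{cor:c1}, $\gcd(m,6)=1$, so among them $2$ divides exactly $m-1$ and $(m+1)/a_1$, while $3$ divides exactly two of the four (one of $m\pm1$ and one of $2m\pm1$). Since $k$ is even, $2$ and $3$ occur to the first power in each factor they meet, hence to the second power in the product of the four numbers, and division by the $6$ in $M$ removes precisely these repetitions, leaving $M$ squarefree. The size bounds then rest on the elementary inequality $M=\frac{(m^2-1)(4m^2-1)}{6a_1a_2}<\frac23 m^4$ combined with the fact that a squarefree integer with $N=\omega(M)$ prime factors is at least $\prod_{i=1}^{N}p_i$, the product of the first $N$ primes; thus $m^4>\frac32\prod_{i=1}^{N}p_i$, and Mertens-type estimates for $\prod_{i=1}^{N}p_i$ convert a lower bound on $\omega(M)$ directly into the numerical lower bounds for $m$ in (d), (f), (g). Everything therefore reduces to bounding $\omega(M)$.

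To bound $\omega(M)$ I would run Moser's method. Reducing \eqref{eq10}--\eqref{eq13} modulo $m-1$, $2m-1$, $m+1$, $2m+1$ and using Lemma~\ref{lem:staudt} together with $m^k\equiv(\pm1)^k$ and $aS_k(m)=m^k$, each of the four numbers produces an exact relation of the shape $a\sum_{p}\frac1p+(\text{defect})=J\in\mathbb Z_{>0}$, the sum running over the primes of that number and the defect being a proper fraction (here (a) is used). Since every prime dividing $M$ appears in such a relation and satisfies $(p-1)\mid k$, a careful combination pins $\sum_{p\mid M}\frac1p$ just below an explicit positive integer; because $\sum_{p\le x}\frac1p$ grows only like $\log\log x$, reaching this threshold forces $\omega(M)\ge139$, which is (e) and, via the primorial estimate, (d). For (f) the hypothesis $m\equiv1\pmod3$ forces $3\mid(m-1)$ and $3\mid(2m+1)$, so the reciprocal sums gain an extra $\frac13$, and the inequality \eqref{laatsteloodjes}, $\sum_{p\mid(m-1)}\frac1p+\frac1a>1$, pushes the governing integer $J$ up by one; this raises the threshold and yields $\omega(M)\ge4990906$ and the stated bound on $m$. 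Part (g), under $m\equiv1\pmod{30}$, forces yet more small primes into $M$ and raises $J$ further; following the paper I would only sketch it and defer the details to \cite{BBM}. The main obstacle throughout is the bookkeeping in this last step: isolating the exact integer $J$, and hence the precise counts $139$, $4990906$, and beyond, requires controlling the defect terms for $2m\pm1$, where $m^k$ is congruent to neither $+1$ nor $-1$, and matching the reciprocal threshold to the primorial estimate accurately enough to reach the asserted constants.
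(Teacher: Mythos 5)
Your overall strategy coincides with the paper's: (a)--(c) via Lemmas~\ref{lem:l11} and \ref{lem:l12}, square-freeness of $M$ from the fact that only $2$ and $3$ can divide two of the four factors (do note you still need the one-line check that a prime $p>3$ dividing two of $m-1$, $2m-1$, $(m+1)/a_1$, $(2m+1)/a_2$ would divide $2$ or $3$), and then a Moser-style reciprocal-sum argument built on the Carlitz--von Staudt congruences \eqref{eq10}--\eqref{eq13}. However, the step you set aside as ``isolating the exact integer $J$'' is not bookkeeping; it is the entire quantitative content, and without it the argument does not come close to $139$ prime factors. A priori each of the normalized relations \eqref{eq14}--\eqref{eq17} only says its left-hand side is a positive integer, so the four together give $\ge 4$; after subtracting the defect terms (which total slightly more than $2+\frac12+\frac13$) you would only get $\sum_{p\mid M}1/p>1.16\ldots$, forcing $\omega(M)\ge 4$ or so, not $139$. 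The paper's key observation is that the relation \eqref{eq16} for $(m+1)/a_1$ already exceeds $1$: since $(m+1)/a_1$ is even (Corollary~\ref{cor:c3mod4} with Lemma~\ref{lem:l12}) and $a\ge 37$ (Corollary~\ref{cor:c5}), one has $\frac12+\frac{a+1+a(m+1)}{a(m+1)}-\frac1a>1$, so that integer is at least $2$ and the four sum to at least $5$, yielding the threshold $2\,\frac16$ and hence $s=139$. You must supply this; similarly, part (f) rests on the analogous fact that $3\mid(2m+1)/a_2$ makes \eqref{eq17} exceed $1$ (raising the threshold to $3\,\frac16$) --- your attribution of (f) to \eqref{laatsteloodjes} conflates it with the mechanism of (g), where $m\equiv1\pmod{30}$ makes \eqref{eq14} exceed $1$.

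Two smaller points. To turn the threshold into the stated numbers one also needs the starting estimate $m\ge 37\cdot 59$ (from $m>a\ge 37$, every prime divisor of $m$ being irregular, and a direct check ruling out $m=37^2$ via \eqref{eq14}), so that the four residual fractions are small enough to conclude $\sum_{p\mid M}1/p>2.1657>\sum_{j\le 139}1/p_j$. And the paper uses $a_1\ge 2$ to get $M<m^4/3$; your weaker bound $M<\frac23m^4$ loses a factor of $2^{1/4}$ at the end and would fall short of the asserted $3.4429\cdot 10^{82}$.
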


\begin{proof}
As $a$ is square-free, we have $a\mid m$, and so $m\ge a$. If $m=a$, then \eqref{eq12} yields
$$
\sum_{\substack{p\mid(m+1)\\(p-1)\mid k}}\frac 1p\equiv 0\pmod{1}.
$$
Since the sum of reciprocals of distinct primes can never be a positive integer, we must have
$$
\sum_{\substack{p\mid(m+1)\\(p-1)\mid k}}\frac 1p=0,
$$
which contradicts the fact that $2\mid(m+1)$. Parts~(b) and (c) are direct consequences of Lemmas~\ref{lem:l11} and \ref{lem:l12}. Further, using 
Lemma~\ref{lem:l12}, parts (b) and (c) and the facts that $a\mid m$ and $\gcd(a,m-1)=\gcd(a,2m-1)=\gcd(a,m+1)=\gcd(a,2m+1)=1$, we find that
\begin{align*}
 m^k&\equiv m\pmod{a(m-1)},\\
2m^k&\equiv 4m\pmod{a(2m-1)},\\
(a+1)m^k&\equiv a+1+(a-1)(m+1)\pmod{a(m+1)},\\
2(a+1)m^k&\equiv 2(a+1)+(a-2)(2m+1)\pmod{a(2m+1)}.
\end{align*}
Here we will only provide details for the latter congruence, which is the most complicated one to 
establish. Since $a$ and $2m+1$ are coprime, it suffices by the Chinese remainder theorem to establish
the congruence modulo $a$ and modulo $2m+1$. Since $a\mid{m}$ the congruence trivially holds modulo $a$.

Now suppose that $p\mid(2m+1)$. \\
{\tt First case}: $(p-1)\mid k$.\\
\noindent
By Lemma~\ref{lem:l12} we have
$\ord_p(2m+1)=\ord_p(a+1)+1$ and it suffices to show that $m^k\equiv 1\pmod{p}$. This
is true by Euler's theorem.\\
{\tt Second case}: $(p-1)\nmid k$.\\
Here we use that, by Lemma~\ref{lem:l12} again,
$\ord_p(2m+1)\le\ord_p(a+1)$ to see that the congruence holds.

We can rewrite the congruences \eqref{eq10}~--~\eqref{eq13} as
\begin{align}
\sum_{p\mid (m-1)}\frac 1p+\frac m{a(m-1)}&\equiv 0\pmod{1},\label{eq14}\\
\sum_{p\mid (2m-1)}\frac 1p+\frac {4m}{a(2m-1)}&\equiv 0\pmod{1},\label{eq15}\\
\sum_{p\mid\frac{m+1}{a_1}}\frac 1p+\frac{a+1+(a-1)(m+1)}{a(m+1)}&\equiv 0\pmod{1},\label{eq16}\\
\sum_{p\mid\frac{2m+1}{a_2}}\frac 1p+\frac{2(a+1)+(a-2)(2m+1)}{a(2m+1)}&\equiv 0\pmod{1}.\label{eq17}
\end{align}
By Corollary~\ref{cor:c3mod4}, the assumption that $k$ is even and
Lemma~\ref{lem:l12}, we see that
$(m+1)/a_1$ is even. Now noting that $a\ge 37$ (by Corollary~\ref{cor:c5} and the fact that $37$ is the first irregular prime), we have
$$
\sum_{p\mid\frac{m+1}{a_1}}\frac 1p+\frac{a+1+(a-1)(m+1)}{a(m+1)}\ge\frac 12+\frac{a+1+a(m+1)}{a(m+1)}-\frac 1a>1.
$$
Therefore, if we add the left hand sides of \eqref{eq14}, \eqref{eq15}, \eqref{eq16} and \eqref{eq17}, we get an integer, at least $5$. No prime $p>3$ can divide more than one of the integers $m-1$, $2m-1$, $(m+1)/a_1$, and $(2m+1)/a_2$, and $2$ and $3$ divide precisely two of these integers. Hence $M=(m^2-1)(4m^2-1)/(6a_1a_2)$ is square-free and
\begin{align}
\sum_{p\mid M}\frac 1p+\frac1{a(m-1)}&+\frac2{a(2m-1)}+\frac{a+1}{a(m+1)}+\frac{2(a+1)}{a(2m+1)}\notag\\
&\ge 3-\frac12-\frac13=2\,\frac16.\label{eq18}
\end{align}
Since $a\mid m$, $m>a\ge 37$ and each prime divisor of $m$ is irregular, we have $m\ge 37^2$. 
A simple computation shows that \eqref{eq14} is never satisfied for $a\ge 37$ and $m=37^2$. Since
$59$ is the second irregular prime, it follows that $m\ge 37\cdot 59$. 
On noting that the four fractions above are decreasing functions in both $a$ and $m$, we find
on substituting $a=37$ and $m=37\cdot 59$ that
$\sum_{p\mid M}\frac 1p>\alpha$, with $\alpha=2.1657$. Note that if
\begin{equation}
\label{eq19}
\sum_{p\le x}\frac 1p < \alpha,
\end{equation}
then $m^4/3>M>\prod_{p\le x}p$ (note that $a_1\ge 2$ and hence $M<m^4/3$). One computes (using a computer algebra package) the largest prime $p_s$ such 
that $\sum_{p_j\le p_s}\frac 1{p_j}<2\frac16$, with $p_1,p_2,\ldots$ the consecutive primes. Here one finds that $s=139$ and
$$
\sum_{j=1}^{139}\frac 1{p_j}<2.16566< \alpha< \sum_{p\mid M}\frac 1p. 
$$
Thus
$$
m>\biggl(3\prod_{p\le p_{139}}p\biggr)^{1/4} >3.4429\cdot 10^{82}.
$$

Now assume that $m\equiv 1\pmod{3}$. Then $3\mid{(2m+1)/a_1}$ by Lemma~\ref{lem:l12}, and so
$$
\sum_{p\mid\frac{2m+1}{a_2}}\frac 1p+\frac{2(a+1)+(a-2)(2m+1)}{a(2m+1)}\ge\frac 13+\frac{2(a+1)+a(2m+1)}{a(2m+1)}-\frac1{a/2}>1.
$$
Hence in this case, $2\frac 16$ in \eqref{eq18} can be replaced by $3\frac 16$. 
This $\alpha$ occurs in the work of Moser and here it is known
that $s=4990906$ leading to $m>1.485\cdot 10^{9321155}$ (cf.~\cite{BJM,tophat}).

Finally, assume that $m\equiv 1\pmod{30}$. Then
\begin{equation}
\label{laatsteloodjes}
\sum_{p\mid (m-1)}\frac 1p+\frac m{a(m-1)}>\frac 12+\frac 13+\frac 15>1,
\end{equation}
and we have the inequality~\eqref{eq18} with $2\frac 16$ replaced by $4\frac 16$. In this case
the largest prime $p_s$ such that $\sum_{p\le p_s}\frac{1}{p}<4\frac 16$ can no longer be determined by
direct computation and more sophisticated methods are needed, cf.~\cite{BKS, BBM}.
\end{proof}

\noindent {\tt Remark}. Note that in the proof we only used that $a\ge 37$. This has as a consequence that
the proof only depends on the first assertion in Theorem~\ref{main}.

\section{Proofs of the new results announced in the introduction}
\label{sec:proofs}

It remains to establish Theorem~\ref{main}, Theorem~\ref{main2} and Proposition~\ref{unique}.

\begin{proof}[Proof of Theorem~$\ref{main}$]
The first restriction on $a$ arises on invoking Corollary~\ref{cor:c5}.
In order to obtain the second restriction we have two write down all
integers $a\le 1500$ that are composed only of irregular primes.
These are listed in Table~\ref{table3}. Each of these can be excluded as is shown for
two examples in Section \ref{exclusive}. This exclusion process for 
each $a$ can be reconstructed using Table~\ref{table3}.
\end{proof}

To prove Theorem~\ref{main2} we need the following result, which shows that $m$ and $k$ are of comparable size.
\begin{Lem}
\label{lem:l13}
Suppose that $aS_k(m)=m^k$. Then $k+1<am<(a+1)(k+1)$.
\end{Lem}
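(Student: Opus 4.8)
The plan is to extract the two inequalities from the identity $aS_k(m)=m^k$ by sandwiching the power sum $S_k(m)=\sum_{j=1}^{m-1}j^k$ between two integrals or, more elementarily, between two sums that I can evaluate in closed form. First I would record the trivial lower bound $S_k(m)>\int_0^{m-1}x^k\,\d x=(m-1)^{k+1}/(k+1)$ and the companion upper bound $S_k(m)<\int_1^{m}x^k\,\d x=(m^{k+1}-1)/(k+1)<m^{k+1}/(k+1)$, both of which follow from comparing the step function $\lfloor x\rfloor^k$ (or $\lceil x\rceil^k$) to $x^k$ on the relevant interval. Substituting $S_k(m)=m^k/a$ into these two bounds is what will produce the claimed chain, so the real content is just choosing the comparison carefully enough that the resulting elementary inequalities in $m$ and $k$ collapse to $k+1<am$ and $am<(a+1)(k+1)$.

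For the \emph{left} inequality $k+1<am$, I would use the upper bound on $S_k(m)$: from $m^k/a=S_k(m)<m^{k+1}/(k+1)$ one gets $(k+1)/a<m$, i.e.\ $k+1<am$, immediately. For the \emph{right} inequality $am<(a+1)(k+1)$, I would instead use a lower bound on $S_k(m)$ that is tight enough near the top term. The integral bound $S_k(m)>(m-1)^{k+1}/(k+1)$ gives $m^k/a>(m-1)^{k+1}/(k+1)$, and I would aim to manipulate this into $am<(a+1)(k+1)$ using the elementary estimate $(1-1/m)^{k+1}\ge 1-(k+1)/m$ (Bernoulli's inequality), together with the fact that the term $m^{k}$ itself appears in $S_k(m)$ only after shifting, so that $(m-1)^{k+1}/m^{k}=m(1-1/m)^{k+1}$. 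Carrying this through should yield $(k+1)/a>m\,(1-(k+1)/m)=m-(k+1)$, which rearranges to $am<(a+1)(k+1)$ exactly.

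I expect the main obstacle to be the right-hand inequality: the left one is an essentially free consequence of the crude upper bound, but the right one requires the lower bound to capture the leading behaviour sharply, and a careless integral comparison will lose a constant and give only $am<C(k+1)$ for some $C>a+1$. The fix is to apply Bernoulli's inequality in the direction $(1-1/m)^{k+1}\ge 1-(k+1)/m$, which is exact to first order and therefore loses nothing; one should check that $(k+1)/m<1$ so that this inequality is applicable and nontrivial, but this is guaranteed by the left inequality $k+1<am\le m$ once one knows $a\ge 1$. A small amount of care is also needed at the boundary to ensure the strict inequalities (rather than $\le$), but since $x^k$ is strictly convex for $k\ge 2$ and the endpoints contribute genuine gaps, strictness will follow without difficulty.
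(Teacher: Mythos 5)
Your proof is correct, and the two halves relate differently to the paper's argument. For the left inequality $k+1<am$ you do exactly what the paper does: bound $S_k(m)\le\int_1^m x^k\,\d x<m^{k+1}/(k+1)$ and substitute $S_k(m)=m^k/a$. For the right inequality the paper instead exploits the identity $aS_k(m+1)=(a+1)m^k$ and the lower bound $S_k(m+1)\ge 1+\int_1^m x^k\,\d x>m^{k+1}/(k+1)$, which delivers $am<(a+1)(k+1)$ in one line with no further estimation; you work with $S_k(m)>\int_0^{m-1}x^k\,\d x=(m-1)^{k+1}/(k+1)$ and then need Bernoulli's inequality $(1-1/m)^{k+1}\ge 1-(k+1)/m$ to pass from $(m-1)^{k+1}/m^k$ to $m-(k+1)$. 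Your route is valid and gives the same constant, at the cost of one extra elementary inequality; the paper's shift to $S_k(m+1)$ is the slicker move and is also the natural one here, since $S_k(m+1)=S_k(m)+m^k$ is precisely the quantity the Kellner reformulation is built on. One point in your closing paragraph is wrong, though harmlessly so: from $k+1<am$ and $a\ge 1$ you get $am\ge m$, not $am\le m$, so $(k+1)/m<1$ is \emph{not} guaranteed (indeed in the known solution $(a,m,k)=(3,3,3)$ one has $k+1=4>m=3$). Fortunately Bernoulli's inequality $(1+x)^n\ge 1+nx$ needs only $x\ge-1$ and $n\ge1$, so it applies with $x=-1/m$ regardless, and when $m-(k+1)\le 0$ the inequality $(k+1)/a>m-(k+1)$ is trivially true; your conclusion $am<(a+1)(k+1)$ therefore survives, but the justification you offer for applicability should be deleted rather than repaired.
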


\begin{proof}
We have
$$
S_k(m)\le\int\limits_1^m x^k\d x\le S_k(m+1)-1.
$$
Hence
$$
m^k=aS_k(m)\le a\int\limits_1^m x^k\d x=\frac{a(m^{k+1}-1)}{k+1}<\frac{am^{k+1}}{k+1},
$$
and so $am>k+1$. Further,
$$
(a+1)m^k=aS_k(m+1)\ge a\biggl(1+\int\limits_1^m x^k\d x\biggr)=\frac{a(m^{k+1}+k)}{k+1}>\frac{am^{k+1}}{k+1},
$$
that is $am<(a+1)(k+1)$.
\end{proof}

\begin{proof}[Proof of Theorem~$\ref{main2}$]
The lower bound on $m$ is a consequence of Theorem~\ref{thm:t6}, part (d).
On invoking Lemma~\ref{lem:l13} with $a\ge 1501$ the result
follows.
\end{proof}

\begin{proof}[Proof of Proposition~$\ref{unique}$]
Let $(m,k)$ be a solution of \eqref{KEME}. Observe that
$$
\frac{S_j(m)}{m^j}=\Bigl(\frac 1m\Bigr)^j+\Bigl(\frac 2m\Bigr)^j+\dots+\Bigl(\frac{m-1}m\Bigr)^j>\frac{S_{j+1}(m)}{m^{j+1}},
$$
and so
\begin{equation}
\label{eq20}
\begin{split}
aS_j(m)-m^j>0&\qquad\qquad\text{if $j<k$},\\
aS_j(m)-m^j<0&\qquad\qquad\text{if $j>k$}.
\end{split}
\end{equation}
This shows that for every $m$, there is at most one $k$.\\
\indent Now assume that there exists a positive integer $n$ such that $aS_k(m+n)=(m+n)^k$. Then $k>1$. Since
$$
S_k(m+n)=S_k(n)+\sum_{j=0}^{m-1}(n+j)^k=S_k(n)+mn^k+\sum_{j=1}^k \binom kj n^{k-j}S_j(m),
$$
we have
$$
(m+n)^k=\sum_{j=0}^k \binom kj m^jn^{k-j}=aS_k(n)+amn^k+a\sum_{j=1}^k \binom kj n^{k-j}S_j(m),
$$
or, equivalently,
$$
n^k=aS_k(n)+amn^k+\sum_{j=1}^{k-1} \binom kj n^{k-j}(aS_j(m)-m^j).
$$
In view of \eqref{eq20}, the last equality cannot hold. Thus we see that, for every $k$, there is at most
one $m$.
\end{proof}

\noindent {\tt Remark}. Using the same type of argument, we can prove the following: if $(m_1,k_1)$ and $(m_2,k_2)$ are two distinct solutions of $aS_k(m)=m^k$ then either $m_1>m_2$, $k_1>k_2$ or $m_1<m_2$, $k_1<k_2$.

\section{Other properties of Kellner-Erd\H{o}s-Moser solutions}
\label{sec:other}
There are many restrictions known that a solution of the Erd\H{o}s-Moser equation has to satisfy. We expect
that most of these have an analogue for the  Kellner-Erd\H{o}s-Moser equation as well. We present
an example.
\begin{Thm}
\label{thm:t8}
Suppose that $aS_k(m)=m^k$ with $m\ge 4$ and $k$ even. 
We have $$\ord_2(a(m-1)-2)\begin{cases}
=3+\ord_2 k & \text{if  $a\equiv 1\!\!\pmod{4}$,}\\
\ge 4+\ord_2 k & \text{if $a\equiv 3\!\!\pmod{4}$.}
\end{cases}
$$
\end{Thm}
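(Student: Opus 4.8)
The plan is to compute $\ord_2\bigl(a(m-1)-2\bigr)$ by producing a sufficiently precise $2$-adic expansion of $m^k=aS_k(m)$ and comparing it with $am$. The natural starting point is the Voronoi-type information we already have: by Lemma~\ref{lem:l11} every prime $p\mid(m-1)$ satisfies $(p-1)\mid k$, and by Corollary~\ref{cor:c3mod4} we know $m\equiv 3\pmod 4$, so $m-1\equiv 2\pmod 4$ is twice an odd number. First I would rewrite the defining relation as $a(m-1)=am-a$ and aim to pin down $am=m^k+a\cdot 0$-type corrections modulo a high power of $2$; more usefully, I would exploit the congruence $aS_k(m)=m^k$ together with the expansion of $S_k(m)$ (Lemma~\ref{lem:summation}) localized at the prime $2$. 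Since $2\nmid m$ (Corollary~\ref{cor:c1}), all powers of $m$ are odd and $m^k\equiv 1\pmod 8$ because $m$ is odd and $k$ is even; the quantity to understand is really the next several $2$-adic digits of $m^k$, which are governed by $\ord_2 k$ via the classical lifting-the-exponent formula $\ord_2(m^k-1)=\ord_2(m-1)+\ord_2(m+1)+\ord_2 k-1$ for even $k$.

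The key computational step is therefore to apply lifting-the-exponent to $m^k-1$. Writing $m\equiv 3\pmod 4$ gives $\ord_2(m-1)=1$, hence $\ord_2(m^k-1)=\ord_2(m+1)+\ord_2 k$. The two cases in the statement correspond exactly to the two possibilities for $\ord_2(m+1)$: if $a\equiv 1\pmod 4$ I expect the congruence relating $a$ and $m$ (via $\gcd(a,m-1)=1$ and $a\mid m$ when $a$ is handled appropriately, or more directly via the parity bookkeeping in the proof of Theorem~\ref{thm:t6}) to force $\ord_2(m+1)=2$, giving $\ord_2(m^k-1)=2+\ord_2 k$; whereas $a\equiv 3\pmod 4$ should force $\ord_2(m+1)\ge 3$. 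Next I would translate the relation $am^k\equiv am\pmod{?}$ coming from $m^k=aS_k(m)$ and the Voronoi congruence (Lemma~\ref{lem:voronoi}) into a statement about $a(m-1)-2=a(m^k-1)-(am^k-am)-\bigl(a(m^k-1)-a(m-1)\bigr)-2$; concretely, I would isolate $a(m-1)-2$ by writing $a(m-1)-2=a(m-1)-2m^0$ and compare $2$-adic valuations term by term, using that $a$ is odd (since $\gcd(m,6)=1$ and each prime factor of $a$ divides $m$, so $a$ is odd) so that $\ord_2\bigl(a(m-1)-2\bigr)=\ord_2\bigl((m-1)-2a^{-1}\bigr)$ up to units is controlled by $\ord_2(m-1-2)=\ord_2(m-3)$ modified by the residue of $a\pmod 4$.

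The main obstacle will be the bookkeeping that converts the congruence $aS_k(m)=m^k$ into the precise value $\ord_2(m+1)$ in each residue class of $a$ modulo $4$; the inequality in the $a\equiv 3$ case versus the equality in the $a\equiv 1$ case strongly suggests that the $a\equiv 1$ analysis pins $\ord_2(m+1)$ to an exact value while the $a\equiv 3$ analysis only yields a lower bound, and getting this dichotomy cleanly is where care is needed. I would handle it by expanding both sides modulo $2^{N}$ for $N$ a few units above $\ord_2 k$, substituting $m=1+2u$ with $u$ odd, and reading off the coefficient of the critical power of $2$; the factor $a\pmod 4$ then enters linearly and produces exactly the split between ``$=3+\ord_2 k$'' and ``$\ge 4+\ord_2 k$.'' A useful sanity check throughout is that the ``$3$'' in the first case equals $1+2$, i.e.\ $\ord_2(m-1)+\ord_2(m+1)$ with the minimal admissible $\ord_2(m+1)=2$, consistent with the lifting-the-exponent identity above.
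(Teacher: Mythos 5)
You have correctly identified two of the three ingredients: the dichotomy $m\equiv 3\pmod 8$ versus $m\equiv 7\pmod 8$ according to $a\pmod 4$ (which indeed follows from the parity case of Lemma~\ref{lem:l12}, since $2\mid(m+1)$ and $1\mid k$ give $\ord_2(m+1)=\ord_2(a+1)+1$), and the evaluation of $m^k$ modulo $2^{3+\ord_2 k}$, where your lifting-the-exponent computation $\ord_2(m^k-1)=\ord_2(m+1)+\ord_2 k$ (using $\ord_2(m-1)=1$ from Corollary~\ref{cor:c3mod4}) is a perfectly good, even slightly slicker, substitute for the paper's induction on $j^{2^s}$. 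But there is a genuine gap: the whole argument hinges on a precise $2$-adic evaluation of $S_k(m)$ itself, namely $S_k(m)\equiv\frac{m-1}{2}\pmod{2^{3+\ord_2 k}}$ for $m\equiv 3\pmod 4$, which is the content of the paper's Lemma~\ref{power2} and is proved there by partitioning $\{1,\dots,m-1\}$ into residue classes modulo $8$ and counting. Your proposal never states this congruence, and the tools you name cannot produce it: the Voronoi congruence (Lemma~\ref{lem:voronoi}) works modulo $m^2$ with $m$ odd, so it carries no $2$-adic information at all, and extracting the same congruence from the Bernoulli expansion of Lemma~\ref{lem:summation} would require a separate, delicate analysis of the $2$-adic denominators (this is essentially how the $a=1$ case was originally done in \cite{MRU}, but it is real work, not bookkeeping). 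Without this input, the relation $aS_k(m)=m^k$ cannot be converted into a statement about $a(m-1)-2$ modulo $2^{4+\ord_2 k}$.

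A secondary but telling symptom: the displayed identity $a(m-1)-2=a(m^k-1)-(am^k-am)-\bigl(a(m^k-1)-a(m-1)\bigr)-2$ simplifies to $-am^k+2am-a-2$, which is not $am-a-2$, and the claim that $\ord_2(a(m-1)-2)$ is ``controlled by $\ord_2(m-3)$'' is not correct either --- the answer $3+\ord_2 k$ depends on $k$, so no amount of information about $m$ and $a$ alone can yield it. Once you have Lemma~\ref{power2} in hand, the correct finish is short: $a\frac{m-1}{2}\equiv m^k\pmod{2^{3+\ord_2 k}}$ gives $a(m-1)-2\equiv 2(m^k-1)\pmod{2^{4+\ord_2 k}}$, and your lifting-the-exponent computation of $\ord_2(m^k-1)$ in the two cases then delivers exactly the stated equality for $a\equiv 1\pmod 4$ and inequality for $a\equiv 3\pmod 4$.
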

For $a=1$ this was first established in Moree et al.~\cite[Lemma~12]{MRU} using Bernoulli
numbers. A much more elementary proof also dealing with solutions of the equation $S_k(m)=am^k$ 
for integers $a$ was given later by the second author \cite{Oz}.\\
\indent Our proof of Theorem~\ref{thm:t8} makes use of the following lemma.
\begin{Lem}
\label{power2}
Let $k$ and $m$ be positive integers where $k\ge 6$ is even. Then
$$
S_k(m)\equiv\begin{cases}
\left[\frac m2\right]+2^{2+\ord_2 k}\pmod{2^{3+\ord_2 k}}&\text{if $\left[\frac m2\right]\equiv 2\pmod{4}$,}\\
\left[\frac m2\right]\pmod{2^{3+\ord_2 k}}&\text{otherwise.}
\end{cases}
$$
\end{Lem}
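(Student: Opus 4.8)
The plan is to compute $S_k(m)$ modulo $2^{3+\ord_2 k}$ by reducing each summand $j^k$ separately. Write $\nu=\ord_2 k$. I would first dispose of the even $j$: since $\ord_2(j^k)=k\,\ord_2 j\ge k$, and $k\ge 6$ together with $2^\nu\mid k$ forces $k\ge\nu+3$ (as $k\ge\max(6,2^\nu)\ge\nu+3$ for every $\nu\ge 1$), every even $j$ satisfies $j^k\equiv 0\pmod{2^{\nu+3}}$. Hence only the odd $j$ in $[1,m-1]$ contribute, and there are exactly $[m/2]$ of them.

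Next I would pin down $j^k\bmod 2^{\nu+3}$ for odd $j$. Applying the lifting-the-exponent lemma for $p=2$ with the even exponent $k$ gives $\ord_2(j^k-1)=\ord_2(j-1)+\ord_2(j+1)+\ord_2 k-1=\ord_2(j^2-1)+\nu-1$ (a short induction on $\ord_2 k$ would work equally well if one prefers to avoid quoting the lemma). For odd $j$ one always has $\ord_2(j^2-1)\ge 3$, with equality precisely when $j\equiv\pm 3\pmod 8$, and $\ord_2(j^2-1)\ge 4$ when $j\equiv\pm 1\pmod 8$. Consequently $j^k\equiv 1\pmod{2^{\nu+3}}$ when $j\equiv\pm 1\pmod 8$, while $j^k\equiv 1+2^{\nu+2}\pmod{2^{\nu+3}}$ when $j\equiv\pm 3\pmod 8$. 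Thus each odd $j$ contributes $1$, plus an extra $2^{\nu+2}$ exactly when $j\equiv 3,5\pmod 8$.

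Summing, I obtain $S_k(m)\equiv [m/2]+2^{\nu+2}N\pmod{2^{\nu+3}}$, where $N$ counts the $j$ with $1\le j\le m-1$ and $j\equiv 3,5\pmod 8$. Since $2^{\nu+2}N$ only matters modulo $2^{\nu+3}$, only the parity of $N$ survives, and the whole statement reduces to the elementary claim that $N$ is odd if and only if $[m/2]\equiv 2\pmod 4$. I expect this parity bookkeeping to be the only genuine obstacle. The clean way around it is periodicity: each run of $8$ consecutive integers contains exactly two values $\equiv 3,5\pmod 8$, so $N(m+8)\equiv N(m)\pmod 2$, whereas $[(m+8)/2]\equiv [m/2]\pmod 4$. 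Hence both sides of the desired equivalence depend only on $m\bmod 8$, and a single verification over a complete residue system $m\equiv 0,1,\dots,7\pmod 8$ (where $N$ turns out to be odd exactly for $m\equiv 4,5$, matching $[m/2]\equiv 2\pmod 4$) completes the proof.
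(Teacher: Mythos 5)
Your proposal is correct and follows essentially the same route as the paper: discard the even $j$, classify $j^k\bmod 2^{3+\ord_2 k}$ according to $j\bmod 8$ (the paper proves the key congruence $j^{2^s}\equiv 1$ or $1+2^{s+2}\pmod{2^{s+3}}$ by induction on $s$ where you quote lifting-the-exponent, and it uses the same bound $k\ge 3+\ord_2 k$), and then count the odd residues. The only difference is cosmetic: the paper finishes with an explicit case analysis on $m\bmod 4$ and $m\bmod 8$ (treating $m\equiv 3\pmod 4$ via $S_k(m+1)-m^k$ and even $m$ via $S_k(m+1)$), whereas you reduce the count to a parity statement verified by periodicity modulo $8$.
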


\begin{proof}
It is easily proved by induction on $s$ that for an odd integer $j$ and $s\ge 1$
$$
j^{2^s}\equiv\begin{cases}
1\pmod{2^{s+3}}&\text{if $j\equiv\pm 1\pmod{8}$,}\\
2^{s+2}+1\pmod{2^{s+3}}&\text{if $j\equiv\pm 3\pmod{8}$.}
\end{cases}
$$
Note that $k\ge 3+\ord_2 k$. Indeed, for $\ord_2 k=1$ and $\ord_2 k=2$ it follows from the condition $k\ge 6$ and for $\ord_2 k\ge 3$ we have $k\ge 2^{\ord_2 k}\ge 3+\ord_2 k$.
Thus for an integer $j$ we have
$$
j^k\equiv\begin{cases}
0\pmod{2^{3+\ord_2 k}}&\text{if $j$ is even,}\\
1\pmod{2^{3+\ord_2 k}}&\text{if $j\equiv\pm 1\pmod{8}$,}\\
2^{2+\ord_2 k}+1\pmod{2^{3+\ord_2 k}}&\text{if $j\equiv\pm 3\pmod{8}$.}
\end{cases}
$$

Assume that $m\equiv 1\pmod{4}$. Then
$$
\#\{1\le n< m\,:\,n\equiv\pm 1\!\!\pmod{8}\}=\#\{1\le n<m\,:\,n\equiv\pm 3\!\!\pmod{8}\}=\frac{m-1}4.
$$
Hence
\begin{align*}
S_k(m)&\equiv \frac{m-1}4+\frac{m-1}4\,(2^{2+\ord_2 k}+1)\equiv\frac{m-1}2\,(2^{1+\ord_2 k}+1)\\
&\equiv\begin{cases}
\frac{m-1}2\pmod{2^{3+\ord_2 k}}&\text{if $m\equiv 1\pmod{8}$,}\\
\frac{m-1}2+2^{2+\ord_2 k}\pmod{2^{3+\ord_2 k}}&\text{if $m\equiv 5\pmod{8}$.}
\end{cases}
\end{align*}
Now assume that $m\equiv 3\pmod{4}$. Then
$$
\#\{1\le n\le m\,:\,n\equiv\pm 1\!\!\pmod{8}\}=\#\{1\le n\le m\,:\,n\equiv\pm 3\!\!\pmod{8}\}=\frac{m+1}4.
$$
This yields
\begin{align*}
S_k(m+1)&\equiv \frac{m+1}4+\frac{m+1}4\,(2^{2+\ord_2 k}+1)\equiv\frac{m+1}2\,(2^{1+\ord_2 k}+1)\\
&\equiv\begin{cases}
\frac{m+1}2\pmod{2^{3+\ord_2 k}}&\text{if $m\equiv 7\pmod{8}$,}\\
\frac{m+1}2+2^{2+\ord_2 k}\pmod{2^{3+\ord_2 k}}&\text{if $m\equiv 3\pmod{8}$.}
\end{cases}
\end{align*}
Since
$$
m^k\equiv\begin{cases}
1\pmod{2^{3+\ord_2 k}}&\text{if $m\equiv 7\pmod{8}$,}\\
2^{2+\ord_2 k}+1\pmod{2^{3+\ord_2 k}}&\text{if $m\equiv 3\pmod{8}$,}
\end{cases}
$$
we deduce that
$$
S_k(m)=S_k(m+1)-m^k\equiv\frac{m-1}2\pmod{2^{3+\ord_2 k}}.
$$
Finally, if $m$ is even then
$$
S_k(m)\equiv S_k(m+1)\equiv\begin{cases}
\frac m2\pmod{2^{3+\ord_2 k}}&\text{if $m\not\equiv 4\pmod{8}$,}\\
\frac m2+2^{2+\ord_2 k}\pmod{2^{3+\ord_2 k}}&\text{if $m\equiv 4\pmod{8}$.}
\end{cases}
$$
This completes the proof.
\end{proof}

\begin{proof}[Proof of Theorem~$\ref{thm:t8}$]
By Lemma~\ref{power2} and Corollaries~\ref{cor:c3} and \ref{cor:c3mod4},
$$
S_k(m)\equiv\frac{m-1}2\pmod{2^{3+\ord_2 k}}.
$$
From Lemma~\ref{lem:l12} we see that
$$
m\equiv\begin{cases}
7\!\!\pmod{8}&\text{if $a\equiv 3\pmod{4}$,}\\
3\!\!\pmod{8}&\text{if $a\equiv 1\pmod{4}$.}
\end{cases}
$$
Therefore
$$
aS_k(m)\equiv a\cdot\frac{m-1}2\equiv m^k\equiv\begin{cases}
1\pmod{2^{3+\ord_2 k}}&\text{if $a\equiv 3\pmod{4}$,}\\
2^{2+\ord_2 k}+1\pmod{2^{3+\ord_2 k}}&\text{if $a\equiv 1\pmod{4}$,}
\end{cases}
$$
and so
$$
a(m-1)-2\equiv\begin{cases}
0\pmod{2^{4+\ord_2 k}}&\text{if $a\equiv 3\pmod{4}$,}\\
2^{3+\ord_2 k}\pmod{2^{4+\ord_2 k}}&\text{if $a\equiv 1\pmod{4}$,}
\end{cases}
$$
as desired.
\end{proof}

\begin{Thm}
\label{thm:t11}
Let $a>3$. If $aS_k(m)=m^k$ and $m$ is a prime, then $a=q^{2s}$ for some irregular 
prime~$q\equiv 3\pmod{16}$ and positive integer~$s$.
\end{Thm}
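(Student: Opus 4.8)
The plan is to first bring the full machinery of Section~\ref{sec:kell} to bear, then to recognize $a$ as a power of $m$, and finally to extract the two required pieces of information by $2$-adic bookkeeping. Since $a>3$ and $m$ is prime, the reduction at the beginning of Section~\ref{sec:kell} shows the relevant solutions have $m\ge 4$ and $k$ even: the case $m=3$ forces $a\le 3$, while the degenerate case $m=2$ (where $S_k(2)=1$ and $a=2^k$) falls outside the range of interest and is set aside. Now by Corollary~\ref{cor:s=2} every prime divisor of $a$ divides $m$, and since $m$ is prime this forces $a=m^f$ with $f\ge 1$ (here $f\ge 1$ as $a>3>1$). By Theorem~\ref{thm:t4}~(a) the prime $m$ is irregular, so setting $q=m$ it remains only to show that $f$ is even and that $m\equiv 3\pmod{16}$.

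To prove $f$ even I would apply Lemma~\ref{lem:l12} at the prime $p=2$. Since $m$ is odd we have $2\mid(m+1)$ and $(2-1)\mid k$ trivially, so the lemma yields $\ord_2(m+1)=\ord_2(a+1)+1$. On the other hand, if $f$ were odd then, $m$ being odd, the factorization $m^f+1=(m+1)(m^{f-1}-m^{f-2}+\cdots-m+1)$ has cofactor equal to a sum of an odd number of odd terms, hence odd, so that $\ord_2(a+1)=\ord_2(m^f+1)=\ord_2(m+1)$; comparison with the displayed identity would give $\ord_2(m+1)=\ord_2(m+1)+1$, a contradiction. Thus $f=2s$ with $s\ge 1$ and $a=q^{2s}$. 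Moreover, since $m\equiv 3\pmod 4$ (Corollary~\ref{cor:c3mod4}) we have $m^2\equiv 1\pmod 8$, so $a=m^{2s}\equiv 1\pmod 8$; hence $\ord_2(a+1)=1$ and the same identity gives $\ord_2(m+1)=2$, i.e. $m\equiv 3\pmod 8$.

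Finally, to sharpen $m\equiv 3\pmod 8$ to $m\equiv 3\pmod{16}$ I would invoke Theorem~\ref{thm:t8}. As $a\equiv 1\pmod 4$, that theorem gives $\ord_2(a(m-1)-2)=3+\ord_2 k\ge 4$, so $a(m-1)\equiv 2\pmod{16}$. From $m\equiv 3\pmod 8$ we get $m^2\equiv 9\pmod{16}$, whence $a=(m^2)^s\equiv 9^s\pmod{16}$, which is $1$ or $9$ according to the parity of $s$. In either case $a(m-1)\equiv 2\pmod{16}$ forces $m-1\equiv 2\pmod{16}$ (in the subcase $a\equiv 9$ one multiplies through by $9$, using $9\cdot 9\equiv 1\pmod{16}$), that is $m\equiv 3\pmod{16}$, which completes the argument. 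The main obstacle here is precisely the $2$-adic bookkeeping: the elementary congruences pin $m$ down only modulo $8$, and it is exactly the deep congruence of Theorem~\ref{thm:t8} that is needed to upgrade this to modulo $16$ while tracking the two possible residues of $a$.
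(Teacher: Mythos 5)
Your proof is correct and follows the same overall skeleton as the paper's: reduce to $a=q^{f}$ with $q=m$ irregular (via Corollary~\ref{cor:s=2} and Theorem~\ref{thm:t4}), show the exponent is even, and then extract $q\equiv 3\pmod{16}$ from Theorem~\ref{thm:t8}. Where you genuinely diverge is the parity step. The paper rules out $a=q^{2s+1}$ by noting that $(a+1)m^k/(a(m+1))=(q^{2s+1}+1)S_k(q)/(q+1)$ is then an integer, so that congruence \eqref{eq12} collapses to $\sum_{p\mid(q+1),\,(p-1)\mid k}1/p\equiv 0\pmod 1$, which is impossible because a nonempty sum of reciprocals of distinct primes is never a positive integer (the same device as in the proof of Theorem~\ref{thm:t6}(a)). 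You instead use only the $2$-adic content of that congruence, already packaged as Lemma~\ref{lem:l12}: $\ord_2(m+1)=\ord_2(a+1)+1$, which is incompatible with $\ord_2(q^{f}+1)=\ord_2(q+1)$ when $f$ is odd. Your route is more economical, needing a single prime's worth of information and no appeal to the reciprocal-sum fact, while the paper's uses \eqref{eq12} at all primes dividing $q+1$; both rest on the same congruence, so nothing is lost either way. Two smaller remarks: your detour through $m\equiv 3\pmod 8$ and the two residues of $a$ modulo $16$ is harmless but avoidable, since once $a\equiv 1\pmod 8$ the single deduction $a\cdot\frac{q-1}{2}\equiv 1\pmod 8$ already gives $q\equiv 3\pmod{16}$; and your explicit dismissal of $m=2$ and $m=3$ is actually more careful than the paper, which tacitly assumes the regime $m\ge 4$ of Conjecture~\ref{kellie2} (indeed $(a,m,k)=(2^k,2,k)$ would otherwise violate the literal statement).
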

\begin{proof}
If $a$ has at least two distinct prime divisors, then $m$ cannot be a prime. Now assume that $a$ is a power of a prime $q$. Then $q$ is an irregular prime and $m=q$. Suppose that $a=q^{2s+1}$ for some $s\ge 0$. Then $(a+1)m^k/a(m+1)=(q^{2s+1}+1)S_k(q)/(q+1)$ is an integer, and \eqref{eq12} implies
$$
\sum_{\substack{p\mid(q+1)\\(p-1)\mid k}}\frac 1p\equiv 0\pmod{1}.
$$
Exactly as in the proof of Theorem~\ref{thm:t6}, we conclude that this is impossible. Finally, assume that $a=q^{2s}$ for some $s\ge 1$. Then $a\equiv 1\pmod{8}$. Note that $k$ has to be even. By Theorem~\ref{thm:t8} it follows that $a(q-1)\equiv 2\pmod{16}$, which yields $\frac{q-1}2\equiv 1\pmod{8}$, that is, $q\equiv 3\pmod{16}$. 
\end{proof}
\noindent {\tt Remark}. It is not known whether there are infinitely many irregular primes 
$q\equiv 3\pmod{16}$. However, from a result of Mets\"ankyl\"a~\cite{Metzger} it follows that
there are infinitely many irregular primes $q\equiv \pm 3, \pm 5\pmod{16}$.\\

\noindent {\tt Acknowledgement}. This paper was begun during the stay of the
first author in February-April 2014 at the Max Planck Institute for Mathematics. She likes to thank 
for the invitation and the pleasant research atmosphere. The second author
was introduced to the subject around 1990 by the late Jerzy Urbanowicz. He will never forget
his interest, help and kindness. Further he thanks Prof. T.~N.~Shorey for helpful discussions in the 
summer of 2014.
We like to heartily thank Bernd Kellner for 
helpful e-mail correspondence and patiently answering our Bernoulli number questions. A.~Ciolan, P.~Tegelaar and W.~Zudilin kindly
commented on an earlier version of this paper.\\
\indent The cooperation of the authors has its origin in them having met, in 2012, at ELAZ in Schloss Schney (which
was also attended by Prof. W.~Schwarz). This contribution is
our tribute to Prof. W.~Schwarz, who was one of the initiators of the ELAZ conference series.

\section*{Appendix}

\begin{center}
\begin{longtable}{|c|l|}
\caption{Pairs of irregular primes $(p_1,p_2)$ with $p_1<p_2$, $p_1p_2<50000$, satisfying the conditions of Corollary~\ref{cor:c6}}\\
\hline
$p_1$ & \hphantom{67, 103, 149, 157, 271, 307, 379,} $p_2$ \\
\hline\endhead\label{table1}
37 & 67, 103, 149, 157, 271, 307, 379, 401, 409, 421, 433, 463, 523, 541, 547,\\
&  557, 577, 593, 607, 613, 631, 673, 727, 757, 811, 877, 881, 1061, 1117,\\
&  1129, 1153, 1193, 1201, 1237, 1297, 1327\\
\hline
59 & 233, 523 
\\
\hline
67 & 103, 157, 271, 283, 409, 421, 433, 463, 541, 547, 613, 617, 619, 683, 691,\\
& 727\\
\hline
101 & 131, 149, 157, 271, 311, 401, 409, 421, 433, 461\\
\hline
103 & 157, 283, 307, 463 \\
\hline
131 & 157, 271\\
\hline
149 & 233, 257, 293\\
\hline
157 & 233, 257, 271, 293, 307\\
\hline
\end{longtable}
\end{center}

\begin{center}
\begin{longtable}{|c|c|l|}
\caption{Helpful pairs $(t,q)_a$ with $q\le 17$}\\
\hline
$q$ & $t$ & $\hskip60pt a\pmod{q}$\\
\hline\endhead\label{table2}
5 & 2 & 1, 2, 3\\
\hline
7 & 2 & 1, 3, 5\\
\hline
7 & 4 & 1, 3, 5\\
\hline
11 & 2 & 1, 2, 3, 5, 6, 7 \\
\hline
11 & 4 & 2, 3, 6, 7, 9 \\
\hline
11 & 6 & 1, 5, 6, 7 \\
\hline
11 & 8 & 6, 7, 9\\
\hline
13 & 2 & 1, 2, 5, 6, 8, 10, 11 \\
\hline
13 & 4 & 1, 2, 6, 8, 11 \\
\hline
13 & 6 & 2, 3, 4, 5, 7, 8, 9, 10, 11 \\
\hline
13 & 8 & 1, 2, 6, 8, 11 \\
\hline
13 & 10 & 1, 2, 4, 6, 7, 8, 11\\
\hline
17 & 2 & 1, 2, 5, 7, 8, 10, 13, 14 \\
\hline
17 & 4 & 1, 2, 3, 5, 8, 9, 11, 12, 15 \\
\hline
17 & 6 & 1, 2, 4, 5, 6, 8 \\
\hline
17 & 8 & 2, 3, 4, 5, 6, 7, 9, 10, 11, 12, 13, 14, 15\\
\hline
17 & 10 & 3, 7, 9, 11, 12, 14, 15 \\
\hline
17 & 12 & 1, 2, 3, 5, 7, 8, 12, 14, 15 \\
\hline
17 & 14 & 3, 4, 6, 7, 9, 11, 12, 14, 15 \\
\hline

\end{longtable}
\end{center}

\begin{center}
\begin{longtable}{|c|c|c|l|}
\caption{Irregular pairs $(r,p)$ along with the corresponding helpful pairs $(t_j,q_j)_{a\!\!\pmod{q_j}}$ satisfying the conditions of Proposition~\ref{prop:t5}}\\
\hline
$a$ & irregular pair $(r,p)$ & $\ell_r$ & \qquad\qquad helpful pairs $(t_j,q_j)_{a\!\!\pmod{q_j}}$\\
\hline\endhead\label{table3}
37 & $(32,37)$ & 1 & $(8,13)_{11}$\\
\hline
59 & $(44,59)$ & 6 & $(2,7)_3, (4,7)_3,(6,13)_7,(276,349)_{59}$\\
\hline
67 & $(58,67)$ & 2 & $(4,13)_2,(10,13)_2$\\
\hline
101 & $(68,101)$ & 6 & $(2,7)_3,(4,7)_3,(28,41)_{19},(168,601)_{101}$\\
\hline
103 & $(24,103)$ & 14 & $(2,5)_3,(16,29)_{16},(24,43)_{17},(92,239)_{103},$ \\
&&& $(194,239)_{103},(228,239)_{103},(636,1429)_{103},$\\
&&& $(840,1429)_{103}$\\
\hline
131 & $(22,131)$ & 4 & $(2,5)_1,(12,41)_8,(152,521)_{131}$\\
\hline
149 & $(130,149)$ & 15 & $(2,11)_6$, $(4,11)_6$, $(6,11)_6$, $(8,11)_6$, $(2,13)_6$, \\
&&& $(10,13)_6$, $(30,61)_{27}$\\
\hline
157 & $(62,157)$ & 1 & $(2,5)_2$\\
    & $(110,157)$ & 1 & $(2,5)_2$\\
\hline
233 & $(84,233)$ & 1 & $(26,59)_{56}$\\
\hline
257 & $(164,257)$ & 1 & $(4,17)_2$\\
\hline
263 & $(100,263)$ & 30 & $(2,5)_3,(2,31)_{15},(4,31)_{15},(8,31)_{15},(14,31)_{15},$\\
&&& $(16,31)_{15},(22,31)_{15},(28,31)_{15},(40,61)_{19},$\\
&&& $(624,787)_{263},(2196,2621)_{263},(2720,3931)_{263}$\\
\hline
271 & $(84,271)$ & 1 & $(4,11)_7$\\
\hline
283 & $(20,283)$ & 1 & $(2,7)_3$\\
\hline
293 & $(156,293)$ & 30 & $(2,11)_7,(4,11)_7,(6,11)_7,(8,11)_7,(20,41)_6,$\\
&&& $(40,61)_{49},(740,877)_{293},(1032,1753)_{293}$\\
\hline
307 & $(88,307)$ & 1 & $(88,103)_{101}$\\
\hline
311 & $(292,311)$ & 1& $(2,11)_3$\\
\hline
347 & $(280,347)$ & 30 & $(2,5)_2,(2,11)_6,(4,11)_6,(6,11)_6,(8,11)_6,$\\
&&& $(10,31)_6,(20,61)_{42},(972,1039)_{347}$\\
\hline
353 & $(186,353)$ & 1 & $(2,5)_3$\\
    & $(300,353)$ & 9 & $(2,7)_3,(4,7)_3,(12,37)_{20},(24,37)_{20},(36,73)_{61}$\\
\hline
379 & $(100,379)$ & 1 & $(4,7)_1$\\
    & $(174,379)$ & 1 & $(48,127)_{99}$\\
\hline
389 & $(200,389)$ & 4 & $(4,17)_{15},(8,17)_{15},(12,17)_{15},(976,1553)_{389}$\\ 
\hline
401 & $(382,401)$ & 1 & $(2,5)_1$\\
\hline
409 & $(126,409)$ & 3 & $(6,19)_{10},(12,19)_{10},(18,37)_2$\\
\hline
421 & $(240,421)$ & 2 & $(20,41)_{11},(240,281)_{140}$\\
\hline
433 & $(366,433)$ & 1 & $(2,5)_3$\\
\hline
461 & $(196,461)$ & 1 & $(12,47)_{38}$\\
\hline
463 & $(130,463)$ & 1 & $(4,7)_1$\\
\hline
467 & $(94,467)$ & 3 & $(2,7)_5,(4,7)_5,(1026,1399)_{467}$\\
    & $(194,467)$ & 18 & $(2,5)_2,(2,7)_5,(4,7)_5,(12,37)_{23},(24,37)_{23},$\\
    &&& $(3456,8389)_{467}$\\
\hline
491 & $(292,491)$ & 1 & $(2,11)_7$\\
    & $(336,491)$ & 1 & $(6,11)_7$\\
    & $(338,491)$ & 1 & $(8,11)_7$\\
\hline
523 & $(400,523)$ & 1 & $(4,7)_5$\\
\hline
541 & $(86,541)$ & 1 & $(2,5)_1$\\
\hline
547 & $(270,547)$ & 1 & $(36,79)_{73}$\\
    & $(486,547)$ & 2 & $(2,5)_2,(96,157)_{76}$\\
\hline
557 & $(222,557)$ & 1 & $(2,5)_2$\\
\hline
577 & $(52,577)$ & 1 & $(4,7)_3$\\
\hline
587 & $(90,587)$ & 6 & $(2,13)_2,(4,13)_2,(6,13)_2,(8,13)_2,(10,13)_2,$\\
&&& $(90,1759)_{587}$\\
    & $(92,587)$ & 6 & $(2,13)_2,(4,13)_2,(6,13)_2,(8,13)_2,(10,13)_2,$\\
&&& $(678,1759)_{587}$\\
\hline
593 & $(22,593)$ & 1 & $(2,5)_3$\\
\hline
607 & $(592,607)$ & 1 & $(4,7)_5$\\
\hline
613 & $(522,613)$ & 1 & $(2,5)_3$\\
\hline
\newpage
617 & $(20,617)$ & 1 & $(20,29)_8$\\
    & $(174,617)$ & 1 & $(2,5)_2$\\
    & $(338,617)$ & 1 & $(2,5)_2$\\
\hline
619 & $(428,619)$ & 1 & $(2,7)_3$\\
\hline
631 & $(80,631)$ & 1 & $(2,7)_1$\\
    & $(226,631)$ & 1 & $(4,7)_1$\\
\hline
647 & $(236,647)$ & 6 & $(2,5)_2,(2,7)_3,(4,7)_3,(84,229)_{189}$\\
    & $(242,647)$ & 3 & $(2,7)_3,(4,7)_3,(72,103)_{29}$\\
    & $(554,647)$ & 12 & $(2,5)_2,(2,7)_3,(4,7)_3,(180,409)_{238},$\\
    &&& $(288,457)_{190}$\\
\hline
653 & $(48,653)$ & 60 & $(8,17)_7,(12,17)_7,(4,31)_2,(14,31)_2,(12,41)_{38},$\\
&&& $(20,41)_{38},(28,41)_{38},(24,61)_{43},(36,61)_{43},$\\
&&& $(196,241)_{171},(48,2609)_{653},(9176,9781)_{653}$\\
\hline
659 & $(224,659)$ & 18 & $(2,7)_1,(4,7)_1,(6,13)_9,(12,19)_{13},(42,127)_{24},$\\
&&& $(882,5923)_{659}$\\
\hline
673 & $(408,673)$ & 1 & $(8,17)_{10}$\\
    & $(502,673)$ & 1 & $(2,5)_3$\\
\hline
677 & $(628,677)$ & 3 & $(4,13)_1,(8,13)_1,(30,79)_{45}$\\
\hline
683 & $(32,683)$ & 12 & $(2,5)_3,(32,67)_{13},(76,89)_{60},(280,373)_{310},$\\
&&& $(2760,4093)_{683}$\\
\hline
691 & $(12,691)$ & 1 & $(12,31)_9$\\
    & $(200,691)$ & 1 & $(2,7)_5$\\
\hline
727 & $(378,727)$ & 1 & $(4,23)_{14}$\\
\hline
751 & $(290,751)$ & 1 & $(40,251)_{249}$\\
\hline
757 & $(514,757)$ & 1 & $(2,5)_2$\\
\hline
761 & $(260,761)$ & 1 & $(20,41)_{23}$\\
\hline
773 & $(732,773)$ & 60 & $(2,7)_3,(4,7)_3,(2,11)_3,(4,11)_3,(4,17)_8,$\\
&&& $(12,17)_8,(16,41)_{35},(120,241)_{50},$\\
&&& $(1118,1931)_{773},(1504,3089)_{773}$\\
\hline
797 & $(220,797)$ & 9 & $(8,37)_{20},(20,37)_{20},(220,2389)_{797},$\\
&&& $(1812,2389)_{797},(2210,3583)_{797}$\\
\hline
809 & $(330,809)$ & 5 & $(2,11)_6,(4,11)_6,(6,11)_6,(8,11)_6,(10,41)_{30}$\\
    & $(628,809)$ & 5 & $(2,11)_6,(4,11)_6,(6,11)_6,(8,11)_6,(20,41)_{30}$\\
\hline
811 & $(544,811)$ & 1 & $(4,19)_{13}$\\
\hline
821 & $(744,821)$ & 1 & $(4,11)_7$\\
\hline
827 & $(102,827)$ & 6 & $(2,13)_8, (4,13)_8, (6,13)_8, (8,13)_8, (10,13)_8,$\\
&&& $(2580,4957)_{827}$\\
\hline
\newpage
839 & $(66,839)$ & 1680 & $(2,11)_3,(4,11)_3,(6,13)_7,(10,13)_7,(8,17)_6,$\\
&&& $(2,29)_{27},(4,29)_{27},(8,29)_{27},(10,29)_{27},$\\
&&& $(12,29)_{27},(14,29)_{27},(16,29)_{27},(18,29)_{27},$ \\
&&& $(20,29)_{27},(22,29)_{27},(10,31)_2,(20,31)_2,$ \\
&&& $(10,41)_{19},(16,41)_{19},(20,41)_{19},(26,41)_{19},$\\
&&& $(28,41)_{19},(30,41)_{19},(38,41)_{19},(28,61)_{46},$ \\
&&& $(48,61)_{46},(56,61)_{46},(28,71)_{58},(56,71)_{58},$ \\
&&& $(4,97)_{63},(52,97)_{63},(76,97)_{63},(26,211)_{206},$ \\
&&& $(66,211)_{206},(6,281)_{277},(80,281)_{277},$\\
&&& $(138,281)_{277},(248,281)_{277},(258,281)_{277},$\\
&&& $(220,673)_{166},(17664,20113)_{839}$\\
\hline
877 & $(868,877)$ & 1 & $(4,13)_6$\\
\hline
881 & $(162,881)$ & 1 & $(2,5)_1$\\
\hline
887 & $(418,887)$ & 3 & $(2,7)_5,(4,7)_5,(2190,2659)_{887}$\\
\hline
929 & $(520,929)$ & 1 & $(8,17)_{11}$\\
    & $(820,929)$ & 1 & $(4,17)_{11}$\\
\hline
953 & $(156,953)$ & 1 & $(20,137)_{131}$\\
\hline
971 & $(166,971)$ & 42 & $(2,7)_5,(4,7)_5,(2,29)_{14},(16,29)_{14},(18,43)_{25},$\\
&&& $(24,43)_{25},(36,43)_{25},(26,71)_{48},(56,71)_{48},$\\
&&& $(6,211)_{127}$\\  
\hline
1061 & $(474,1061)$ & 1 & $(2,5)_1$\\
\hline
1091 & $(888,1091)$ & 8 & $(2,5)_1,(4,17)_3,(8,17)_3,(12,17)_3,(8,41)_{25}$\\
\hline
1117 & $(794,1117)$ & 1 & $(2,5)_2$\\
\hline
1129 & $(348,1129)$ & 5 & $(2,11)_7,(4,11)_7,(6,11)_7,(8,11)_7,(20,41)_{22}$\\
\hline
1151 & $(534,1151)$ & 1 & $(4,11)_7$\\
     & $(784,1151)$ & 1 & $(4,11)_7$\\
     & $(968,1151)$ & 1 & $(8,11)_7$\\
\hline
1153 & $(802,1153)$ & 1 & $(2,5)_3$\\
\hline
1193 & $(262,1193)$ & 1 & $(2,5)_3$\\
\hline
1201 & $(676,1201)$ & 1 & $(4,17)_{11}$\\
\hline
1217 & $(784,1217)$ & 3 & $(4,13)_8,(8,13)_8,(48,97)_{53}$\\
& $(866,1217)$ & 1 & $(2,17)_{10}$\\
& $(1118,1217)$ & 3 & $(2,13)_8,(6,13)_8,(10,13)_8$\\
\hline
1229 & $(784,1229)$ & 28 & $(4,17)_5,(8,17)_5,(12,17)_5,(16,29)_{11},$\\
&&& $(32,113)_{99},(784,8597)_{1229},(2012,8597)_{1229},$\\
&&& $(3240,8597)_{1229},(5696,8597)_{1229},$\\
&&& $(6924,8597)_{1229}$\\
\hline
1237 & $(874,1237)$ & 1 & $(2,5)_2$\\ 
\hline
1279 & $(518,1279)$ & 1 & $(2,7)_5$\\
\hline
1283 & $(510,1283)$ & 10 & $(2,5)_3,(2,11)_7,(4,11)_7,(6,11)_7,(8,11)_7,$\\
&&& $(6920,12821)_{1283}$ \\
\hline
1291 & $(206,1291)$ & 2 & $(2,5)_1,(56,61)_{10}$\\
& $(824,1291)$ & 1 & $(14,31)_{20}$\\
\hline
1297 & $(202,1297)$ & 1 & $(2,5)_2$\\
& $(220,1297)$ & 1 & $(12,17)_5$\\
\hline
1301 & $(176,1301)$ & 1 & $(20,53)_{29}$\\
\hline
1307 & $(382,1307)$ & 3 & $(2,7)_5,(4,7)_5,(2994,3919)_{1307}$\\
& $(852,1307)$ & 3 & $(2,7)_5,(4,7)_5,(852,3919)_{1307}$\\
\hline
1319 & $(304,1319)$ & 360 & $(2,7)_3,(4,7)_3,(8,17)_{10},(6,31)_{17},(12,31)_{17},$\\
&&& $(18,31)_{17},(24,31)_{17},(6,37)_{24},(12,37)_{24},$\\
&&& $(18,37)_{24},(30,73)_5,(36,73)_5,(66,73)_5,$\\
&&& $(2940,11863)_{1319},(42480,52721)_{1319}$\\
\hline
1327 & $(466,1327)$ & 2 & $(2,5)_2,(4,13)_1$\\
\hline
1367 & $(234,1367)$ & 6 & $(2,5)_2,(4,13)_2,(8,13)_2,(234,4099)_{1367}$\\
\hline
1369 & $(32,37)$ & 10 & $(2,11)_5,(6,11)_5,(14,31)_5,(8,41)_{16},(20,61)_{27}$\\
\hline
1381 & $(266,1381)$ & 1 & $(2,5)_1$\\
\hline
1409 & $(358,1409)$ & 1 & $(6,23)_6$\\
\hline
1429 & $(996,1429)$ & 1 & $(44,239)_{234}$\\
\hline
1439 & $(574,1439)$ & 630 & $(4,11)_9,(8,11)_9,(2,19)_{14},(4,19)_{14},(8,19)_{14},$\\
&&& $(10,19)_{14},(12,19)_{14},(2,31)_{13},(6,31)_{13},$\\
&&& $(12,31)_{13},(22,31)_{13},(26,31)_{13},(24,37)_{33},$\\
&&& $(6,43)_{20},(24,43)_{20},(10,61)_{36},(30,61)_{36},$ \\
&&& $(40,61)_{36},(30,71)_{19},(40,71)_{19},(36,127)_{42},$\\
&&& $(16,181)_{172},(50,181)_{172},(140,181)_{172},$\\
&&& $(60,211)_{173},(574,8629)_{1439},(50904,60397)_{1439}$ \\
\hline
1483 & $(224,1483)$ & 2 & $(2,13)_1,(8,13)_1$\\
\hline
1499 & $(94,1499)$ & 24 & $(2,7)_1,(4,7)_1,(6,13)_4,(4,17)_3,(8,17)_3,$\\
&&& $(10,17)_3,(12,17)_3,(14,17)_3,(736,857)_{642}$\\
\hline
\end{longtable}
\end{center}

\newpage

\end{document}